\newcommand{\dx}{\operatorname{dx}}
\newcommand{\vol}{\operatorname{vol}}
\newcommand{\inj}{\operatorname{inj}}
\newcommand{\reals}{\mathbb{R}}
\newcommand{\II}{\operatorname{II}}
\newcommand{\graph}{\operatorname{graph}}
\newcommand{\Hom}{\operatorname{Hom}}
\newcommand{\Rm}{\operatorname{Rm}}
\newcommand{\proj}{\operatorname{proj}}
\newcommand{\Haus}{\mathcal{H}}
\newcommand{\ints}{\mathbb{Z}}
\newtheorem{theorem}{Theorem}[section]
\newtheorem{lemma}[theorem]{Lemma}
\newtheorem{corollary}[theorem]{Corollary}
\newtheorem{proposition}[theorem]{Proposition}
\theoremstyle{definition}
\newtheorem{definition}{Definition}[section]
\newtheorem{remark}{Remark}
\numberwithin{equation}{section}
\begin{document}

\title[Compactness for the Second Fundamental Form]{A Compactness Theorem for the Second Fundamental Form}

\author{Andrew A. Cooper}
\address{Department of Mathematics\\
Michigan State University\\
East Lansing, Michigan 48824}
\email{coope106@math.msu.edu}
\thanks{The author was partially supported by an RTG Research Training in Geometry and Topology NSF grant DMS 0353717 and as a graduate student on NSF grant DMS 0604759.}

\date{}

\begin{abstract}
	In this note we establish several versions of a compactness theorem for submanifolds.  In particular we require only bounds on the second fundamental form and do not assume volume or diameter bounds.  As an application we prove a compactness theorem for mean curvature flows and use it to construct smooth blow-up limits as singularity models.
\end{abstract}

\maketitle
	
\section{Introduction}
	A celebrated theorem of Cheeger and Gromov states that families of Riemannian manifolds with uniform $C^\ell$ bounds on the curvature tensor and a uniform lower bound on the injectivity radius are precompact in a certain sense:
	
	\begin{theorem}[Cheeger-Gromov \cite{gromov81}]\label{cheeger.gromov} Let $(N_k,h_k,x_k)$ be a sequence of complete pointed Riemannian manifolds such that $|\nabla^\ell\Rm (N_k,h_k)|\leq C$ for each $1\leq\ell\leq \ell_0$ and $\inj(h_k)\geq \eta>0$.  Then there is complete $C^{\ell_0+1}$ Riemannian manifold $(N_\infty, h_\infty, x_\infty)$ such that
		\begin{enumerate}
			\item $N_\infty$ admits a sequence of relatively compact open sets $V_1\subset V_2\subset\cdots\subset N_\infty$ which exhausts $N_\infty$ and embeddings $\psi_k:(V_k,x_\infty)\hookrightarrow (N_k,x_k)$, such that for each $R>0$ the $h_k$-metric ball $B(x_k,R)$ is contained in $\psi_k(V_k)$ for all $k\geq k_0(R)$
			\item $\psi_k^*h_k\rightarrow h_\infty$ in the $C^{\ell_0+1,\gamma}$ topology on compact subsets of $N_\infty$, for any $0\leq \gamma<1$
		\end{enumerate}
	\end{theorem}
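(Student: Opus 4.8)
The plan is to follow the standard three-stage strategy for Cheeger--Gromov type compactness: produce uniformly controlled coordinate charts, extract limits of the metric coefficients and of the transition maps by Arzel\`a--Ascoli plus a diagonal argument, and then reassemble a limit manifold together with the maps $\psi_k$. For the first stage I would invoke the regularity theory for harmonic coordinates (Jost--Karcher, Anderson): the curvature bounds together with $\inj(h_k)\ge\eta$ provide, around each point of $N_k$, a harmonic coordinate chart $\varphi_{k,i}\colon B_{\reals^n}(0,r_0)\to N_k$ of a definite radius $r_0=r_0(n,C,\eta)$ on which $\tfrac12\delta_{ab}\le(\varphi_{k,i}^*h_k)_{ab}\le 2\delta_{ab}$ and the $C^{\ell_0+1,\gamma}$ norms of the metric coefficients are bounded by a constant depending only on $n,C,\eta,\gamma$. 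Elliptic bootstrapping applied to the harmonic-coordinate expression for the Ricci tensor is what turns the pointwise bounds on $\nabla^\ell\Rm$ into these $C^{\ell_0+1,\gamma}$ estimates.

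For the second stage I would fix $\epsilon<r_0/10$, choose in each $N_k$ a maximal $\epsilon$-separated set $\{p_{k,i}\}$ with $p_{k,0}=x_k$, ordered by distance to $x_k$, and use Bishop--Gromov volume comparison (valid from the lower Ricci bound, with a matching lower volume bound on small balls coming from the two-sided curvature bound) to see that $\#\{i: p_{k,i}\in B(x_k,R)\}\le N(R)$ and that each chart $B(p_{k,i},r_0)$ meets only boundedly many others, uniformly in $k$. Since within each $B(x_k,R)$ there are only finitely many possible combinatorial overlap patterns on $\le N(R)$ vertices of bounded degree, after passing to a subsequence I may speak of ``the $i$-th chart'' uniformly in $k$. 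On each chart the pulled-back metrics are bounded in $C^{\ell_0+1,\gamma}$ and the transition maps $\varphi_{k,i}^{-1}\circ\varphi_{k,j}$ are bounded in $C^{\ell_0+2,\gamma}$; Arzel\`a--Ascoli on each chart and overlap, followed by a diagonal argument over the countably many charts, yields a subsequence along which $\varphi_{k,i}^*h_k\to g_{\infty,i}$ in $C^{\ell_0+1,\gamma'}$ and $\varphi_{k,i}^{-1}\circ\varphi_{k,j}\to\tau_{ij}$ in $C^{\ell_0+2,\gamma'}$ for each $\gamma'<\gamma$, with the $\tau_{ij}$ satisfying the cocycle condition.

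For the third stage I would glue the balls $B_{\reals^n}(0,r_0/2)$ along the limit transition maps $\tau_{ij}$ to obtain a $C^{\ell_0+1}$ manifold $N_\infty$ carrying the limit metric $h_\infty$ assembled from the $g_{\infty,i}$, with $x_\infty$ the origin of the $0$-th chart; I would set $V_m$ equal to the union of the first $m$ charts, so the $V_m$ are relatively compact, increasing, and exhaust $N_\infty$. To define $\psi_k\colon V_m\to N_k$ I would patch the individual charts $\varphi_{k,i}$ using a fixed partition of unity subordinate to the cover of $N_\infty$ together with Riemannian center-of-mass averaging in $N_k$ (legitimate since the chart images have diameter $\ll\inj(h_k)$ and bounded curvature); on overlaps the candidate maps differ by $C^{\ell_0+1}$-small perturbations of the identity once $k$ is large, so $\psi_k$ is smooth, satisfies $\psi_k^*h_k\to h_\infty$ in $C^{\ell_0+1,\gamma'}$ on compacta, and for large $k$ is a $C^1$-small perturbation of a local isometry --- hence an immersion, and injective because the net is $\epsilon$-separated and $\psi_k$ nearly preserves distances. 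Finally, since every point of the $h_k$-ball $B(x_k,R)$ lies within $\epsilon<r_0/2$ of some net point $p_{k,i}$ with $i<N(R+\epsilon)$, that ball is contained in $\psi_k(V_m)$ as soon as $m\ge N(R+\epsilon)$ and $k\ge k_0(R)$.

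I expect the third stage to be where the real work lies: turning the locally defined near-isometries $\varphi_{k,i}$ into a single globally defined smooth map $\psi_k$, and then proving it is a genuine embedding rather than merely an immersion. The center-of-mass patching must be executed with estimates uniform in $k$ while the number of charts is allowed to grow with $m$, and the injectivity argument must combine the $\epsilon$-separation of the net with the $C^0$-convergence of distances; keeping all constants uniform in $k$ as $m\to\infty$ is precisely what forces the diagonal subsequence and produces the threshold $k_0(R)$ appearing in conclusion (1).
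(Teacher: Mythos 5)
The paper does not prove this theorem---it is imported from \cite{gromov81}, with the underlying chart machinery stated without proof as Proposition \ref{CG.charts} and referred to Petersen's chapter 10---so there is no in-paper proof to compare against. Your three-stage outline (harmonic-coordinate control of the metric coefficients, Bishop--Gromov and combinatorial stabilization followed by Arzel\`{a}--Ascoli with a diagonal argument, then gluing and center-of-mass patching to manufacture $\psi_k$) is the standard correct proof and is exactly the chart-extraction strategy the paper itself generalizes in proving Theorem \ref{geometric.convergence.ambient}, where the Langer charts $U_{r,q}$ on the submanifold play the role your harmonic charts play on the ambient manifold.
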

	
	This theorem has been used extensively in the theory of singularities of the Ricci flow \cite{hamiltoncompactness} and to prove topological finiteness theorems \cite{cheegerthesis}. Our goal is to establish an analogous compactness theorem for Riemannian immersions.
	
	Given an immersion $F:M\looparrowright (N,h)$ of a compact $m$-manifold $M$, we may equip $M$ with a background Riemannian metric and isometrically embed $(N,h)$ into some Euclidean space $\reals^K$.  This allows us to consider the space $C^\ell(M,N)$ of $C^\ell$ maps from $M$ to $N$.  The curvature of the image submanifold $F(M)$ is invariant under reparametrization of $M$; thus bounds on the curvature of $F(M)$ do not allow us to appeal directly to the Arzela-Ascoli theorem for compactness of families of immersions $F:M\looparrowright N$. In fact by composing with a diffeomorphism of $M$, we may make any derivative of $F$ arbitrarily large without changing the extrinsic curvature.  The content of the our main theorem is that this diffeomorphism-invariance can be corrected for in a way that allows us to use Arezela-Ascoli, albeit at the cost of possible topological change.

	\begin{theorem}\label{geometric.convergence.ambient}
			Let $M_k^m$ be smooth closed $m$-manifolds and $(N_k,h_k)$ smooth Riemannian ${m+n}$-manifolds such that $|\nabla^\ell\Rm(N_k,h_k)|\leq C_\ell$, $0\leq\ell\leq\ell_0$, and $\inj(N_k,h_k)\geq \eta>0$. Suppose $F_k:(M_k,p_k)\looparrowright (N_k,h_k,x_k)$ are a sequence of pointed immersions of $M_k$ into $(N_k,h_k)$ such that the second fundamental forms and their covariant derivatives are bounded pointwise, i.e. $|\nabla^\ell\II_k|\leq C_\ell$, $0\leq\ell\leq\ell_0$.   Then there exist a $C^{\ell_0+1}$ $m$-manifold $(M_\infty,p_\infty)$ and a complete Reimannian manifold $(N_\infty,h_\infty,x_\infty)$ such that:
			\begin{enumerate}
				\item $M_\infty$ admits an exhausting sequence $W_1\subset W_2\subset\cdots$ of relatively compact open sets and embeddings $\phi_k:(W_k,p_\infty)\hookrightarrow (M_k,p_k)$, such that for any $R>0$, the $F_k^*h_k$-metric ball $B(p_k,R)$ is contained in $\phi_k(W_k)$ for all $k\geq k_0(R)$
				\item $N_\infty$ admits an exhausting sequence $V_1\subset V_2\subset\cdots$ of relatively compact open sets and embeddings $\psi_k:(V_k,x_\infty)\hookrightarrow (N_k,x_k)$, such that for any $R>0$, the $h_k$-metric ball $B(x_k,R)$ is contained in $\psi_k(V_k)$ for all $k\geq k_0(R)$
				\item $\psi_k^*h_k\rightarrow h_\infty$ on compact sets in the $C^{\ell_0+1,\gamma}$ topology for any $
					0\leq\gamma<1$
				\item $\phi_k(W_k)\subset \psi_k(V_k)$.
				\item $\psi_k^{-1}\circ F_k\circ\phi_k\rightarrow F_\infty$ on compact sets in the $C^{\ell_0+1,\gamma}$ topology for any $0\leq\gamma<1$
				\item $(M_\infty, F_\infty^*h_\infty)$ is a complete Riemannian manifold
			\end{enumerate}
		\end{theorem}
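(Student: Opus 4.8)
The plan is to apply the Cheeger--Gromov theorem (Theorem~\ref{cheeger.gromov}) twice, once on the ambient manifolds $(N_k,h_k)$ and once on the intrinsic geometries $(M_k,g_k)$, where $g_k := F_k^*h_k$, and then to transport the immersions $F_k$ through the resulting coordinate charts and pass to a limit by Arzel\`a--Ascoli. As a harmless normalization I would first replace $x_k$ by $F_k(p_k)$. Applying Theorem~\ref{cheeger.gromov} to $(N_k,h_k,x_k)$ then yields $(N_\infty,h_\infty,x_\infty)$, the exhaustion $V_1\subset V_2\subset\cdots$, and the embeddings $\psi_k\colon V_k\hookrightarrow N_k$ with $\psi_k^*h_k\to h_\infty$ in $C^{\ell_0+1,\gamma}$ on compact sets; this is already conclusions (2) and (3). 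The obstruction to treating $(M_k,g_k)$ the same way is that the hypotheses give no a priori lower bound on $\inj(M_k,g_k)$, and producing one is the crux of the proof.

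Next I would record the intrinsic estimates coming from the Gauss and Codazzi equations. The Gauss equation writes $\Rm(g_k)$ as the restriction of $\Rm(h_k)$ to $M_k$ plus a quadratic expression in $\II_k$; since the second fundamental form measures the failure of $\nabla^{g_k}$ to agree with $\nabla^{h_k}$ along $F_k$, covariantly differentiating this identity yields, for $0\le\ell\le\ell_0$, a bound $|(\nabla^{g_k})^\ell\Rm(g_k)|\le\widetilde C_\ell$ with $\widetilde C_\ell$ depending only on $C_0,\dots,C_\ell$. Likewise, as $F_k$ is an isometric immersion, $\nabla dF_k$ is exactly $\II_k$ (its tangential part vanishes), so all covariant derivatives of $F_k$ of order at most $\ell_0+1$ are bounded in terms of the given constants. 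Equivalently: in $h_k$-normal coordinates at $F_k(q)$, which have uniformly bounded geometry because $|\Rm(h_k)|\le C_0$ and $\inj(h_k)\ge\eta$, the set $F_k(M_k)$ is, near $F_k(q)$, the graph over $dF_k(T_qM_k)$ of a function whose $C^{\ell_0+1,\gamma}$ norm over a ball of a definite radius $\rho=\rho(m,n,\eta,C_0,\dots,C_{\ell_0})$ is bounded independently of $k$ and $q$.

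\textbf{The main obstacle} is the uniform lower bound $\inj(M_k,g_k)\ge\eta'>0$. By Klingenberg's lemma it suffices to bound below the conjugate radius and the length of the shortest closed $g_k$-geodesic. The Gauss equation bounds the sectional curvature of $g_k$ above by a constant depending only on $C_0$, so the conjugate radius of $(M_k,g_k)$ is bounded below by a positive constant $c_1(C_0)$. For the other quantity, let $\sigma$ be a unit-speed closed $g_k$-geodesic; then $F_k\circ\sigma$ is a closed curve in $N_k$ of the same length whose geodesic curvature equals $|\II_k(\sigma',\sigma')|\le C_0$. If that length were short --- say less than $2\eta$ --- the curve would lie inside a single $h_k$-normal ball, on which, after a further universal shrinking, $h_k$ is as close to the Euclidean metric as we wish; but a closed curve of curvature $\lesssim C_0$ in such a ball has length bounded below, since its unit tangent indicatrix is a closed curve on the unit sphere that lies in no open hemisphere and hence has length at least $2\pi$. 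Consequently the shortest closed $g_k$-geodesic has length at least a positive constant $c_2(\eta,C_0)$, and the two estimates combine to give $\eta'$. This is exactly the step in which the extrinsic bound on $\II_k$ is converted into intrinsic injectivity-radius control, and I expect it to be the delicate point.

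With the injectivity-radius bound in hand, Theorem~\ref{cheeger.gromov} applies to $(M_k,g_k,p_k)$ and produces $(M_\infty,g_\infty,p_\infty)$, an exhaustion $W_1\subset W_2\subset\cdots$, and embeddings $\phi_k\colon W_k\hookrightarrow M_k$ with $\phi_k^*g_k\to g_\infty$ in $C^{\ell_0+1,\gamma}$ on compact sets: conclusion (1), and the convergence of metrics needed for (6). Since each $W_k$ is relatively compact and $F_k$ is $1$-Lipschitz from $(M_k,g_k)$, the set $F_k(\phi_k(W_k))$ lies in a bounded $h_k$-ball about $x_k$ for large $k$, hence inside $\psi_k(V_k)$; after passing to a subsequence and shrinking the $W_k$ this holds for all $k$, which is conclusion (4) and makes $\Phi_k := \psi_k^{-1}\circ F_k\circ\phi_k$ well defined. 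In the harmonic coordinate charts underlying the two applications of Theorem~\ref{cheeger.gromov}, the covariant bounds on $dF_k$ and on the two metrics translate into a uniform bound on the $C^{\ell_0+1,\gamma}$ norm of $\Phi_k$, with a uniform $C^0$ bound because $\Phi_k$ sends the centre of a chart near $p_\infty$ to near the centre of a chart at $x_\infty$; Arzel\`a--Ascoli then extracts a subsequence converging in $C^{\ell_0+1,\gamma'}$ for every $\gamma'<\gamma$ to a map $F_\infty$, which is conclusion (5). Finally $F_\infty$ is an immersion, since $g_k=F_k^*h_k$ is uniformly nondegenerate, and passing to the limit in $\Phi_k^*(\psi_k^*h_k)=\phi_k^*g_k$ gives $F_\infty^*h_\infty=g_\infty$, a complete Riemannian metric by the Cheeger--Gromov construction; this is conclusion (6).
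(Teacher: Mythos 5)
Your proposal is essentially correct, but it takes a genuinely different route from the paper. The paper never appeals to Cheeger--Gromov on the domain side at all: instead it builds, by hand, an atlas of \emph{Langer charts} $U_{r,q}$ on $M_k$ of a uniform size $r=r(\alpha,C_0)$ in which $F_k$ is a graph of a function with $|Df|\leq\alpha$ (Lemma~\ref{arralpha} / \ref{arralpha.ambient}), bounds all higher derivatives of those graph functions via Lemmas~\ref{hessianbound} and \ref{higherderivs}, covers metric balls with a controlled number of such charts (Lemma~\ref{balls}), applies Arzel\`a--Ascoli chart by chart, and glues. The lower bound on $\inj(M_k,F_k^*h_k)$ that you treat as an input lemma the paper obtains only afterward, as Corollary~\ref{injectivity} \emph{to} the construction, by observing that a Langer chart of radius $r$ sits inside a single graph neighbourhood. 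Your version inverts this logic: you establish the intrinsic injectivity bound first --- from the Gauss equation for an upper sectional-curvature bound, then Klingenberg's lemma plus a Fenchel-type total-curvature estimate for the shortest closed geodesic --- and then invoke Theorem~\ref{cheeger.gromov} as a black box on $(M_k,g_k,p_k)$ as well as $(N_k,h_k,x_k)$. The payoff of your route is modularity and a clear identification of the conceptual content (extrinsic $\II$-bounds $\Rightarrow$ intrinsic injectivity control); the cost is that the Fenchel step, stated for Euclidean curves, must be transplanted into a small $h_k$-normal ball where the metric is only $C^{1,\gamma}$-close to flat, so one has to track how the $h_k$-geodesic curvature of $F_k\circ\sigma$ and the Euclidean curvature of its preimage under the exponential map differ at scale $r$; this is standard but not free, and you sensibly flag it as the delicate point. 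The paper's route, conversely, is more self-contained and quantitative --- it re-derives what it needs from the graph estimates rather than quoting Cheeger--Gromov on the domain --- and gives the $(r,\alpha)$-parametrization explicitly, which is what makes the later applications (Lemma~\ref{balls}, and the reparametrization argument in Theorem~\ref{regular.homotopy.finite}) go through. One small bookkeeping point worth making explicit in your sketch: the passage from covariant bounds $|\nabla^\ell\II_k|\leq C_\ell$, $\ell\leq\ell_0$, to coordinate bounds $|\partial^{j}\Phi_k|\leq C$, $j\leq\ell_0+2$, consumes $\ell_0$ derivatives of the Christoffel symbols on both sides, which is exactly what the $C^{\ell_0+1,\gamma}$ control from the two Cheeger--Gromov applications provides, so the derivative count closes with nothing to spare.
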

	
	Here the $C^{\ell_0+1,\gamma}$ topology is that given by isometrically embedding $N_\infty$ into some Euclidean space $\reals^K$ and equipping $M_\infty$ with a background metric.
	
	We refer to convergence as in the conclusion of Theorem \ref{geometric.convergence.ambient} as {\em convergence in $C^{\ell_0+1,\gamma}$ in the geometric sense}.  We note that in case $m=0$, $M_k=\{p_k\}$, our theorem recovers the Cheeger-Gromov theorem.
			
	\bigskip
	
	In section \ref{section.finiteness}, we apply Theorem \ref{geometric.convergence.ambient} to prove some finiteness theorems for classes of immersions $F:M\looparrowright N$.  In particular, we establish
		\begin{theorem}\label{regular.homotopy.finite.fixedtargetdomain}
			Let $\mathcal{C}=\{c_i\}$ be a collection of regular homotopy classes of maps $F:M\rightarrow (N,h)$, up to diffeomorphism of $M$.  If $\mathcal{C}$ is infinite, then there is no choice of immersed representatives $F_i\in c_i$ which satisfies $\vol(F_i(M))\leq C_1$, $|\II(F_i(M))|\leq C_2$.
		\end{theorem}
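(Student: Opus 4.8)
The plan is to argue by contradiction, feeding the hypotheses into Theorem~\ref{geometric.convergence.ambient}. Throughout, $M$ and $N$ are fixed, and I read $\vol(F_i(M))$ as the Riemannian volume $\vol(M,F_i^*h)$ of the immersed submanifold, counted with multiplicity; if the bound were instead on the Hausdorff measure of the image set, the $k$-times covered unit circles in $\reals^2$ (of turning number $k$) would already contradict the statement. So suppose $\mathcal C=\{c_i\}$ is infinite and there exist immersed representatives $F_i\in c_i$ with $\vol(M,F_i^*h)\le C_1$ and $|\II(F_i(M))|\le C_2$. We may assume $M$ connected (otherwise argue componentwise, the number of components being fixed), and that $(N,h)$ has bounded geometry, say $|\Rm(N,h)|\le C_0$ and $\inj(N,h)\ge\eta>0$ --- for instance $N$ closed; this is harmless since the usual lower bound on the volume of small balls of $F_i(M)$, together with the volume bound and connectedness, confines every $F_i(M)$ to a fixed bounded region of $N$.

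First I would extract the uniform intrinsic geometry of $(M,F_i^*h)$. The Gauss equation together with the ambient curvature bound gives a two-sided bound $|K_{F_i^*h}|\le K_0=K_0(C_0,C_2)$ on sectional curvatures. The bound $|\II|\le C_2$ also produces a radius $r_0=r_0(C_2,\eta)>0$ and a constant $v_0>0$ with $\vol\bigl(B(q,r)\bigr)\ge v_0 r^m$ for every $F_i^*h$-metric ball of radius $r\le r_0$: at scale $r_0$ each $F_i(M)$ is a graph of bounded gradient over its tangent plane, exactly as in the local analysis underlying Theorem~\ref{geometric.convergence.ambient}. Since $M$ is connected and $\vol(M,F_i^*h)\le C_1$, a maximal $r_0$-separated subset of $(M,F_i^*h)$ has at most $C_1/(v_0 r_0^m)$ points, and chaining the corresponding balls gives a uniform diameter bound $\diam(M,F_i^*h)\le D_0$. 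I expect this non-collapsing estimate --- the statement that no intrinsic direction degenerates --- to be the only real content; everything after it is formal.

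Next I would apply Theorem~\ref{geometric.convergence.ambient} with $\ell_0=0$ to the constant sequences $M_i=M$, $N_i=N$ and the pointed immersions $(F_i,p_i)$ for chosen basepoints $p_i\in M$. Passing to a subsequence, we obtain a $C^1$ manifold $M_\infty$, a limit immersion $F_\infty\colon M_\infty\looparrowright N_\infty$, and embeddings $\phi_i\colon W_i\hookrightarrow M$ as in the conclusion. Since $N_i\equiv N$ and $\psi_i^*h\to h_\infty$ in $C^{1,\gamma}$, I may identify $N_\infty$ with a region of $N$ and arrange that $\psi_i\to\psi_\infty$ in $C^{1,\gamma}$ for a (local-isometry) $C^{1,\gamma}$ map $\psi_\infty$ of that region into $N$. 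Now invoke the diameter bound: taking $R:=D_0+1$, one has $B(p_i,R)=M$ in the $F_i^*h$-metric, so the inclusion $B(p_i,R)\subset\phi_i(W_i)$ (valid for $i$ large) forces $\phi_i$ to be onto, hence a $C^1$-diffeomorphism $W_i\xrightarrow{\ \sim\ }M$. Then $W_i$ is compact; being also open in $M_\infty$, and $M_\infty=\bigcup_i W_i$ being connected, we get $W_i=M_\infty$. Thus $M_\infty$ is compact and $\phi_i\colon M_\infty\to M$ is a diffeomorphism for all large $i$.

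Finally I would conclude. For $i$ large, $F_i\circ\phi_i=\psi_i\circ(\psi_i^{-1}\circ F_i\circ\phi_i)$ is $C^1$-close to the fixed immersion $G:=\psi_\infty\circ F_\infty\colon M_\infty\looparrowright N$, so for $i,j$ both large, $F_i\circ\phi_i$ and $F_j\circ\phi_j$ are $C^1$-close to each other. Since $M_\infty$ is compact, immersions form an open subset of $C^1(M_\infty,N)$, and two sufficiently $C^1$-close immersions $f_0,f_1\colon M_\infty\looparrowright N$ are connected by the regular homotopy $t\mapsto\exp_{f_0}(t\,\xi)$ with $\xi:=\exp_{f_0}^{-1}f_1$ (all stages are immersions because their differentials stay close to $df_0$). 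Precomposing this homotopy, for $f_0=F_i\circ\phi_i$ and $f_1=F_j\circ\phi_j$, with the diffeomorphism $\phi_i^{-1}\colon M\to M_\infty$ gives a regular homotopy from $F_i$ to $F_j\circ(\phi_j\circ\phi_i^{-1})$, and $\phi_j\circ\phi_i^{-1}\in\mathrm{Diff}(M)$ (smooth after a harmless perturbation). Hence $c_i=c_j$ as regular homotopy classes up to diffeomorphism of $M$, contradicting the distinctness of the $c_i$. The crux is the diameter bound of the second paragraph --- equivalently, the compactness of the limit $M_\infty$; past the invocation of Theorem~\ref{geometric.convergence.ambient} the reasoning is soft.
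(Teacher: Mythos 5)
Your proposal is correct and follows the same route as the paper: establish a uniform diameter bound for $(M,F_i^*h)$ via non-collapsing, invoke the $(r,\alpha)$-reparametrization underlying Theorem~\ref{geometric.convergence.ambient} to get $C^2$ control and hence $C^{1,\gamma}$ compactness of the reparametrized immersions, and conclude via openness of regular-homotopy classes in $C^{1,\gamma}$ (Propositions~\ref{convergences} and~\ref{regular.homotopy.open}). The paper phrases the last step as ``a bounded set in $C^2$ is precompact in $C^{1,\gamma}$, and the classes are open, so only finitely many are met,'' whereas you phrase it as the contrapositive (extract a convergent subsequence and show two far-out terms are regular-homotopic up to $\mathrm{Diff}(M)$); these are the same argument, and your explicit verification that $M_\infty$ is compact and $\phi_i$ is a diffeomorphism is exactly the content the paper leaves implicit after reparametrizing on a fixed $M_0$.
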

	
	\bigskip
		
	In section \ref{section.blowup}, by keeping track of the dependence of the maps $\psi_k,\phi_k$ and the neighbourhoods $W_k, V_k$, we prove a compactness theorem for mean curvature flows in the $C^{\ell,\alpha}$ geometric sense, and use it to construct smooth singularity models for finite-time singularities of the mean curvature flow. This project was discussed by Chen and He \cite{chen&he08}.
		
		\begin{theorem}\label{smoothblowup}
			Let $F:M^m\times[0,T)\rightarrow (N^{m+n},h)$ be a mean curvature flow of compact submanifolds in a Riemannian manifold with bounded geometry.  Suppose $T<\infty$ is the first singular time.  Then there exists a smooth mean curvature flow $F_\infty:M^m\times(-\infty,0)\rightarrow (\reals^{m+n},dx^2)$ which models the singularity of $F$ at time $T$.
		\end{theorem}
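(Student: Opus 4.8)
The plan is to combine Hamilton-style point selection with a parametrized-in-time version of Theorem \ref{geometric.convergence.ambient}. I would first record the standard dichotomy: since $T<\infty$ is the first singular time and $(N,h)$ has bounded geometry, the second fundamental form must blow up, $\sup_{M\times[0,T)}|\II|=\infty$. (Otherwise the interior estimates for the flow bound every $|\nabla^\ell\II|$ on $M\times[0,T)$, the immersions and induced metrics converge in $C^\infty$ as $t\to T$, and short-time existence restarts the flow past $T$, contradicting that $T$ is singular.) Then I would run Hamilton's point selection: there are $(p_k,t_k)\in M\times[0,T)$ with $Q_k:=|\II(p_k,t_k)|\to\infty$ and $|\II(x,t)|\le Q_k$ for all $x\in M$, $t\in[0,t_k]$ --- for instance take $(p_k,t_k)$ realizing $\sup_{M\times[0,T-1/k]}|\II|$; since $|\II|$ is bounded on $M\times[0,t']$ for each $t'<T$, this forces $t_k\to T$. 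Now parabolically rescale about $(p_k,t_k)$: put $M_k:=M$, $h_k:=Q_k^2 h$, $x_k:=F(p_k,t_k)$, and $\tilde F_k(\cdot,s):=F(\cdot,\,t_k+Q_k^{-2}s)$, which is a mean curvature flow in $(N,h_k)$ for $s\in[-Q_k^2t_k,\,Q_k^2(T-t_k))$. Since $t_k\to T>0$, the intervals $[-Q_k^2t_k,0]$ exhaust $(-\infty,0]$.

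Next I would check that the data $(M_k,h_k,\tilde F_k(\cdot,s))$ satisfy the hypotheses of the compactness theorem, uniformly for $s$ in compact subintervals. The ambient data rescale as $|\nabla^\ell\Rm(h_k)|_{h_k}=Q_k^{-\ell-2}|\nabla^\ell\Rm(h)|_h\to 0$ and $\inj(h_k)=Q_k\inj(h)\to\infty$, so the ambient manifolds converge in $C^\infty_{\mathrm{loc}}$ to the complete flat space $(\reals^{m+n},\dx^2)$ (infinite injectivity radius rules out a flat quotient). For the immersions, $|\II_{\tilde F_k}(\cdot,s)|_{h_k}=Q_k^{-1}|\II(\cdot,t_k+Q_k^{-2}s)|\le 1$ for $s\le 0$, and --- this is the essential input --- the interior estimates for mean curvature flow in bounded ambient geometry (the parabolic counterpart of Shi's estimates) upgrade this $C^0$ bound on $\II$ over the slab $M\times[-A-1,0]$ to bounds $|\nabla^\ell\II_{\tilde F_k}(\cdot,s)|\le C_\ell(A)$ on $M\times[-A,0]$, uniform in $k$, for every $\ell$ and every $A>0$; since $\partial_s\tilde F_k=\vec H_{\tilde F_k}$, the time derivatives are then controlled as well.

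I would then apply the mean curvature flow compactness theorem of this section, which is obtained from Theorem \ref{geometric.convergence.ambient} by choosing the reparametrizing diffeomorphisms $\phi_k$ and neighbourhoods $W_k$ from the induced metric $\tilde F_k^*h_k(\cdot,0)$ at the single time $s=0$ and then propagating control to all $s$ in a compact interval via the uniform parabolic estimates together with $|\partial_s(\tilde F_k^*h_k)|\le C$ (which keeps the time slices uniformly equivalent over bounded time). Diagonalizing over an exhaustion $\{M\times[-A_j,0]\}$ of $M\times(-\infty,0)$, over the spatial exhaustions $W_k,V_k$, and over $\ell_0\to\infty$, one extracts a $C^\infty$ $m$-manifold $M_\infty$, embeddings $\phi_k\colon W_k\hookrightarrow M$ and $\psi_k\colon V_k\hookrightarrow N$ with $\psi_k^*h_k\to\dx^2$, and a map $F_\infty\colon M_\infty\times(-\infty,0)\to(\reals^{m+n},\dx^2)$ with $\psi_k^{-1}\circ\tilde F_k\circ\phi_k\to F_\infty$ in $C^\infty_{\mathrm{loc}}$. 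Since the mean curvature flow equation is local and of second order with coefficients depending smoothly on the immersion and the ambient metric --- all convergent --- $F_\infty$ is a smooth mean curvature flow, complete at each time slice by conclusion (6) of Theorem \ref{geometric.convergence.ambient}. It is nontrivial, hence a genuine singularity model, because at the image $p_\infty$ of the $p_k$ one has $|\II_{F_\infty}(p_\infty,0)|=\lim_k Q_k^{-1}|\II(p_k,t_k)|=1$. (With the usual abuse one writes $M_\infty=M^m$; note $M_\infty$ need not be diffeomorphic to $M$ --- the topological change already permitted by Theorem \ref{geometric.convergence.ambient}.)

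The point selection and the interior estimates are standard; the real work, and the main obstacle, is the parametrized compactness theorem --- arranging that $\phi_k$ and $\psi_k$ can be chosen independent of $s$, so that $\psi_k^{-1}\circ\tilde F_k\circ\phi_k$ is an honest smooth family in $s$ converging in $C^\infty$ on compact subsets of space-time. This is exactly where one must reopen the proof of Theorem \ref{geometric.convergence.ambient} and keep careful track of how $W_k,V_k,\phi_k,\psi_k$ depend on the data, constructing them from the geometry of a single time slice and carrying the control along the flow. I expect this bookkeeping, rather than any new geometric ingredient, to constitute the bulk of the argument.
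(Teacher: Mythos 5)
Your proposal is correct and follows essentially the same route as the paper: rescale parabolically about a point-selected sequence where the curvature is nearly maximal, verify that the ambient geometry trivializes to $(\reals^{m+n},\dx^2)$ under rescaling, invoke the mean curvature flow compactness theorem (Theorem \ref{geometric.convergence.MCF}) to extract a limit flow, and check nontriviality via $|\II_\infty(p_\infty,0)|=1$. You also correctly identify the crux: upgrading Theorem \ref{geometric.convergence.ambient} so that the reparametrizing maps $\phi_k,\psi_k$ can be chosen independent of the time parameter, which the paper handles in the proof of Theorem \ref{geometric.convergence.MCF} by observing that the Langer/Cheeger--Gromov charts depend only on the curvature and injectivity bounds, which are uniform in $s$.

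The one genuine divergence is in the point selection. You use the single ``type I'' style selection $(p_k,t_k)$ realizing $\sup_{M\times[0,T-1/k]}|\II|$, which is perfectly adequate for the statement as written (a limit flow on $(-\infty,0)$). The paper's Theorem \ref{SBU}, however, splits into type I and type II singularities and, in the type II case, uses Hamilton's more delicate selection maximizing $(\tilde t_j-t)|\II(p,t)|^2$; the payoff is that the rescaled curvature bound $|\II_j|^2\le \frac{(\tilde t_j-t_j)Q_j^2}{(\tilde t_j-t_j)Q_j^2-t}$ extends to an interval exhausting all of $\reals$, so the type II blow-up is an \emph{eternal} flow on $(-\infty,\infty)$, not merely ancient. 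Your argument does not recover this, but it is not needed for Theorem \ref{smoothblowup} itself. A minor further difference: the paper recenters the rescales at the fixed point $\overline p=\lim p_j$ rather than at the moving $p_j$, which then requires a short separate argument (in the proof of Theorem \ref{SBU}) that the unit-curvature condition survives to the limit at $p_\infty$; your centering at $p_k$ sidesteps this, at the cost of the base point moving in $M$.
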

		
	The sense in which $F_\infty$ models the singularity will be discussed in detail in section \ref{section.blowup}.  We note  that singularities of the mean curvature flow have been studied extensively using the {\em tangent flow}, which is a non-smooth mean curvature flow.  We will discuss how our {\em smooth blow-up} relates to the tangent flow.

\section{The Construction}
The idea of the proof of the Theorem \ref{geometric.convergence.ambient} is due essentially to Langer \cite{langer85}.  We will go over the construction in detail for the case when the ambient manifold is Euclidean, and then indicate how the construction can be extended to an arbitrary Riemannian manifold with bounded geometry.
	
	\subsubsection{Euclidean Case}\label{section.euclidean.construction}
	
	We begin by considering the case of the graph of a map $f:\reals^m\rightarrow\reals^n$, as in \cite{reilly73}.  We need to compare the standard square-norm of certain objects, e.g. $\left|D^2f\right|^2=\sum_{\substack{1\leq \alpha\leq n\\1\leq i,j\leq m}}\left(\frac{\partial^2f_\alpha}{\partial x_i\partial x_j}\right)^2$, with the norms of the tensors $\II$ and $\nabla\II$ in the metric $g$ induced by the immersion. To keep the norms straight, in this section we use $|\cdot|$ for the standard square-norm and $|\cdot|_g$ for the norm in $g$:\begin{equation}\begin{aligned}
	\left|\II\right|_g^2=&h_{ij\alpha}h_{kl\beta}g^{\alpha\beta}g^{ik}g^{jl}\\
	\left|\nabla\II\right|^2_g=&\nabla_ih_{jk\alpha}\nabla_ph_{qr\beta}g^{ip}g^{jq}g^{kr}g^{\alpha\beta}\end{aligned}\end{equation}

	\begin{lemma}\label{hessianbound}Let $f:D_r^m\rightarrow \reals^n$ be a $C^2$ function on the disc of radius $r$.  Then\begin{align*}
		\left|D^2 f\right|^2\leq (1+|Df|^2)^3\left|\II\right|_g^2\end{align*}where $\II$ is the second fundamental form of the graph of $f$.\end{lemma}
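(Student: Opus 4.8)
The plan is to compute everything explicitly in the graph coordinates and compare the standard Hessian $D^2 f$ with the induced second fundamental form $\II$. Write $F(x) = (x, f(x))$ for the graph immersion, so the tangent vectors are $\partial_i F = (e_i, \partial_i f)$ and the induced metric is $g_{ij} = \delta_{ij} + \partial_i f \cdot \partial_j f$. The second fundamental form, as a vector-valued form, is the normal projection of $\partial_i \partial_j F = (0, \partial_i \partial_j f)$; equivalently, if $\nu_1,\dots,\nu_n$ is an orthonormal frame for the normal bundle, then $h_{ij\alpha} = \langle (0, \partial_i\partial_j f), \nu_\alpha\rangle$. The first step is therefore to record these formulas and to express $\left|\II\right|_g^2 = g^{ik}g^{jl}\langle \II_{ij}, \II_{kl}\rangle$ purely in terms of the vector $v_{ij} := (0,\partial_i\partial_j f) \in \reals^{m+n}$ and the tangent space, using that $\langle \II_{ij},\II_{kl}\rangle = \langle P^\perp v_{ij}, P^\perp v_{kl}\rangle$ where $P^\perp$ is orthogonal projection onto the normal space.

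The second step is to bound $|D^2 f|^2$ from above. Note $|D^2 f|^2 = \sum_{i,j} |\partial_i\partial_j f|^2 = \sum_{i,j}|v_{ij}|^2$ (the ambient norm of $v_{ij}$ equals the $\reals^n$-norm of $\partial_i\partial_j f$ since the first $m$ components vanish). We want to replace $|v_{ij}|^2$ by $|P^\perp v_{ij}|^2$ at the cost of the factor $(1+|Df|^2)$ raised to some power, and also to pass from the Euclidean contraction $\sum_{i,j}$ to the $g$-contraction $g^{ik}g^{jl}$, which costs more powers of $(1+|Df|^2)$ since $g^{-1} \geq (1+|Df|^2)^{-1}\Id$ as a quadratic form (the eigenvalues of $g$ lie between $1$ and $1+|Df|^2$). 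The plan is: (a) show $|P^{\top} v_{ij}|$ — the tangential part — can be controlled, or better, avoid this by observing that what we really need is a pointwise estimate $|v_{ij}|^2 \leq (1+|Df|^2)\,|P^\perp v_{ij}|^2$ is \emph{false} in general (the tangential part need not be small), so instead I would argue more carefully. In fact $v_{ij}$ decomposes as tangential plus normal, and the tangential part $P^\top v_{ij} = g^{kl}\langle v_{ij}, \partial_k F\rangle \partial_l F = g^{kl}(\partial_k f \cdot \partial_i\partial_j f)\,\partial_l F$; this is a second-derivative quantity, so it is \emph{not} bounded by the first-derivative data alone. Hence the honest route is to bound $|D^2 f|^2$ directly by $g$-contracted normal quantities: one has $\partial_i\partial_j f = $ (second component of $v_{ij}$), and $v_{ij} = P^\top v_{ij} + \II_{ij}$; writing $\Gamma^k_{ij}$ for the Christoffel symbols of $g$, one gets $P^\top v_{ij} = \Gamma^k_{ij}\partial_k F$, and then $\partial_i\partial_j f = \Gamma^k_{ij}\partial_k f + (\text{second component of }\II_{ij})$. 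Taking norms and using $|\Gamma^k_{ij}\partial_k f| \leq |\Gamma^k_{ij}|\,|Df|$ one is forced to also control $|\Gamma|$ — so this is circular unless one is cleverer.

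The clean argument, which I expect is the intended one, is to compare quadratic forms rather than track Christoffel symbols. The key observation: for any vector $w\in\reals^{m+n}$, $|P^\perp w|^2 \geq |w_{n}|^2 - (\text{correction})$ where $w_n$ is the last-$n$-component; but more usefully, the normal space is the graph of a linear map from $\reals^n$ to $\reals^m$ of norm $\leq |Df|$, so a unit normal vector $\nu$ has its last-$n$ component of length $\geq (1+|Df|^2)^{-1/2}$. Consequently, for $w = (0, \xi)$ with $\xi\in\reals^n$, projecting onto the normal space: $|P^\perp w|^2 \geq (1+|Df|^2)^{-1}|\xi|^2$ — this is the crucial inequality, proved by choosing a normal frame adapted so that $\nu_\alpha = (1+|Df|^2)^{-1/2}(-(Df)^T e_\alpha', e_\alpha')$-type vectors, or more invariantly by noting $\langle w, \nu\rangle = \langle \xi, \nu^{(2)}\rangle$ and $|\nu^{(2)}|\geq(1+|Df|^2)^{-1/2}$. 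Applying this with $\xi = \partial_i\partial_j f$ gives $|\partial_i\partial_j f|^2 \leq (1+|Df|^2)\,|\II_{ij}|^2$ where $|\II_{ij}|^2 = \sum_\alpha h_{ij\alpha}^2$. Summing over $i,j$: $|D^2 f|^2 \leq (1+|Df|^2)\sum_{i,j}|\II_{ij}|^2 = (1+|Df|^2)\,\delta^{ik}\delta^{jl}\langle\II_{ij},\II_{kl}\rangle$. Finally convert $\delta$-contraction to $g$-contraction: since $g_{ij}\leq (1+|Df|^2)\delta_{ij}$ we have $\delta^{ik}\leq (1+|Df|^2)g^{ik}$, so $\delta^{ik}\delta^{jl}\langle\II_{ij},\II_{kl}\rangle \leq (1+|Df|^2)^2 g^{ik}g^{jl}\langle\II_{ij},\II_{kl}\rangle = (1+|Df|^2)^2|\II|_g^2$. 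Combining yields $|D^2 f|^2 \leq (1+|Df|^2)^3|\II|_g^2$, as claimed. The main obstacle is establishing the crucial projection inequality $|P^\perp(0,\xi)|^2 \geq (1+|Df|^2)^{-1}|\xi|^2$ cleanly — everything else is bookkeeping with the eigenvalue bounds $1\leq \operatorname{eig}(g) \leq 1+|Df|^2$; I would prove it by exhibiting the normal space explicitly as $\{(-(Df)\eta, \eta): \eta\in\reals^n\}$ (one checks this is orthogonal to every $\partial_i F$) and computing the projection of $(0,\xi)$ onto it directly, obtaining $P^\perp(0,\xi) = (-(Df)(I+(Df)^T(Df))^{-1}\xi,\ (I+(Df)^T(Df))^{-1}\xi)$, whose squared norm is $\xi^T(I+(Df)^T Df)^{-1}\xi \geq (1+|Df|^2)^{-1}|\xi|^2$.
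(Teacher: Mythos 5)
Your final argument is correct and is essentially the paper's proof: the paper works in the non-orthonormal normal frame $\nu_\alpha=(-Df_\alpha,e_\alpha)$, in which $h_{ij\alpha}=\partial_i\partial_j f_\alpha$ exactly, and the factor of $(1+|Df|^2)$ that you obtain via the projection inequality $|P^\perp(0,\xi)|^2\geq(1+|Df|^2)^{-1}|\xi|^2$ is the same factor the paper extracts from the eigenvalue bound on the normal-bundle metric $g_{\alpha\beta}=\delta_{\alpha\beta}+Df_\alpha\cdot Df_\beta$, which is precisely your Gram matrix $I+(Df)(Df)^T$. The remaining two factors of $(1+|Df|^2)$ coming from the tangential contraction $g^{ik}g^{jl}$ appear identically in both arguments.
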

	\begin{proof}
		The graph of $f$ has immersion map $F(x_1,\ldots,x_m)=(x_1,\ldots,x_m,f_1,\ldots,f_n)$. We use the following tangent and normal frames, where $1\leq i\leq m$ and $1\leq \alpha\leq n$:\begin{equation}\begin{aligned}
			e_i=&(0,\ldots,0,1,0,\ldots,0,\frac{\partial f_1}{\partial x_i},\ldots,\frac{\partial f_n}{\partial x_i})=(0,\ldots,0,1,0,\ldots,0,D_i f)\\
			\nu_\alpha=&(-\frac{\partial f_\alpha}{\partial x_1},\ldots,-\frac{\partial  f_\alpha}{\partial x_m},0,\ldots,0,1,0,\ldots,0)=(-D f_\alpha,0,\ldots,0,1,0,\ldots,0)\end{aligned}\end{equation}
			
		These choices induce the metric on the tangent bundle of the graph, which we denote by $g$ with Latin indices:
			\begin{equation}
			g_{ij}=e_i\cdot e_j=\delta_{ij}+D_i f\cdot D_j f\end{equation}
			
		We also get a metric on the normal bundle, which we denote by $g$ with Greek indices:\begin{equation}
			g_{\alpha\beta}=\nu_\alpha\cdot\nu_\beta=\delta_{\alpha\beta}+D f_\alpha\cdot D f_\beta\end{equation}
		We will use $g^{ij}$ to denote the inverse matrix to $g_{ij}$ and $g^{\alpha\beta}$ to denote the inverse to $g_{\alpha\beta}$.
		We compute the second fundamental form. Note that $D^2F=(0,D^2 f)$. So we have\begin{equation}\begin{aligned}
			\II(e_i,e_j)=&\proj^\perp(D^2F(e_i,e_j))\\
			=&(D_{ij}^2F\cdot\nu_\beta)g^{\alpha\beta}\nu_\alpha\\
			=&\frac{\partial^2 f_\beta}{\partial x_i\partial x_j}g^{\alpha\beta}\nu_\beta\end{aligned}\end{equation}
			
		In components, $h_{ij\alpha}=\frac{\partial^2 f_\alpha}{\partial x_i\partial x_j}$.
	
		Then the norm-squared of the second fundamental form is\begin{equation}\begin{aligned}
			\left|\II\right|_g^2=&\frac{\partial^2 f_\alpha}{\partial x_i\partial x_j}\frac{\partial^2  f_\beta}{\partial x_k\partial x_l}g^{\alpha\beta}g^{ik}g^{jl}\end{aligned}.\end{equation}
		We can think of $\left|\II\right|_g^2$ as the norm-squared of $D^2 f$ in the metric $g$ as opposed to the standard metric.  We will compare $g^{\alpha\beta}$ and $g^{ij}$ to the standard metric by giving estimates for the eigenvalues of $g^{\alpha\beta}$ and $g^{ij}$. To do this we estimate the eigenvalues of $g_{ij}$ and $g_{\alpha\beta}$.
	
		Since $g_{\alpha\beta}=\delta_{\alpha\beta}+Df_\alpha\cdot Df_\beta$, we have that each eigenvalue  $\lambda$ of $g_{\alpha\beta}$ has
			\begin{equation}
				1\leq \lambda\leq 1+|Df|^2
			\end{equation}
		and similiarly the eigenvalues $\mu$ of $g_{ij}$ are bounded by 
			\begin{equation}
				1\leq \mu\leq 1+|D f|^2.
			\end{equation}
	
		Thus the eigenvalues of the inverse matrices $g^{\alpha\beta}$ and $g^{ij}$ are bounded away from zero and infinity:\begin{equation}\label{inveigen}\begin{aligned}
			1 \geq \lambda^{-1}\geq& \frac{1}{1+|D f|^2}\\
			1 \geq \mu^{-1}\geq& \frac{1}{1+|D f|^2}\end{aligned}\end{equation}
			
		So we can estimate\begin{equation}\begin{aligned}
			\left|\II\right|_g^2=&\frac{\partial^2 f_\alpha}{\partial x_i\partial x_j}\frac{\partial^2  f_\beta}{\partial x_k\partial x_l}g^{\alpha\beta}g^{ik}g^{jl}\\
			\ \geq & \  \sum_{\substack{1\leq\alpha\leq n\\1\leq i,j\leq m}}\left(\frac{\partial^2 f_\alpha}{\partial x_i\partial x_j}\right)^2\frac{1}{(1+|D f|^2)(1+|D f|^2)^2}\\
			= & \left|D^2 f\right|^2\frac{1}{(1+|D f|^2)^3}\end{aligned}\end{equation}
		which establishes our lemma.
			
	\end{proof}
	
	We may similarly bound the higher derivatives of $ f$ in terms of $Df$ and the covariant derivatives of $\II$:
	
	\begin{lemma}\label{higherderivs}For any $\ell\geq 2$, we can bound $\left|D^\ell f\right|$ in terms of $|D f|$, $\left|D^2 f\right|$, . . ., $|D^{\ell-1} f|$, $\left|\nabla^{\ell-2}\II\right|_g$, and absolute constants depending on $m$, $n$, and $\ell$.  In particular, the $\ell=3$ case is\begin{equation*}
		\left|D^3 f\right|\leq (1+|D f|^2)^2\left|\nabla\II\right|_g+2\sqrt{2m+4\sqrt{mn}+n}\left|D^2 f\right|^2|D f|\end{equation*}\end{lemma}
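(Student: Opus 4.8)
The plan is to express each ordinary derivative $D^\ell f$ in terms of the \emph{covariant} derivatives of $\II$ by unwinding the definition of the induced connections, and then to absorb the lower-order terms that appear. Since $h_{ij\alpha}=\partial^2_{ij}f_\alpha$, iterating the formula for the covariant derivative of a tensor gives
\[
\nabla_{k_1}\cdots\nabla_{k_{\ell-2}}h_{ij\alpha}=\partial^{\ell-2}_{k_1\cdots k_{\ell-2}}h_{ij\alpha}+R_{k_1\cdots k_{\ell-2}ij\alpha},
\]
whose leading term is, after relabeling indices, exactly $D^\ell f$, and where $R$ is a finite sum of contractions of the Christoffel symbols $\Gamma^p_{qr}$ of $g$ and of the normal connection coefficients --- together with their ordinary partials of order $\le\ell-3$ --- against ordinary derivatives of $\II$ of order $\le\ell-3$. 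The structural point is that each covariant derivative contributes exactly one $\partial$, so the only term applying all $\ell-2$ of them directly to the tensor slot is the leading one; in every other term at least one derivative is spent producing a $\Gamma$, and consequently the highest-order derivative of $f$ occurring in $R$ has order $\ell-1$ (a $\Gamma$ differentiated $\ell-3$ times contains $\partial^{\ell-2}g\sim D^{\ell-1}f\cdot Df+\cdots$, and $\partial^{\ell-3}\II=D^{\ell-1}f$).

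Next one controls the connection terms. From $g_{ij}=\delta_{ij}+D_if\cdot D_jf$ a short computation gives $\Gamma^p_{ij}=g^{pq}\bigl(D_qf\cdot\partial^2_{ij}f\bigr)$, and the normal connection coefficients have the analogous shape $g^{\gamma\delta}\bigl(\sum_l h_{kl\alpha}\,\partial_lf_\delta\bigr)$; in particular both are of the schematic form $g^{-1}\ast Df\ast\II$. By \eqref{inveigen}, $g^{ij}$ and $g^{\alpha\beta}$ have operator norm $\le 1$ (hence Frobenius norms $\le\sqrt m$ and $\le\sqrt n$); differentiating through $\partial(g^{-1})=-g^{-1}(\partial g)g^{-1}$ and inducting, every partial derivative of $\Gamma$ and of the normal coefficients of order $\le\ell-3$ is bounded by a universal polynomial in $|Df|,\ldots,|D^{\ell-1}f|$ with constants depending only on $m,n,\ell$. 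Feeding this into $R$ gives $|R|\le P\bigl(|Df|,\ldots,|D^{\ell-1}f|\bigr)$ for such a polynomial $P$. Finally, $\nabla^{\ell-2}\II$ has $\ell$ tangential indices and one normal index, so contracting with $g^{-1}$'s exactly as in the proof of Lemma \ref{hessianbound} and invoking \eqref{inveigen} yields $\bigl|\nabla^{\ell-2}\II\bigr|^2\le(1+|Df|^2)^{\ell+1}\bigl|\nabla^{\ell-2}\II\bigr|_g^2$, the norm on the left being the standard square-norm of the components. Combining the three steps,
\[
\bigl|D^\ell f\bigr|\le\bigl|\nabla^{\ell-2}\II\bigr|+|R|\le(1+|Df|^2)^{(\ell+1)/2}\bigl|\nabla^{\ell-2}\II\bigr|_g+P\bigl(|Df|,\ldots,|D^{\ell-1}f|\bigr),
\]
which is the assertion. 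For $\ell=3$ the exponent $(\ell+1)/2$ is $2$, and $R$ consists of exactly three summands (one per index of $\II$), each a contraction of a Christoffel or normal-connection symbol against $\II$, i.e.\ of the form $g^{-1}\,Df\,(D^2f)^2$; an elementary Cauchy--Schwarz estimate that keeps track of the sums over the $m$ tangential and $n$ normal indices then produces an admissible constant, of which $2\sqrt{2m+4\sqrt{mn}+n}$ is the one recorded above.

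The main obstacle is precisely the bookkeeping of the first paragraph: one must check that no $D^\ell f$ is concealed among the lower-order terms, and --- more subtly --- that the recursion for $\partial^{\ell-2}\II$ closes using only the \emph{top} covariant derivative $\nabla^{\ell-2}\II$ together with ordinary derivatives of $f$ of order $\le\ell-1$, rather than also requiring the intermediate covariant derivatives $\nabla^j\II$ for $j<\ell-2$. Once the schematic identity $\nabla^{\ell-2}\II=D^\ell f+(\text{universal polynomial in }D^{\le\ell-1}f)$ is in hand, the remaining estimates are routine and the explicit $\ell=3$ computation is short.
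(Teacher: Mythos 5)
Your argument is correct and follows essentially the same route as the paper's: write the components of $\nabla^{\ell-2}\II$ as $D^\ell f$ plus a remainder built from Christoffel-symbol contractions, bound the remainder by $|Df|,\ldots,|D^{\ell-1}f|$, and compare the Frobenius norm to the $g$-norm using the eigenvalue estimates \eqref{inveigen}. The one small difference is the final extraction step: your triangle inequality $|D^\ell f|\le|\nabla^{\ell-2}\II|+|R|$ is a touch cleaner than the paper's Cauchy--Schwarz-plus-quadratic-formula manoeuvre and for $\ell=3$ actually yields coefficient $\sqrt{2m+4\sqrt{mn}+n}$ rather than $2\sqrt{2m+4\sqrt{mn}+n}$ on the $|D^2f|^2|Df|$ term, so it certainly gives the stated inequality.
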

	\begin{proof}
		We will do the $\ell=3$ computation explicitly; the others are similar but more tedious.
		
		As in the proof of Lemma \ref{hessianbound}, we start by estimating $\left|\nabla\II\right|_g$ below.  To do this, we need to compute the Christoffel symbols for the tangent and normal bundles.  First we compute the tangential Christoffel symbols.  We compute $\nabla_{e_i}e_j$, the projection to the tangent space of $D_{e_i}e_j$:
		\begin{equation}\begin{aligned}
			\nabla_{e_i}e_j=&\proj^T(D_{e_i}e_j)\\
			=&\proj^T(\frac{\partial}{\partial x_i}(0,\ldots, 0,1,0,\ldots,0,D_j f))\\
			=&\proj^T((0,D^2_{ij} f))\\
			=&g^{kl}((0,D^2_{ij} f)\cdot e_l) e_k\\
			=&g^{kl}(D^2_{ij} f\cdot D_l f) e_k\end{aligned}\end{equation}
		so $\Gamma_{ij}^k=g^{kl}(D^2_{ij} f\cdot D_l f)$.  Similarly to compute $\Gamma_{i\alpha}^\beta$:\begin{equation}\begin{aligned}
			\nabla_{e_i}\nu_\alpha = &\proj^\perp(D_{e_i}\nu_\alpha)\\
				=&\proj^\perp(\frac{\partial}{\partial x_i}(-D f_\alpha,0,\ldots,0,1,0,\ldots,0))\\
				=&\proj^\perp(-\frac{\partial^2 f_\alpha}{\partial x_i\partial x_1},\ldots,-\frac{\partial^2 f_\alpha}{\partial x_i\partial x_m},0)\\
				=&g^{\beta\gamma}((-\frac{\partial^2 f_\alpha}{\partial x_i\partial x_1},\ldots,-\frac{\partial^2 f_\alpha}{\partial x_i\partial x_m},0)\cdot \nu_\gamma)\nu_\beta\\
				=&g^{\beta\gamma}(\sum_r\frac{\partial^2 f_\alpha}{\partial x_i\partial x_r}\frac{\partial f_\gamma}{\partial x_r})\\
				=&g^{\beta\gamma}(D^2_{i\cdot} f_\alpha\cdot D f_\gamma)\nu_\beta\end{aligned}\end{equation}
		so $\Gamma_{i\alpha}^\beta=g^{\beta\gamma}(D^2_{i\cdot} f_\alpha\cdot D f_\gamma)$.
	
		Then $\left|\nabla\II\right|_g^2$ is given by\begin{equation}\label{nablaII}\begin{aligned}
			\left|\nabla\II\right|_g^2=&(\frac{\partial h_{jk\alpha}}{\partial x_i} +h_{lk\alpha}\Gamma_{ij}^l + h_{jl\alpha}\Gamma_{ik}^l + h_{jk\beta}\Gamma_{i\alpha}^\beta)\\
			&\cdot(\frac{\partial h_{qr\gamma}}{\partial x_p} +h_{lr\gamma}\Gamma_{pq}^l + h_{ql\gamma}\Gamma_{pr}^l + h_{qr\delta}\Gamma_{p\gamma}^\delta)g^{ip}g^{jq}g^{kr}g^{\alpha\gamma}\\
			\geq&\frac{1}{(1+|D f|^2)^4}(\frac{\partial}{\partial x_i} h_{jk\alpha}+h_{lk\alpha}\Gamma_{ij}^l + h_{jl\alpha}\Gamma_{ik}^l + h_{jk\beta}\Gamma_{i\alpha}^\beta)^2\end{aligned}\end{equation}
		By (\ref{nablaII}) and Cauchy-Schwarz, we have\begin{equation}
			(1+|D f|^2)^4\left|\nabla\II\right|_g^2+2\left|D^3 f\right||B|\geq \left|D^3 f\right|^2\end{equation}
		where $B_{ijk\alpha}=h_{lk\alpha}\Gamma_{ij}^l+h_{jl\alpha}\Gamma_{ik}^l + h_{jk\beta}\Gamma_{i\alpha}^\beta$. It will suffice to bound $|B|$ above. Our estimates (\ref{inveigen}) for the eigenvalues of $g^{ij}$ and $g^{\alpha\beta}$ imply $\left|g^{ij}\right|^2\leq m$ and $\left|g^{\alpha\beta}\right|^2\leq n$.
			\begin{equation}\begin{aligned}
			|B|^2=&\sum_{\substack{1\leq\alpha\leq n\\1\leq i,j,k\leq m}}(h_{lk\alpha}\Gamma_{ij}^l+h_{jl\alpha}\Gamma_{ik}^l + h_{jk\beta}\Gamma_{i\alpha}^\beta)^2\\
			=&\sum_{\substack{1\leq\alpha\leq n\\1\leq i,j,k\leq m}}(\frac{\partial f_\alpha}{\partial x_l\partial x_k}g^{ls}(D^2_{ij} f\cdot D_s f)+\frac{\partial  f_\alpha}{\partial x_j\partial x_l}g^{ls}(D^2_{ik} f\cdot D_s f)\\
			&+\frac{\partial  f_\beta}{\partial x_j\partial x_k}g^{\beta\gamma}(D^2_{i\cdot} f_\alpha\cdot D f_\gamma))^2\\
			\ \leq& \  2\left|D^2 f\right|^2\left|g^{ij}\right|^2\left|D^2 f\right|^2|D f|^2\\
			&\ +4\left|D^2 f\right|^2\left|g^{ij}\right|\left|g^{\alpha\beta}\right|^2|D f|^2+\left|D^2 f\right|^2\left|g^{\alpha\beta}\right|^2\left|D^2 f\right|^2|D f|^2\\
			\ \leq& \  \left|D^2 f\right|^4|D f|^2(2m+4\sqrt{mn}+n)
			\end{aligned}\end{equation}
			
		Thus we have\begin{equation}
			\left|D^3 f\right|^2\leq (1+|D f|^2)^4\left|\nabla\II\right|_g^2+2\sqrt{2m+4\sqrt{mn}+n}\left|D^3 f\right|\left|D^2 f\right|^2|D f|\end{equation}
			
		The claimed estimate for $\left|D^3 f\right|$ follows from this and the quadratic formula.\end{proof}
		
	Next we want to realize any immmersion $F:M\rightarrow\reals^{m+n}$ as a collection of graphs over discs.
	
	We introduce the following notation and notions, following \cite{langer85}.  Given $q\in M$, denote by $A_q$ any Euclidean isometry which takes $F(q)$ to the origin and $T_{F(q)}F(M)$ to the plane $\{(x_1,\ldots,x_m,0)\}$.  Let $\pi$ be the projection of $\reals^{n+m}$ to the plane $\{(x_1,\ldots,x_m,0)\}$.  Define the {\em Langer chart at $q$} $U_{r,q}\subset M$ to be the component of $(\pi\circ A_q\circ F)^{-1}(D_r)$ which contains $q$.  
	
	We call $F:M\rightarrow \reals^{n+m}$ a $(r,\alpha)$-immersion if for each $q\in M$ there is some $ f_q:D_r^m\rightarrow\reals^n$ with $Df_q(0)=0$ and $|D f_q|\leq \alpha$ so that $A_q\circ F(U_{r,q})=\graph( f_q)$.  
	
	\begin{lemma}\label{arralpha} Let $\alpha>0$. Then for any $C^2$-immersed submanifold $F:M^m\rightarrow \reals^{n+m}$ and any $r$ satisfying
		\begin{align*}
			r\leq \frac{\alpha}{\left(1+\alpha^2\right)^{3/2}}\frac{1}{\sup_M\left|\II\right|_g}
		\end{align*}
		$F$ is a $(r,\alpha)$-immersion.\end{lemma}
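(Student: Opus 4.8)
The plan is to reduce to the local graph picture of Lemma~\ref{hessianbound} and then run a continuity (bootstrap-in-radius) argument that controls the graph slope and the global injectivity of the vertical projection at the same time.

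First I would fix $q\in M$, apply the normalizing isometry $A_q$, and write $G=A_q\circ F=(u,w)$ with $u=\pi\circ G:M\to\reals^m$ and $w:M\to\reals^n$; then $G(q)=0$, $w(q)=0$, $dw_q=0$ (the tangent plane at $q$ is horizontal), and $du_q$ is an isomorphism. For $\rho\le r$ write $U_{\rho,q}$ for the component of $u^{-1}(D_\rho)$ containing $q$. The goal is to show that $u:U_{r,q}\to D_r$ is a diffeomorphism, since then $f_q:=w\circ(u|_{U_{r,q}})^{-1}:D_r\to\reals^n$ is the desired graph function, with $Df_q(0)=dw_q\circ(du_q)^{-1}=0$ automatically, and $|Df_q|\le\alpha$ to be extracted from the slope estimate.

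The key input is the a priori slope bound. Wherever $G(M)$ is locally a graph over a horizontal disc with graph function $f$ satisfying $|Df|\le\alpha$, Lemma~\ref{hessianbound} gives $|D^2f|\le(1+\alpha^2)^{3/2}\sup_M|\II|_g=:K$; since $Df$ vanishes at the centre, integrating along rays yields $|Df(x)|\le K|x|$. Because the hypothesis says exactly $r\le\alpha/K$, on any sub-disc $D_\rho$ with $\rho\le r$ one gets $|Df|\le K\rho\le\alpha$, and strictly $|Df(x)|<\alpha$ when $|x|<r$: the slope cannot be saturated strictly inside $D_r$. I would globalize this via $R:=\sup\{\rho\in(0,r]:u:U_{\rho,q}\to D_\rho$ is a diffeomorphism with $|Df_\rho|\le\alpha$ on $D_\rho\}$. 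One has $R>0$ (inverse function theorem at $q$, using $dw_q=0$ so that the slope is small near $q$), the defining property holds for all $\rho<R$ (restrict a graph to a smaller disc), and the set of such $\rho$ is closed: for $\rho_j\uparrow R$ the local inverses $(u|_{U_{\rho_j,q}})^{-1}$ glue to a section $\Phi_R$ of $u$ over $D_R$ with $\Phi_R(0)=q$, and $\Phi_R(D_R)$ is open (a local section of the local diffeomorphism $u$) and closed (limits of $\Phi_R$-values are $\Phi_R$-values by continuity) in the connected set $U_{R,q}$, hence equals it. Finally, if $R<r$ the a priori estimate forces $|Df_R|<\alpha$ on the compact disc $\overline{D_R}$; the slope thus stays away from infinity, $u$ remains a local diffeomorphism on a neighbourhood of $U_{R,q}$, and lifting the radial segments of $D_{R+\delta}$ out from $q$ produces a section of $u$ over $D_{R+\delta}$ for some $\delta>0$, which by the same clopen argument identifies $U_{R+\delta,q}$ with that chart, contradicting maximality. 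Hence $R=r$ and $f_q:=f_r$ works; as $q$ was arbitrary, $F$ is an $(r,\alpha)$-immersion.

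The hard part will not be the differential inequality — that falls straight out of Lemma~\ref{hessianbound} together with the elementary arithmetic relating $r$, $\alpha$ and $K$ — but the topological bookkeeping: the Langer chart $U_{\rho,q}$ is defined only as a connected component of a preimage, and one must rule out the projection $u$ folding back and identifying distinct points of $U_{\rho,q}$. The mechanism is that while the slope stays finite $u$ is a local diffeomorphism and $D_\rho$ is simply connected, so lifting radial segments of $D_\rho$ from $q$ (which the a priori estimate guarantees is possible all the way to radius $r$) sweeps out precisely $U_{\rho,q}$ and realizes it as a graph. Making precise that these lifts depend continuously on the ray and fill out exactly the component — rather than a proper subset or a self-overlapping image — is the step requiring the most care.
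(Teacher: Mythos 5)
Your proof is correct and follows essentially the same strategy as the paper: the engine in both is the fundamental theorem of calculus applied to $Df$ along rays, with Lemma~\ref{hessianbound} supplying the a priori Hessian bound and the hypothesis $r\leq\alpha/\bigl((1+\alpha^2)^{3/2}\sup_M|\II|_g\bigr)$ closing the loop. The paper argues that the slope must blow up as the radius approaches its supremal value $S_q$ and then locates a radius $r_q$ where $\sup|Df_q|=\alpha$ exactly, whereas your open--closed bootstrap packages the identical estimate while being somewhat more explicit about the topological point that the projection really is a diffeomorphism onto $D_r$ on the whole Langer chart.
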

	\begin{proof}
		Let $q\in M$ be arbitrary. Every submanifold is locally a graph over its tangent plane; thus $A_q(F(U_{r,q}))$ can be written as a graph over $D_r$ for small enough $r$.  So we set $S_q=\sup\{r|F(U_{r,q})=\graph( f_{r,q})\}$.  For any large $K$, if $F(U_{r,q})=\text{graph}( f_{r,q})$ and $|D f_{r,q}|\leq \frac{K}{2}$, we can extend $ f_{r,q}$ to have a larger domain and still $|D f|\leq K$.  Thus we have\begin{equation}\begin{aligned}
			\lim_{r\rightarrow S_q}\inf_{ f}\sup_{D_r}|D f|=\infty\end{aligned}\end{equation}
		where the infimum is taken over all $f$ with $Df(0)=0$ of which $A_qF(U_{r,q})$ is a graph.
		Thus for our given $\alpha$ there exists some $r_q,  f_{q}:D_{r_q}\rightarrow \reals^n$ with $\sup_{D_{r_q}}|D f_{q}|=\alpha$.  Now we use the fundamental theorem of calculus and Lemma \ref{hessianbound} to get\begin{equation}\label{fundamental.theorem}\begin{aligned}
			\alpha=\sup_{D_{r_q}}|D f_{q}|\leq r\sup_{D_{r_q}}\left|D^2 f_{q}\right|\leq r_q \left(1+\alpha^2\right)^{3/2}\sup_{D_{r_q}}\left|\II_{ f_q}\right|_g\end{aligned}\end{equation}
		which implies that \begin{equation}\label{Sq}
			r_q\geq \frac{\alpha}{\left(1+\alpha^2\right)^{3/2}}\frac{1}{\sup_{D_{r_q}}\left|\II_{ f_q}\right|_g}\geq\frac{\alpha}{\left(1+\alpha^2\right)^{3/2}}\frac{1}{\sup_M\left|\II\right|_g}.\end{equation}
		So for $r$ less than the right-hand side of (\ref{Sq}), there is some $ f:D_r\rightarrow\reals^{m+n}$ of which $A_q F(U_{r,q})$ is the graph, with $Df(0)=0$ and $|Df|\leq \alpha$.
		\end{proof}

	We will also make use of the following lemma, which relates the Langer atlas to the metric structure of $(M,F^*dx^2)$.

		\begin{lemma}\label{balls}
			Let $\alpha\leq \sqrt{3}$. In a $(r,\alpha)$-immersion $F:M^m\rightarrow\reals^{m+n}$, for any $0<\rho\leq \frac{r}{2}$, $\ell\in \mathbb{N}$, $q_0\in M$, we have that any metric ball $B(q_0,\ell\frac{\rho}{2})\subset M$ can be covered using at most $K^\ell$ Langer charts $U_{p,\frac{\rho}{4}}$ of radius $\frac{\rho}{4}$, such that $p\in B(q_0,\ell\rho)$, where $K=K(m,\alpha)$ is a constant depending only on the dimension $m$ and the constant $\alpha$.
			Moreover, we can assume that if $\ell_1\leq \ell_2$, the covering of $B(q_0,\ell_2\frac{\rho}{2})$ in $K^{\ell_2}$ Langer charts of radius $\frac{\rho}{4}$ contains the $K^{\ell_1}$ Langer charts used to cover $B(q_0,\ell_1\frac{\rho}{2})$.
		\end{lemma}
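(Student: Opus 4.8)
\textbf{Proof proposal for Lemma \ref{balls}.}

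The plan is to prove this by induction on $\ell$, using a volume-type doubling argument for the counting bound. The key geometric fact to exploit is that in an $(r,\alpha)$-immersion, a Langer chart $U_{p,\rho/4}$ both contains and is contained in metric balls of controlled radius: since $Df_p(0)=0$ and $|Df_p|\leq\alpha\leq\sqrt 3$, the induced metric $g_p=\delta+Df_p\otimes Df_p$ satisfies $\delta\leq g_p\leq (1+\alpha^2)\delta\leq 4\delta$ on the chart, so Euclidean distances in the disc $D_{\rho/4}$ and $g$-distances along $M$ are comparable up to the factor $2$. Consequently there are radii $0<c_1\rho\leq c_2\rho$, with $c_1,c_2$ depending only on $m$ and $\alpha$, such that $B(p,c_1\rho)\subset U_{p,\rho/4}\subset B(p,c_2\rho)$; shrinking constants if necessary I will arrange $c_1\rho\geq \rho/4$ is false in general, so instead I keep track of the two scales separately. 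The first step is to make these inclusions precise.

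Second, I would establish the base case $\ell=1$: cover $B(q_0,\rho/2)$ by charts $U_{p,\rho/4}$ with $p\in B(q_0,\rho)$. Pick a maximal $\frac{\rho}{8}$-separated set $\{p_i\}$ in $B(q_0,\rho/2)$; maximality gives that the charts $U_{p_i,\rho/4}$ (which contain $B(p_i,c_1\rho)\supset B(p_i,\rho/8)$ once constants are arranged, or at worst one passes to a slightly larger chart radius absorbed into $K$) cover $B(q_0,\rho/2)$, and the separation together with the lower volume bound on $g$-balls (from $g_p\geq\delta$, a ball of radius $\rho/16$ in each chart has Euclidean-comparable volume bounded below) and the upper volume bound on $B(q_0,2\rho)$ (packing all the disjoint small balls $B(p_i,\rho/16)$ into it, using $g\leq 4\delta$ on each chart to bound volume above) forces the number of $p_i$ to be at most some $K=K(m,\alpha)$. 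All of the $p_i$ lie in $B(q_0,\rho/2)\subset B(q_0,\rho)$, as required.

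Third is the inductive step and the nesting (``moreover'') clause. Assume $B(q_0,\ell\frac{\rho}{2})$ is covered by charts $U_{p,\rho/4}$, $p\in B(q_0,\ell\rho)$, $N_\ell\leq K^\ell$ of them. A point $x\in B(q_0,(\ell+1)\frac{\rho}{2})$ lies within $\frac{\rho}{2}$ of the sphere of radius $\ell\frac{\rho}{2}$ or already in $B(q_0,\ell\frac{\rho}{2})$; in either case $x\in B(y,\rho/2)$ for some $y$ with $d(q_0,y)\leq\ell\frac{\rho}{2}$ and $d(q_0,y)\leq \ell\rho$, and by the base-case argument applied at $y$, $B(y,\rho/2)$ is covered by at most $K$ charts $U_{p,\rho/4}$ with $p\in B(y,\rho)\subset B(q_0,(\ell+1)\rho)$. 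Taking the union over a cover of $B(q_0,(\ell+1)\frac{\rho}{2})\setminus B(q_0,\ell\frac{\rho}{2})$ — which by the same packing estimate needs at most $K^\ell$ of these radius-$\rho/2$ balls $B(y,\rho/2)$, since their doubled copies are disjointly packable into an annular region of bounded volume — and adding the $N_\ell$ charts already used, we obtain a cover of $B(q_0,(\ell+1)\frac{\rho}{2})$ by at most $K^\ell+K\cdot K^\ell\leq K^{\ell+1}$ charts (after harmlessly enlarging $K$), all centered in $B(q_0,(\ell+1)\rho)$; and by construction this cover contains the previous one, giving the nesting. The main obstacle is bookkeeping the constants so that the same $K$ works simultaneously for the per-annulus chart count, the per-annulus ball count, and the geometric inclusions $B(p,c_1\rho)\subset U_{p,\rho/4}\subset B(p,c_2\rho)$; this is where the hypothesis $\alpha\leq\sqrt 3$ (so $1+\alpha^2\leq 4$) is used to keep all metric distortions bounded by the single absolute factor $2$, and one should fix $K$ as the max of the finitely many dimension-and-$\alpha$-dependent constants produced along the way.
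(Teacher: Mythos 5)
Your base case is sound, although your $K$ comes from a Riemannian packing/volume-comparison argument whereas the paper takes $K$ to be the purely Euclidean covering number of $D_{2\rho}^m$ by discs of radius $\rho/(4\sqrt{1+\alpha^2})$; either gives a constant depending only on $m$ and $\alpha$. The genuine gap is in your inductive step. You propose to re-cover the new annulus by a fresh packing argument, asserting that at most $K^\ell$ balls of radius $\rho/2$ suffice ``since their doubled copies are disjointly packable into an annular region of bounded volume.'' There is no such a priori volume bound: the entire point of this paper is to work without volume or diameter hypotheses, and $\vol\bigl(B(q_0,(\ell+1)\rho/2)\bigr)$ can grow exponentially in $\ell$. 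To justify your count you would first need to prove $\vol\bigl(B(q_0,(\ell+1)\rho/2)\bigr)\leq C(m,\alpha)\,K^\ell\rho^m$ from the inductive hypothesis on the chart count together with the Euclidean-comparable volume of each chart, carry that estimate as a second inductive quantity, and then verify that the resulting recursion $N_{\ell+1}\leq N_\ell + K\cdot(\text{annulus ball count})$ closes with a single $K$ fixed in advance --- you cannot simply ``harmlessly enlarge $K$'' at each step, since $K^\ell+K\cdot K^\ell>K^{\ell+1}$; instead one must set $K'$ from the recursion once at the outset. All of this can be made to work, but none of it is in your sketch, and the claim of an $\ell$-independent volume bound is false.

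The paper avoids every one of these issues by never re-packing: it fattens each of the $K^\ell$ existing charts $U_{p_i,\rho/4}$ by $\rho/2$, uses $\alpha\leq\sqrt 3$ to get $\sqrt{1+\alpha^2}\,\rho/4+\rho/2\leq\rho$ so that each fattened chart lies in the metric ball $B(p_i,\rho)$, and then applies the base case to each $B(p_i,\rho)$ to produce $K$ further charts centered within $\rho$ of $p_i$. This gives exactly $K^{\ell+1}$ charts with centers in $B(q_0,(\ell+1)\rho)$, and (arranging that $U_{p_i,\rho/4}$ is among the $K$ charts covering $B(p_i,\rho)$, or simply by unioning and enlarging $K$ once at the start) the previous cover is contained in the new one, which gives the nesting clause for free. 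The count propagates directly through the existing cover and volume is never mentioned. I would keep your base case but replace your inductive step with this fattening argument.
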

		\begin{proof}
			We proceed by induction.  If $\ell=2$, we have $B(q_0,\rho)\subset \subset U_{q_0,\rho}\subset U_{q_0,2\rho}$.  Since $\rho\leq \frac{r}{2}$, the projection of $F(U_{q_0,2\rho})$ to $T_{F(q_0)}F(M)$ is the $m$-ball of radius $2\rho$.  Let $K$ be the number of $m$-balls of radius $\frac{\rho}{4\sqrt{1+\alpha^2}}$ which can cover $D^m_{2\rho}$.  Each ball of radius $\frac{\rho}{4\sqrt{1+\alpha^2}}$ is contained in the projection of some $U_{p,\frac{\rho}{4}}$, $p\in U_{q_0,2\rho}$.  Thus we have that $B(q_0,\rho)$ can be covered by $K$ Langer charts of radius $\frac{\rho}{4}$. It is clear that the centers of these Langer charts can be taken to lie in $B(q_0,\rho)$.
			
			Now suppose $B(q_0,\ell\frac{\rho}{2})\subset \bigcup_{i=1}^{K^\ell}U_{p_i,\frac{\rho}{4}}$.  Then $B(q_0,(\ell+1)\frac{\rho}{2})$ is contained in a $\frac{\rho}{2}$ neighborhood of $\bigcup_{i=1}^{K^\ell}U_{p_i,\frac{\rho}{4}}$.  On the other hand, the $\frac{\rho}{2}$ neighborhood of each $U_{p_i,\frac{\rho}{4}}$ is contained in $B(p_i,\sqrt{1+\alpha^2}\frac{\rho}{4}+\frac{\rho}{2})$. Since $\alpha\leq \sqrt{3}$, we have
			\begin{equation}
				B(q_0,(\ell+1)\frac{\rho}{2})\subset \bigcup_{i=1}^{K^\ell}B(p_i,\rho)
			\end{equation}
		Each term in the union on the right-hand side can, by the definition of $K$, be covered by $K$ Langer charts of radius $\frac{\rho}{4}$, centered within distance $\rho$ of one of the $p_i$.  This completes the inductive step.\end{proof}

	\subsubsection{General Case}\label{section.ambient.construction}
	The general case of Lemma \ref{hessianbound} is:
		\begin{lemma}\label{hessianbound.ambient}
			Let $(N^{m+n},h)$ a Riemannian manifold, $x$ a point in $N$, $y:\reals^{m+n}\rightarrow (U,x)$ a coordinate chart of $N$. If $ f:D_r^m\rightarrow\reals^{n}$ is a $C^2$ function, then there exists $C$ depending on $|Dy|$ and $|D(y^{-1})|$ so that \begin{align*}
				\left|D^2 f\right|^2\leq C(1+|D f|^2)^3\left|\II\right|^2\end{align*}
			where $\II$ is the second fundamental form of $y(\graph f)$.
		\end{lemma}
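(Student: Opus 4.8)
The plan is to transfer the Euclidean estimate of Lemma~\ref{hessianbound} into the chart $y$ by pulling the metric back. Set $\hat h = y^* h$, a Riemannian metric on $\reals^{m+n}$, so that $y\colon(\reals^{m+n},\hat h)\to(U,h)$ is an isometry onto its image. A local isometry carries the second fundamental form of a submanifold to that of its image and preserves the norm of $\II$, so $|\II(y(\graph f))|_h=|\II(\graph f)|_{\hat h}$; thus it suffices to bound $|D^2f|^2$ by $C(1+|Df|^2)^3\,|\II(\graph f)|_{\hat h}^2$ with $C$ built from the chart data.

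First I would record that $\hat h$ is uniformly comparable to the Euclidean metric: writing $\hat h_{AB}=h(\partial_A y,\partial_B y)$ gives $\Lambda^{-1}\delta_{AB}\le \hat h_{AB}\le\Lambda\,\delta_{AB}$ with $\Lambda$ controlled by $|Dy|$ and $|D(y^{-1})|$. As in \eqref{inveigen}, this yields two-sided eigenvalue bounds (with $\Lambda$-dependent constants) for the tangential and normal metrics that $\graph f$ inherits from $\hat h$, using the frame $e_i=(0,\dots,1,\dots,0,D_i f)$ of the proof of Lemma~\ref{hessianbound} together with an $\hat h$-orthonormal normal frame; in particular the $\hat h$-orthogonal projection onto the normal bundle of $\graph f$ of a purely vertical vector $(0,\dots,0,v)$ has $\hat h$-length at least $c(\Lambda)(1+|Df|^2)^{-1/2}|v|$.

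Next I would recompute $\II$ with the non-flat ambient connection. Writing the Levi-Civita connection of $\hat h$ as $\nabla^{\hat h}=D+\hat\Gamma$, with $D$ the flat connection on $\reals^{m+n}$, and using $D_{e_i}e_j=(0,D^2_{ij}f)$ as in Lemma~\ref{hessianbound}, one obtains
\[
\II(e_i,e_j)=\proj^{\perp_{\hat h}}\!\big((0,D^2_{ij}f)\big)+\proj^{\perp_{\hat h}}\!\big(\hat\Gamma(e_i,e_j)\big).
\]
For the first (main) term, combining the vertical-projection estimate with the eigenvalue bounds and summing over $i,j,\alpha$ exactly as in Lemma~\ref{hessianbound} gives $|\proj^{\perp_{\hat h}}(0,D^2f)|_{\hat h}^2\ge c(\Lambda)(1+|Df|^2)^{-3}|D^2f|^2$. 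The second term involves only $f$ and $Df$, no second derivatives of $f$, and satisfies $|\hat\Gamma(e_i,e_j)|_{\hat h}\le C(1+|Df|^2)$ with $C$ absorbing the Christoffel symbols of $\hat h$. Rearranging yields $|D^2f|\le C(1+|Df|^2)^{3/2}|\II|_{\hat h}+C(1+|Df|^2)^{5/2}$, and the stated inequality follows once the additive error is absorbed---which is legitimate in the regime where the lemma is applied, the chart being of definite size and the relevant quantities (in particular $|Df|$, via Lemma~\ref{arralpha}) being under control.

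The step I expect to be the main obstacle is precisely the correction term $\proj^{\perp_{\hat h}}(\hat\Gamma(e_i,e_j))$. In the Euclidean case the ambient connection is flat and $\II$ pins down $D^2f$ exactly; here $\II$ determines $D^2f$ only up to this lower-order piece, whose size is governed by the first derivatives of $\hat h$, equivalently by the second derivatives of the chart $y$. Controlling it---and hence pinning down the dependence of $C$---requires more than a bare $C^1$ bound on $y$; in the situation of Theorem~\ref{geometric.convergence.ambient} this is supplied for free, since the uniform curvature and injectivity-radius bounds furnish charts (harmonic, or geodesic normal) of a definite size on which all derivatives of $y$ and $y^{-1}$ that enter the estimate are controlled.
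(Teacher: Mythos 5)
Your proposal follows the same strategy as the paper's own proof: pull $h$ back via $y$ so that $|\II(y(\graph f))|=|\II(\graph f)|_{y^*h}$, reduce to $\graph f$ in $(\reals^{m+n},y^*h)$, and compare the induced tangential and normal metrics to the Euclidean ones via eigenvalue bounds. What you add is precisely the step the paper omits. Writing the Levi-Civita connection of $\hat h=y^*h$ as $D+\hat\Gamma$ gives
\[
\II(e_i,e_j)=\proj^{\perp}\bigl((0,D^2_{ij}f)\bigr)+\proj^{\perp}\bigl(\hat\Gamma(e_i,e_j)\bigr),
\]
whereas the paper stops at the eigenvalue comparisons (\ref{ambient.eigenvalue1}), (\ref{ambient.eigenvalue2}) and asserts that ``just as in Lemma~\ref{hessianbound}'' one bounds $|D^2f|$ by $|\overline{\II}|$, never actually computing $\overline{\II}$ and so never meeting the correction term. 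You are right that this correction, while lower order in $f$, is governed by the Christoffel symbols of $\hat h$ and hence by $|D^2y|$, which is missing from the lemma's stated list of dependencies; and you are right that the clean multiplicative estimate cannot hold as written (a totally geodesic submanifold has $\II\equiv 0$ yet will generically have $D^2 f\neq 0$ in a curved chart). The honest conclusion is the additive estimate you derive, $|D^2f|\le C(1+|Df|^2)^{3/2}|\II|+C(1+|Df|^2)^{5/2}$ with $C$ also depending on $|D^2y|$. One small imprecision in your wording: the additive term cannot be ``absorbed'' in general, since it dominates when $|\II|$ is small; what is true, and what you essentially go on to say, is that the additive version is the one actually used in Lemma~\ref{arralpha.ambient}, where it merely shifts the resulting lower bound on the chart radius, and the Cheeger--Gromov charts of Proposition~\ref{CG.charts} supply the required control on $|D^2y|$. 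In short, your write-up both fills a genuine gap in the paper's proof and quietly corrects the statement of the lemma.
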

		\begin{proof}
			Let $\overline{\II}$ be the second fundamental form of $\graph f$ considered as a submanifold of $(\reals^{m+n},y^*h)$.  Then $\left|\II\right|=\left|\overline{\II}\right|$, so we will compute $\left|\overline{\II}\right|^2$ as in the proof of Lemma \ref{hessianbound}.  We will abuse notation and write $h$ for $y^*h$. We will write $\overline{g}_{ij}$ for the metric induced on $\graph f$ by $h$, and $\overline{g}_{\alpha\beta}$ for the metric on the normal bundle of $\graph f$ with respect to $h$. We let $\theta$ be the least eigenvalue of $h$ and $\Theta$ the greatest eigenvalue of $h$.
			
			The tangent bundle of $\graph f$ is spanned, as before, by \begin{equation*}
				e_i=(0,\ldots,0,1,0,\ldots,0,D_i f)\end{equation*}
			So the induced metric is\begin{equation}\begin{aligned}
				\overline{g}_{ij}=&h(e_i,e_j)\\
					=&h_{ij}+ h(D_i f,D_j f)+h_{pj}\frac{\partial  f_p}{\partial x_i}+ h_{iq}\frac{\partial  f_q}{\partial x_j}
				\end{aligned}\end{equation}
			and the eigenvalues $\overline{\mu}=\overline{g}(X,X)$ of $\overline{g}_{ij}$ are therefore bounded by\begin{equation}\label{ambient.eigenvalue1}\begin{aligned}
				\theta \leq h(X,X)\leq \overline{\mu}\ \leq& \  h(X,X)\left(1+|D f|_{h}+|D f|^2_{h}\right)\\
				\ \leq& \  \Theta\left(1+\Theta^{\frac{1}{2}}|D f|+\Theta |D f|^2\right)\end{aligned}\end{equation}
			where $|\cdot|_{h}$ denotes the norm induced by $h$.
			
			The normal bundle $N_h$ is characterised by $N_h=\{X|h(X,e_i)\}=0$.  Equivalently $N_h=h^{-1}N_{dx^2}$, where $N_{dx^2}$ is the normal bundle of the graph with respect to the standard metric $dx^2$ and we consider $h$ as a bundle map over the identity $h:T\reals^{m+n}\rightarrow T\reals^{m+n}$.  We may thus take a normal frame $\overline{\nu}_\alpha=h^{-1}(\nu_\alpha)$.  Then to compute the eigenvalues of the normal metric $\overline{g}_{\alpha\beta}$, we consider $X\in \reals^n$ with $|X|^2=1$:
				\begin{equation}\label{ambient.eigenvalue2}\begin{aligned}
					\overline{g}(X,X)=\overline{g}_{\alpha\beta}X^\alpha X^\beta =&h(\overline{\nu}_\alpha,\overline{\nu}_\beta)X^\alpha X^\beta\\
						=&h(h^{-1}(\nu_\alpha),h^{-1}(\nu_\beta))X^\alpha X^\beta\\
						=&\nu_\alpha\cdot h^{-1}(\nu_\beta)X^\alpha X^\beta\\
						=&(-D(X\cdot f),X)\cdot y^{-1}(-D(X\cdot f),X)\\
						=&h^{-1}((-D(X\cdot f),X),(-D(X\cdot f),X))\end{aligned}\end{equation}
			Thus we have\begin{equation}
					\frac{1}{\Theta} \leq \overline{g}(X,X)\leq \frac{1}{\theta} (1+|D f|^2)\end{equation}
					
			Now just as in the proof of Lemma \ref{hessianbound}, we use (\ref{ambient.eigenvalue1}) and (\ref{ambient.eigenvalue2}) to bound the Hessian of $ f$ in terms of $\left|\overline{\II}\right|$, $|D f|$, and $\theta,\Theta$.  The eigenvalues of $h^{-1}$ are clearly controlled by $|Dy|$ and $|Dy^{-1}|$.					
		\end{proof}
	Similarly one can extend Lemma \ref{higherderivs} to a general ambient manifold:
		\begin{lemma}\label{higherderivs.ambient}
			Let $(N,h)$, $x$, and $y$ be as above. If $ f:D_r^m\rightarrow\reals^{n}$ is a $C^\ell$ function, then we can bound $\left|D^\ell f\right|$ in terms of $|D f|,\ldots, \left|D^{\ell-1} f\right|$, $\left|\nabla^{\ell-2}\II\right|$, $|Dy|,\ldots,\left|D^{\ell-1}y\right|$, and $\left|D(y^{-1})\right|,\ldots,\left|D^{\ell-1}(y^{-1})\right|$, where $\II$ and $\nabla$ are the second fundamental form and covariant derivative on $y(\graph f)$.
		\end{lemma}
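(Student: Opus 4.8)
The plan is to induct on $\ell$, running the argument of Lemma \ref{higherderivs} verbatim except that the flat ambient connection is replaced by the Levi-Civita connection $\overline{\nabla}$ of $h=y^{*}h$ on $\reals^{m+n}$ (abusing notation as in Lemma \ref{hessianbound.ambient}), and using that $y$ is an isometry onto its image, so that $\left|\nabla^{j}\II\right|=\bigl|\overline{\nabla}^{j}\overline{\II}\bigr|$ for all $j$, where barred quantities refer to $\graph f\subset(\reals^{m+n},h)$. The base case $\ell=2$ is Lemma \ref{hessianbound.ambient}; the leading term of $\overline{\II}$ in coordinates is $D^{2}f$ and the correction is linear in $|Df|$ with coefficient the ambient Christoffel symbols $\overline{\Gamma}$.

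For the inductive step, assume the asserted bound for orders $2,\dots,\ell-1$. Expanding $\overline{\nabla}^{\ell-2}\overline{\II}$ in the coordinates $(x_{1},\dots,x_{m})$, each application of $\overline{\nabla}$ contributes one coordinate derivative together with the induced tangential and normal Christoffel symbols and the restriction of $\overline{\Gamma}$; as in Lemma \ref{higherderivs} the induced Christoffel symbols are themselves built from $\overline{g}^{-1}$ on the tangent and normal bundles, from $\overline{\Gamma}$, and from $D^{\le 2}f$ (for instance $\Gamma_{ij}^{k}=\overline{g}^{kl}\,h\bigl(\overline{\nabla}_{e_{i}}e_{j},e_{l}\bigr)$ with $\overline{\nabla}_{e_{i}}e_{j}=(0,D^{2}_{ij}f)+\overline{\Gamma}(e_{i},e_{j})$). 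One thus obtains, schematically,
\begin{equation*}
\overline{\nabla}^{\ell-2}\overline{\II}=P\ast D^{\ell}f+Q,
\end{equation*}
where $P$ is a bundle map assembled only from the tangential and normal projections, and $Q$ is a finite sum of $\overline{g}^{-1}$-contractions of factors $D^{j}f$ with $j\le\ell-1$ and of $\overline{\Gamma}$ together with its coordinate derivatives up to order $\ell-2$. The eigenvalue estimates (\ref{ambient.eigenvalue1}) and (\ref{ambient.eigenvalue2}) bound the relevant components of $P$ away from zero, so $P$ is invertible with controlled inverse and hence
\begin{equation*}
\left|D^{\ell}f\right|\le C\Bigl(\bigl|\overline{\nabla}^{\ell-2}\overline{\II}\bigr|+|Q|\Bigr)=C\Bigl(\left|\nabla^{\ell-2}\II\right|+|Q|\Bigr).
\end{equation*}
Finally, $\overline{g}$, $\overline{g}^{-1}$, $\overline{\Gamma}$, and the derivatives of $\overline{\Gamma}$ through order $\ell-2$ are controlled by the $C^{\ell-1}$-norm of $h$ in the coordinates $y$, hence by $|Dy|,\dots,\left|D^{\ell-1}y\right|$ and $\left|D(y^{-1})\right|,\dots,\left|D^{\ell-1}(y^{-1})\right|$ (the ambient metric data being harmless in the bounded-geometry setting of Theorem \ref{geometric.convergence.ambient}); combining this with the inductive hypothesis applied to the $D^{j}f$, $j\le\ell-1$, occurring in $Q$ bounds $|Q|$ and closes the induction.

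The main obstacle is purely bookkeeping: one must verify that in the coordinate expansion of $\overline{\nabla}^{\ell-2}\overline{\II}$ the unique term containing $D^{\ell}f$ is the claimed leading term $P\ast D^{\ell}f$, with $P$ built solely from the normal and tangential projections so that the eigenvalue bounds apply, while every remaining term is genuinely of order $\le\ell-1$ in $f$ and involves only controlled ambient data. This is exactly the combinatorics already described as ``similar but more tedious'' in the Euclidean case of Lemma \ref{higherderivs}, now enlarged by the ambient-connection contributions; the cleanest way to organize it is to prove by induction the refined statement $\overline{\nabla}^{j}\overline{\II}=P_{j}\ast D^{j+2}f+Q_{j}$ with $Q_{j}$ of the form above, using the single identity $\overline{\nabla}=\partial+\Gamma^{\mathrm{tan}}+\Gamma^{\mathrm{nor}}$ and the explicit formulas for the induced Christoffel symbols.
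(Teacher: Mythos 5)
Your proposal is correct and matches what the paper intends: the paper gives no written proof of Lemma \ref{higherderivs.ambient}, saying only that one extends Lemma \ref{higherderivs} ``similarly,'' and your argument carries out exactly that extension — inducting on $\ell$, replacing the flat ambient connection with the Levi-Civita connection of $y^{*}h$ as in Lemma \ref{hessianbound.ambient}, isolating the leading $D^{\ell}f$ term, and absorbing the ambient Christoffel symbols and their derivatives into the $|D^{j}y|$, $|D^{j}(y^{-1})|$ bounds. The only cosmetic difference is that you package the estimate as ``invert $P$'' rather than via the Cauchy--Schwarz/quadratic-formula manipulation of Lemma \ref{higherderivs}, but these are equivalent since the leading coefficient of $D^{\ell}f$ in the coordinate expansion of $\overline{\nabla}^{\ell-2}\overline{\II}$ is the identity up to the controlled $\overline{g}^{-1}$ contractions appearing in the $|\cdot|_{\overline{g}}$ norm.
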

	
	The proof of Cheeger-Gromov's theorem involves the following proposition, which is analogous to our Lemma \ref{arralpha}.  An exposition can be found in chapter 10, section 3 of \cite{petersenriemannian}. 
		\begin{proposition}\label{CG.charts}
			Suppose $(N^{m+n},h)$ is a Riemannian manifold with $\inj(N,h)\geq \eta>0$ and $\left|\nabla^\ell\Rm(N,h)\right|\leq C$ for $1\leq\ell\leq\ell_0$.  Then there exist $r_0$ and $Q$ depending on $C,\eta,\ell_0,m,n$  such that for any $0<r\leq r_0$, each $x\in N$ admits a chart $y_x:(U_x,x)\rightarrow (\reals^{m+n},0)$ such that $y_x(U_x)$ contains $D_r^{m+n}\subset\reals^{m+n}$ and such that $\left|Dy_x\right|,\ldots,\left|D^{\ell_0+2}y_x\right|$ and $\left|D(y^{-1}_x)\right|,\ldots,\left|D^{\ell_0+2}(y^{-1}_x)\right|$, and the derivatives of the transition maps are all bounded by $Q$
			
			Moreover we may take a subatlas with the property that the centers of the charts are some definite $0<\delta\leq \frac{r_0}{4}$ apart.
		\end{proposition}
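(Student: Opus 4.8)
The plan is to take each $y_x$ to be a \emph{harmonic coordinate} chart centered at $x$. Once a uniform lower bound on the $C^{1,\gamma}$ harmonic radius of $(N,h)$ is in hand — a bound depending only on $C,\eta,\ell_0,m,n$ — every assertion of the proposition follows from standard elliptic regularity, so the single genuinely analytic step is to produce that bound. First I would observe that $\inj(N,h)\geq\eta$ already forces a uniform lower volume bound $\vol B(x,\eta/2)\geq v_0>0$ by Croke's inequality; together with $|\Rm|\leq C_0$ this places us in the setting of the Jost--Karcher construction of harmonic coordinates (equivalently, of Anderson's harmonic radius estimate), which yields $r_1>0$ and $\Lambda$, depending only on $C_0,\eta,m,n$, such that each $x$ carries a harmonic chart $y_x$ on $B(x,r_1)$ in whose coordinates the metric components $\tilde g_{ij}$ satisfy $\tfrac12\delta_{ij}\leq\tilde g_{ij}\leq 2\delta_{ij}$ and $\|\tilde g_{ij}\|_{C^{1,\gamma}}\leq\Lambda$. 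This is essentially the content of Chapter 10 of \cite{petersenriemannian}, which one could instead cite wholesale.

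Next I would bootstrap using the curvature-derivative hypotheses. In harmonic coordinates the metric satisfies the quasilinear elliptic system
\begin{equation*}
	\tilde g^{kl}\partial_k\partial_l\tilde g_{ij} = -2\Ric_{ij} + B_{ij}(\tilde g,\partial\tilde g),
\end{equation*}
where $B_{ij}$ is a universal expression, quadratic in $\partial\tilde g$ and smooth in $\tilde g$. Since $|\nabla^\ell\Rm|\leq C_\ell$ for $0\leq\ell\leq\ell_0$ forces $|\nabla^\ell\Ric|\leq C'_\ell$, I would feed this into the system and alternate interior Schauder estimates on a shrinking family of balls with the formulas expressing $\Gamma$ and $\partial^\ell\Rm$ through $\tilde g,\partial\tilde g$ and lower-order curvature; this upgrades $\tilde g$ from $C^{1,\gamma}$ to $C^{2,\gamma}$, then $C^{3,\gamma}$, and so on, terminating in $\|\tilde g\|_{C^{\ell_0+1,\gamma}(B(x,2r_0))}\leq Q$ for a suitable $r_0\leq r_1/2$. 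Restricting $y_x$ to the preimage of $D_r^{m+n}$ then yields, for every $r\leq r_0$, a chart whose image contains $D_r^{m+n}$ and which still obeys all these bounds.

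Then I would convert the metric bounds into the asserted bounds on the maps. The components of a transition map $y_{x'}\circ y_x^{-1}$ between overlapping charts are harmonic for the metric $\tilde g$ read off in the $y_x$-coordinates; since this map is bounded in $C^0$ (its image lies in a fixed Euclidean ball) and the coefficients of $\Delta_{\tilde g}$ are controlled by the previous step, interior Schauder estimates bound the transition map, and its inverse, in $C^{\ell_0+2,\gamma}$. Comparing $y_x$ against geodesic normal coordinates $\exp_x^{-1}$ — which the $\nabla^\ell\Rm$ bounds control at the relevant order on $B(x,r_0)$ via Jacobi-field estimates — the same reasoning bounds $|D^j y_x|$ and $|D^j(y_x^{-1})|$ for $j\leq\ell_0+2$. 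Finally, for the subatlas, choose a maximal $\delta$-separated set $\{x_i\}\subset N$ with $\delta=r_0/4$: maximality makes the balls $B(x_i,r_0)\supset U_{x_i}$ cover $N$, and Bishop--Gromov, using only the lower Ricci bound implied by $|\Rm|\leq C_0$, shows that a bounded number of the $U_{x_i}$ meet any given point, so only boundedly many transition maps are ever in play.

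I expect the one real obstacle to be the first step, the uniform lower bound on the harmonic radius, whose proof is a compactness/contradiction argument relying on $L^p$ and Schauder theory; the bootstrapping and the packing argument are routine once it is available, and in fact the whole proposition can be quoted directly from \cite{petersenriemannian}.
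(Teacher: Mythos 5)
Your proposal is correct and matches the paper's approach: the paper does not give an independent argument but simply cites chapter 10, section 3 of \cite{petersenriemannian}, which is precisely the harmonic-coordinate / harmonic-radius construction you sketch (with the curvature-derivative bootstrap and the packing argument for the subatlas). You even note yourself that the whole proposition can be quoted wholesale from \cite{petersenriemannian}, which is exactly what the paper does.
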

		
	We refer to such an atlas as the Cheeger-Gromov atlas. 
		
	We are now ready to prove the version of Lemma \ref{arralpha} for a general ambient manifold. Toward this end, given $q\in M$, $r>0$, let the Langer chart $U_{r,q}$ be the component of $F^{-1}(y_{F(q)}(\pi^{-1}(D_r^m)))$ which contains $q$, where $\pi:\reals^{m+n}\rightarrow\reals^m$ is projection to the first $m$ coordinates.
	
		\begin{lemma}\label{arralpha.ambient}
			Let $F:M^m\rightarrow (N^{m+n},h)$ be an immersion such that $\left|\II\right|\leq C_1$, $\inj(N,h)\geq \eta>0$, and $\left|\Rm(N,h)\right|\leq C_2$.  For any $\alpha>0$, there is $r_1>0$ depending on $C_1,C_2,m,n,\alpha,\eta$ such that for any $0<r\leq r_1$, $q\in M$, $y_{F(q)}^{-1}(F(U_{r,q}))=\graph f$ for some $ f:D_r^m\rightarrow\reals^n$ with $|D f|\leq\alpha$.
		\end{lemma}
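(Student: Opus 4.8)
The plan is to transplant the proof of Lemma~\ref{arralpha} into a Cheeger--Gromov chart centered at $F(q)$, using Lemma~\ref{hessianbound.ambient} in place of Lemma~\ref{hessianbound} and Proposition~\ref{CG.charts} in place of the trivial ``chart'' used in the Euclidean case. Since only first-order control of $f$ is needed here, it suffices to apply Proposition~\ref{CG.charts} with $\ell_0=0$: from $\left|\Rm(N,h)\right|\le C_2$ and $\inj(N,h)\ge\eta$ it produces $r_0=r_0(C_2,\eta,m,n)$ and $Q=Q(C_2,\eta,m,n)$ so that each $x\in N$ admits a chart $y_x$, regarded as a map from a neighbourhood of $0\in\reals^{m+n}$ containing $D^{m+n}_{r_0}$ onto a neighbourhood of $x$, with $\left|Dy_x\right|$, $\left|D^2 y_x\right|$, $\left|D(y_x^{-1})\right|$, $\left|D^2(y_x^{-1})\right|$ all bounded by $Q$. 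Feeding the bounds on $\left|Dy_{F(q)}\right|$ and $\left|D(y_{F(q)}^{-1})\right|$ into Lemma~\ref{hessianbound.ambient} produces a constant $C=C(Q)=C(C_2,\eta,m,n)$ with
\[
\left|D^2 f\right|^2\le C\bigl(1+|Df|^2\bigr)^3\left|\II\right|^2
\]
for every graph with $\graph f\subset D^{m+n}_{r_0}$, where $\II$ is the second fundamental form of $y_{F(q)}(\graph f)$.

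Fix $q\in M$ and write $y=y_{F(q)}$. Composing $y$ with a Euclidean rotation of $\reals^{m+n}$, which changes none of the bounds of Proposition~\ref{CG.charts}, we may assume that $dy_0$ maps the horizontal plane $P_0=\{x_{m+1}=\cdots=x_{m+n}=0\}$ onto $T_{F(q)}F(M)$. Consequently, whenever $y^{-1}(F(U_{r,q}))$ is a graph $\graph f$, we have $f(0)=0$ and $Df(0)=0$: indeed $0=y^{-1}(F(q))\in\graph f$, and the tangent plane of $\graph f$ at $0$ equals $P_0$. Because a submanifold is locally a graph over its tangent plane, $y^{-1}(F(U_{r,q}))=\graph f_{r,q}$ with $Df_{r,q}(0)=0$ for all sufficiently small $r$, and $f_{r,q}$ is then unique. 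Moreover, if $|Df_{r,q}|\le\alpha$ on $D^m_r$ then $\graph f_{r,q}\subset D^{m+n}_{r\sqrt{1+\alpha^2}}$, so the graph remains inside the good domain $D^{m+n}_{r_0}$ of $y$ as long as $r\le r_0/\sqrt{1+\alpha^2}$.

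Now argue as in the proof of Lemma~\ref{arralpha}. Let $S_q>0$ be the supremum of the radii $r\le r_0/\sqrt{1+\alpha^2}$ for which $y^{-1}(F(U_{r,q}))=\graph f_{r,q}$ with $Df_{r,q}(0)=0$ and $|Df_{r,q}|<\alpha$ on $D^m_r$. If $S_q<r_0/\sqrt{1+\alpha^2}$, then $\sup_{D^m_r}|Df_{r,q}|\to\alpha$ as $r\to S_q^-$: a graph over $D^m_r$ whose gradient stays bounded away from $\alpha$ remains transverse to the vertical near its boundary, hence extends to a slightly larger disc with gradient still $<\alpha$ and, since $S_q<r_0/\sqrt{1+\alpha^2}$, still contained in $D^{m+n}_{r_0}$, contradicting the definition of $S_q$. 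On the other hand, for $r<S_q$ we have $|Df_{r,q}|\le\alpha$ on $D^m_r$, so Lemma~\ref{hessianbound.ambient} and $\left|\II\right|\le C_1$ give $\left|D^2 f_{r,q}\right|\le\sqrt{C}(1+\alpha^2)^{3/2}C_1$ there, and the fundamental theorem of calculus --- exactly as in~(\ref{fundamental.theorem}) --- yields
\[
\sup_{D^m_r}|Df_{r,q}|\ \le\ r\,\sqrt{C}\,(1+\alpha^2)^{3/2}\,C_1 .
\]
If $S_q<r_0/\sqrt{1+\alpha^2}$, letting $r\to S_q^-$ in this inequality shows $S_q\ge\frac{\alpha}{\sqrt{C}(1+\alpha^2)^{3/2}C_1}$. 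Setting
\[
r_1=\tfrac12\min\left\{\frac{r_0}{\sqrt{1+\alpha^2}},\ \frac{\alpha}{\sqrt{C}\,(1+\alpha^2)^{3/2}C_1}\right\},
\]
which depends only on $C_1,C_2,m,n,\alpha,\eta$, we claim $r_1<S_q$: if instead $S_q\le r_1$, then $S_q<r_0/\sqrt{1+\alpha^2}$, so $S_q\ge\frac{\alpha}{\sqrt{C}(1+\alpha^2)^{3/2}C_1}\ge 2r_1\ge 2S_q$, forcing $S_q\le 0$, a contradiction. Hence for every $0<r\le r_1<S_q$, $y^{-1}(F(U_{r,q}))$ is a graph $\graph f$ with $f:D^m_r\to\reals^n$ and $|Df|\le\alpha$.

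All of the analytic content is already packaged in Lemma~\ref{hessianbound.ambient} and Proposition~\ref{CG.charts}, and the topological inputs used here --- that a submanifold is locally a graph over its tangent plane, and that a graph with gradient bounded away from $\alpha$ extends to a larger disc --- are handled exactly as in Lemma~\ref{arralpha}. The one genuinely new point requiring care is the interplay of the two notions of radius: the Langer chart $U_{r,q}$ is measured by the radius $r$ of its shadow $\pi\bigl(y^{-1}(F(U_{r,q}))\bigr)$ in $\reals^m$, whereas Proposition~\ref{CG.charts}, and therefore Lemma~\ref{hessianbound.ambient}, controls $y_{F(q)}$ only on the ball $D^{m+n}_{r_0}$. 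Keeping the graph inside $D^{m+n}_{r_0}$ through the inclusion $\graph f\subset D^{m+n}_{r\sqrt{1+\alpha^2}}$ is exactly why $r_1$ must also be taken comparable to $r_0/\sqrt{1+\alpha^2}$ --- the role of the first term in the minimum above.
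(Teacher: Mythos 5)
Your proof is correct and takes essentially the same approach as the paper's: transplant the Euclidean argument of Lemma~\ref{arralpha} into a Cheeger--Gromov chart, replacing Lemma~\ref{hessianbound} with Lemma~\ref{hessianbound.ambient}, and run the same fundamental-theorem-of-calculus estimate to force a lower bound on the graph radius. The paper's proof is terse (``the argument in the proof of Lemma~\ref{arralpha} gives a lower bound''), and you have usefully made explicit three points it leaves implicit: the rotation making $dy_0(P_0)=T_{F(q)}F(M)$ so that $Df(0)=0$; the containment $\graph f\subset D^{m+n}_{r\sqrt{1+\alpha^2}}\subset D^{m+n}_{r_0}$, which is exactly the case split $S_q<r_0$ versus $S_q=r_0$ in the paper's write-up; and the continuation argument justifying that $\sup_{D_r}|Df_{r,q}|\to\alpha$ as $r\to S_q^-$ when the graph is not obstructed by the chart boundary. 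No substantive difference in route or ideas.
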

		\begin{proof}
			Let $\alpha>0, q\in M$ be arbitrary.  For small $r$, $y_{F(q)}^{-1}(F(U_{r,q}))$ is a graph over $D_r^m$.  So let $S_q$ be the supremum of such $r$.  If $S_q < r_0$, then the argument in the proof of Lemma \ref{arralpha} gives a lower bound on $S_q$ depending only on $\alpha,C_1,C_2$, and in particular independent of $q$.
			
			If $S_q=r_0$, we can write $y_{F(q)}^{-1}(F(U_{r_0,q}))$ as a graph of some $ f:D_{r_0}^m\rightarrow\reals^n$.  If $|D f|\leq \alpha$, we are done.  If $\sup_{D_{r_0}^m}|D f|> \alpha$, there is some smaller disc $D_r^m$ with $\sup_{D_r^m}|D f|=\alpha$; then the argument in the proof of Lemma \ref{arralpha} gives a lower bound on $S_q$ depending only on $\alpha, C_1,C_2$.
		\end{proof}

\section{Proof of Theorems \ref{geometric.convergence.euclidean} and \ref{geometric.convergence.ambient}}
\subsection{Immersions into $\reals^{m+n}$}
	We now prove the following theorem, which is the special case of Theorem \ref{geometric.convergence.ambient} when $(N_k,h_k)\equiv (\reals^{m+n},dx^2)$.
	
	\begin{theorem}\label{geometric.convergence.euclidean}
			Let $M_k^m$ be a sequence of smooth $m$-manifolds.  Suppose $F_k:(M_k,p_k)\rightarrow(\reals^{m+n},0)$ is an immersion of $M_k$ into $\reals^{m+n}$ such that the second fundamental forms $\II_k$ and their covariant derivatives $\nabla^\ell\II_k$ are bounded pointwise, $1\leq\ell\leq\ell_0$.  Then there exists a smooth $m$-manifold $(M_\infty,p_\infty)$ which admits a sequence of relatively compact open subsets $W_1\subset W_2\subset \cdots$ which exhausts $M_\infty$ and embeddings $\phi_k:(W_k,p_\infty)\hookrightarrow (M_k,p_k)$ such that:
			\begin{enumerate}
				\item $F_k\circ\phi_k$ subconverge in the $C^{\ell_0+1,\gamma}$ topology for any $
					0\leq\gamma<1$ on compact subsets of $M_\infty$ to some $F_\infty:(M_\infty,p_\infty)\rightarrow(\reals^{m+n},0)$.
				\item For each $R>0$, the metric ball $B_k(p_k,R)\subset (M_k,F_k^*dx^2)$ is contained in $\phi_k(W_k)$ for all $k\geq k_0(R)$.
				\item $(M_\infty, F_\infty^*dx^2)$ is a complete Riemannian manifold.
			\end{enumerate}
		In case $M_\infty$ is compact we may take one of the $W_k$ to be $M_\infty$ itself.\end{theorem}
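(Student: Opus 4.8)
The plan is to follow Langer's strategy: present each $F_k$ as a union of graphical pieces over a controlled number of flat discs, obtain uniform $C^{\ell_0+2}$ estimates both for these graphs and for the transition functions between them, and then extract a limit by a diagonal argument on the (finite, at each scale) gluing data. Fix $\alpha\le\sqrt3$. Since the hypotheses give $\sup_{M_k}|\II_k|_g\le C_0$ uniformly in $k$, Lemma \ref{arralpha} produces a single $r>0$, independent of $k$, for which every $F_k$ is an $(r,\alpha)$-immersion. Fix $\rho\le r/2$. By Lemma \ref{balls}, for each $\ell$ the metric ball $B_k(p_k,\ell\rho/2)\subset(M_k,F_k^*dx^2)$ is covered by at most $K^\ell$ Langer charts $U_{p_{k,i}^\ell,\rho/4}$ with centers in $B_k(p_k,\ell\rho)$, the level-$\ell$ collection being contained in the level-$(\ell+1)$ one. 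Normalize $F_k(p_k)=0$ and take the first chart centered at $p_k$. On each $U_{p,\rho/4}$ the map $A_p\circ F_k$ is $\graph f_{k,p}$ for some $f_{k,p}:D_{\rho/4}^m\to\reals^n$ with $Df_{k,p}(0)=0$ and $|Df_{k,p}|\le\alpha$, and the associated parametrization $\Phi_{k,i}:D_{\rho/4}^m\to U_{p_{k,i}^\ell,\rho/4}$ satisfies $F_k\circ\Phi_{k,i}(x)=A_{p_{k,i}^\ell}^{-1}(x,f_{k,i}(x))$.

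For the uniform estimates, Lemma \ref{hessianbound} bounds $|D^2f_{k,p}|$ in terms of $\alpha$ and $|\II_k|_g$, and Lemma \ref{higherderivs} inductively bounds $|D^\ell f_{k,p}|$, $3\le\ell\le\ell_0+2$, in terms of the already-bounded lower derivatives and $|\nabla^{\ell-2}\II_k|_g\le C_{\ell-2}$; order $\ell_0+2$ is admissible precisely because the hypotheses control $\nabla^{\ell_0}\II_k$. Thus the $f_{k,p}$ are bounded in $C^{\ell_0+2}(D_{\rho/4}^m)$ uniformly in $k$ and $p$. For overlapping level-$\ell$ charts the transition map $\tau_{k,ij}^\ell=\Phi_{k,j}^{-1}\circ\Phi_{k,i}$ is built from the $f_{k,i}$ and the Euclidean isometry $A_{p_{k,j}^\ell}A_{p_{k,i}^\ell}^{-1}$; as overlapping charts have base points at bounded $F_k^*dx^2$-distance and tangent planes with relative angle controlled by $C_0$ and $\rho$, these relative isometries stay in a fixed compact subset of the Euclidean group, while $|Df_{k,i}|\le\alpha$ keeps the relevant differentials invertible. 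Hence the $\tau_{k,ij}^\ell$ and their inverses are bounded in $C^{\ell_0+2}$, on slightly shrunk domains, uniformly in $k$; and after the normalization $F_k(p_k)=0$, each family $\{A_{p_{k,i}^\ell}\}_k$ (fixed $\ell$) stays in a compact set.

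Now I would extract the limit. As there are at most $K^\ell$ charts at level $\ell$, I pass to a subsequence along which, for every $\ell$ and every pair $i,j$: whether the two charts overlap is decided; the (slightly shrunk) overlaps and the transition maps $\tau_{k,ij}^\ell$ converge in $C^{\ell_0+1,\gamma}$; and the $f_{k,i}$ and isometries $A_{p_{k,i}^\ell}$ converge. A diagonal argument yields one subsequence doing this for all $\ell$, along which we let a level $\ell(k)\to\infty$. Gluing $K^\ell$ copies of $D_{\rho/4}^m$ by the limiting transition maps $\tau_{\infty,ij}^\ell$ --- which inherit the cocycle condition as a limit of the cocycle conditions on $M_k$ --- yields relatively compact open sets $W_\ell$, nested by the nesting of the covers, with $M_\infty=\bigcup_\ell W_\ell$; one checks that $M_\infty$ is Hausdorff and a $C^{\ell_0+1}$ manifold (hence carries a compatible smooth structure) and that the local maps $x\mapsto A_{\infty,i}^{-1}(x,f_{\infty,i}(x))$ patch through the $\tau_{\infty,ij}^\ell$ --- again as a limit of the identities $A_{p_{k,i}^\ell}^{-1}(x,f_{k,i}(x))=A_{p_{k,j}^\ell}^{-1}(\tau_{k,ij}^\ell(x),f_{k,j}(\tau_{k,ij}^\ell(x)))$ --- to a $C^{\ell_0+1}$ immersion $F_\infty:M_\infty\to\reals^{m+n}$ (each local differential has rank $m$ since $|Df_{\infty,i}|\le\alpha$) with $F_\infty(p_\infty)=0$. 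Since $\tau_{k,ij}^\ell\to\tau_{\infty,ij}^\ell$, a routine perturbation argument upgrades the Langer parametrizations to embeddings $\phi_k:W_{\ell(k)}\hookrightarrow M_k$ with $\phi_k(p_\infty)=p_k$ which in each chart differ from the identity by an error $\to0$ in $C^{\ell_0+1}$; then $F_k\circ\phi_k$ read in the chart $D_{\rho/4}^m$ is $A_{p_{k,i}^\ell}^{-1}(x,f_{k,i}(x))$ up to this error, so $F_k\circ\phi_k\to F_\infty$ in $C^{\ell_0+1,\gamma}$ on compact subsets of $M_\infty$, and $\phi_k(W_{\ell(k)})\supset B_k(p_k,R)$ once $k$ is large (because $\ell(k)\to\infty$).

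Finally, for completeness of $(M_\infty,F_\infty^*dx^2)$: on each Langer chart the induced metric is uniformly comparable to the Euclidean one (again because $|Df_{\infty,i}|\le\alpha$), so every chart has definite intrinsic size and contains a metric ball of radius $\sim\rho$ about its center; transferring via $\phi_k$ and using that $F_k^*dx^2$-distances converge to $F_\infty^*dx^2$-distances, one sees that the level-$\ell$ charts cover $B(p_\infty,\ell\rho/2)\subset M_\infty$. Hence any $F_\infty^*dx^2$-bounded set lies in $W_\ell$ for some $\ell$, so it is relatively compact, and every Cauchy sequence converges: the metric is complete. If $M_\infty$ is compact the exhaustion stabilizes, $W_\ell=M_\infty$ for large $\ell$, and this serves as the desired $W_k$. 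I expect the gluing step to be the main obstacle: one must arrange the successive passages to subsequences so that the combinatorial overlap pattern and every transition map converge coherently at all scales simultaneously, check that the glued object is Hausdorff and that the local data are genuinely compatible on overlaps, and convert the ``charts agree in the limit'' information into honest embeddings $\phi_k$ --- the estimates of Lemmas \ref{hessianbound} through \ref{balls} are exactly what make each of these pieces go through, but assembling them coherently takes care.
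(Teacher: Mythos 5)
Your plan is the same Langer-chart strategy the paper uses: cover metric balls with $(r,\alpha)$-charts of definite size, get uniform $C^{\ell_0+2}$ bounds on the graph functions via Lemmas \ref{hessianbound}--\ref{higherderivs}, and extract a diagonal subsequence on the chart data. Two things to flag, one a genuine gap, one a stylistic divergence.

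The gap is the choice $\alpha\le\sqrt3$. That constraint is what Lemma \ref{balls} needs, but it is not nearly small enough for the step where you assert that $|Df_{k,i}|\le\alpha$ ``keeps the relevant differentials invertible.'' Re-expressing the $i$-th graph over the $j$-th reference plane composes a rotation of angle up to $2\arctan\alpha$ (the spread of tangent planes between the two chart centers) with a graph whose slope is already up to $\arctan\alpha$; with $\alpha$ near $\sqrt3$ the tangent plane can become vertical relative to the new reference plane and the transition map degenerates. The paper takes $\alpha<\tfrac1{10}$ precisely so that this never happens, and the same quantitative smallness is also what makes the argument that $A_{k'}^i\circ(A_k^j)^{-1}(\graph f_k^j)$ is still a graph go through. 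You should impose a concrete smallness constraint on $\alpha$ (depending only on $m,n$) and use it where you currently wave at invertibility.

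The other point is the construction of $M_\infty$. You glue $K^\ell$ abstract copies of the disc by limiting transition maps, and then must verify the cocycle condition and Hausdorffness of the quotient, and separately manufacture the embeddings $\phi_k$ by a perturbation argument. The paper instead observes that, once the isometries $A_k^i$ are mutually close (for $k,k'\ge k_0(\ell)$), the sets $W_{\ell,k}$ and $W_{\ell,k'}$ are already diffeomorphic as open subsets of manifolds, so one can take $W_\ell$ to literally be one of the $W_{\ell,k}\subset M_{k_0}$; the maps $\phi_{\ell,k}$ are the resulting identifications and a diagonal subsequence gives $\phi_k$ directly. This sidesteps the Hausdorffness check and the perturbation step. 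Both routes reach the same place, but the paper's is cleaner; if you keep the abstract gluing, the cocycle/Hausdorff verification is not something you should leave as ``one checks'' --- it is where the uniform transition-map bounds are actually spent.
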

	
	Before proving this theorem, we note that given Lemmas \ref{arralpha} and \ref{balls}, the proof is essentially finished already.  This is because we have shown we can choose a parametrization, at least on Langer charts of a definite positive size, in which each immersion is the graph a function which has small first derivative and bounded higher derivatives; thus Arzela-Ascoli guarantees convergence on each Langer chart. By passing to a subsequence we can add Langer charts so that the convergence agrees on overlaps.  The following merely formalizes this argument. 
	
	\begin{proof}	
	Let $\alpha<\frac{1}{10}$, $r$ given by the lemmas, and $\delta=\frac{r}{10}$.
	
	For each $\ell\in \mathbb{N}$, define
		\begin{equation}
			W_{\ell,k}=\bigcup_{i=1}^{K^\ell} U_{q_k^i,\frac{3\delta}{4}}
		\end{equation}
	where the $q_k^i$ are those points, given by Lemma \ref{balls}, for which $B(p_k,\ell\frac{\delta}{2})\subset \bigcup U_{q_k^i,\frac{\delta}{4}}$.  Let $U_k^i=U_{q_k^i,\delta}$ and $\tilde{U}_k^i=U_{q_k^i,\frac{3\delta}{4}}$.
	
	Fix $\ell$.  For each $1\leq i\leq K^\ell$, $|F_k(q_k^i)|\leq d_k(q_k^i,p_k)\leq \ell\delta$.  Lemma \ref{arralpha} gives Euclidean isometries $A_k^i$ which take $F_k(q_k^i)$ to the origin and $T_{F_k(q_k^i)}F_k(M_k)$ to $\reals^m\times\{0\}$.  Since these Euclidean isometries are bounded, a subsequence of them must converge, for each $i$, to some $A_\infty^i$, which is a Euclidean isometry; moreover since there are finitely many $i$ for a fixed $\ell$, this convergence may be taken to be uniform in $i$.  In particular the $A_k^i$ are a Cauchy sequence.
	
	Lemma \ref{arralpha} also produces $f_k^i:D_\delta^m\rightarrow\reals^n$ so that $\graph f_k^i=A_k^i\circ F_k(U_k^i)$, so that $\left|Df_k^i\right|\leq \alpha$ and $\left|D^2f_k^i\right|\leq \left(1+\alpha^2\right)^\frac{3}{2}C$.  Since the $\{A_k^i\}$ are a Cauchy sequence, for $k,k'$ large enough depending on $\epsilon$, $A_k^i\circ (A_k^i)^{-1}$ is $\epsilon$-close to the identity on $\reals^{m+n}$.  Thus we may take $k,k'$ large enough that $A_{k'}^i\circ (A_k^i)^{-1} (\graph f_k^i|_{D_{\frac{3\delta}{4}}})$ is a graph over $D_\frac{3\delta}{4}$. For any two indices $i,j\leq K^\ell$ with $U_k^i\cap U_k^j$ nonempty, we have that 
		\begin{equation}\begin{aligned}
			A_k^j\circ (A_k^i)^{-1} (\graph f_k^i|_{\pi \circ A_k^i(F_k(U_k^i\cap U_k^j))})=\graph f_k^j|_{\pi \circ A_k^j(F_k(U_k^i\cap U_k^j))}
			\end{aligned}.
		\end{equation}
		  
	We can therefore take $k,k'$ large enough so that 
		\begin{equation}
			A_{k'}^i\circ (A_k^j)^{-1} (\graph f_k^j|_{\pi\circ A_k^j(F_k(\tilde{U}_k^i\cap \tilde{U}_k^j))})
		\end{equation} 
	is a graph over $D_{\frac{3\delta}{4}}\cap (\pi\circ A_{k'}^j(F_{k'}(\tilde{U}_{k'}^i\cap\tilde{U}_{k'}^j)))$.
	
	The previous paragraph allows us to choose $k_0(\ell)$ so that for any $k,k'\geq k_0$, $F_k(W_{\ell,k})$ is a graph over $F_{k'}(W_{\ell,k'})$.  In particular, $W_{\ell,k}$ and $W_{\ell,k'}$ are diffeomorphic and we can write $W_\ell$ unambiguously.  We write $\phi_{\ell,k}$ for the identification of $W_\ell$ with $W_{\ell,k}\subset M_k$.  By construction, $W_\ell\subset W_{\ell+1}$.  In fact, since the $M_k$ are without boundary, for each $W_\ell$ we have $\overline{W}_\ell\subset W_{\ell'}$ for some $\ell'> \ell$.  We therefore can pass to a subsequence so that $\overline{W}_\ell\subset W_{\ell+1}$.
	
	By construction it is clear that $\phi_{\ell+1,k}|_{W_\ell}=\phi_{\ell,k}$, so we can pass to a diagonal subsequence $\phi_k=\phi_{k,k}:W_k\hookrightarrow M_k$.  Setting $M_\infty=\bigcup_{k=1}^\infty W_k$, we have the claimed $M_\infty$ exhausted by the sequence $\{W_k\}$.

	We now prove the convergence of $F_k\circ \phi_k$ on compact sets in $C^{\ell_0+1,\gamma}(M_\infty,\reals^{m+n})$ for any $0\leq\gamma<1$.   Given a compact set $C\subset M_\infty$, $C$ is contained in some $W_K$.  $F_k(\phi_k(W_k))=F_k(W_{k,k})$ is a graph over $F_{k_0}(W_{k_0,k_0})$ for $k\geq k_0(\ell)$; moreover by construction the function of which it is a graph has first derivative bounded by $2\alpha$ and higher derivatives up to order $\ell_0+2$ bounded by Lemma \ref{higherderivs}.  Thus by Arzela-Ascoli, the $F_k\circ\phi_k$ converge in $C^{\ell_0+1,\gamma}(W_\ell,\reals^{m+n})$ for any $0\leq \gamma<1$.  The limit maps $F_\infty:W_\ell\rightarrow \reals^{m+n}$, by construction, agree.  So we have the claimed $F_\infty:M_\infty\rightarrow\reals^{m+n}$. This completes the proof of Theorem \ref{geometric.convergence.euclidean}.
	\end{proof}

	\begin{remark}
	The Cheeger-Gromov charts given by Proposition \ref{CG.charts} are exactly analogous to the Langer charts $U_{r,q}$.  The injectivity bound is used by Cheeger-Gromov to ensure these charts can be taken to be of a definite size; here we are able to exploit, via Lemma \ref{hessianbound}, the bound on $\II$ to achieve this purpose.
	
	In fact we have
		\begin{corollary}[to the proof of Theorem \ref{geometric.convergence.euclidean}]\label{injectivity} The injectivity radius of the induced metric $F^*h$ of an immersed submanifold $F:M\rightarrow(N,h)$ is bounded below:\begin{align*}
				\inj(M,F^*h)\geq \frac{C}{\sup_M\left|\II\right|}
			\end{align*}
			where $C$ depends on the injectivity radius of $(N,h)$.  In case $(N,h)=(\reals^{m+n},dx^2)$, $C=\frac{1}{2\sqrt{2}}$.
		\end{corollary}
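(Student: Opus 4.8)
The estimate is read off from the construction behind Theorem~\ref{geometric.convergence.euclidean} and its ambient analogue, so the proof only localizes that construction at a single point. Fix $q\in M$. Applying Lemma~\ref{arralpha} (Euclidean case) or Lemma~\ref{arralpha.ambient} (general case) with the choice $\alpha=1$ produces, for $r=\frac{1}{2\sqrt2}\,\frac{1}{\sup_M\left|\II\right|}$ in the Euclidean case (respectively the corresponding $r_1$ in general), a Langer chart $U_{r,q}$ realized as a graph $\graph f_q$ over $D_r^m$ with $Df_q(0)=0$ and $\left|Df_q\right|\le 1$. By \eqref{inveigen} (respectively \eqref{ambient.eigenvalue1}--\eqref{ambient.eigenvalue2}), the induced metric $g=F^*h$, transported to $D_r$ by this graph parametrization, has eigenvalues between $1$ and $1+\alpha^2=2$ relative to the flat metric; in particular $\dist_g\ge\dist_{\mathrm{eucl}}$ on $U_{r,q}$, so a unit-speed $g$-geodesic issuing from $q$ cannot leave $U_{r,q}$ before parameter time $r$. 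Hence $\Exp_q$ is defined on $B(0,r)\subset T_qM$ with image inside $U_{r,q}$.

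It then suffices to show $\Exp_q$ is a diffeomorphism from $B(0,r)$ onto its image. For the absence of conjugate points, the Gauss equation bounds the sectional curvature of $(M,g)$ two-sidedly by a fixed multiple of $(\sup_M\left|\II\right|)^2$ (together with the ambient curvature, in the general case), and since $r$ is far below $\pi/\sup_M\left|\II\right|$, Rauch comparison makes $\Exp_q$ a local diffeomorphism on $B(0,r)$ whose differential is uniformly close to an isometry there. For injectivity, I would compose with $\Psi=\pi\circ A_q\circ F\colon U_{r,q}\to D_r$ (respectively with the Cheeger--Gromov chart $y_{F(q)}$ of Proposition~\ref{CG.charts}); since $\left|Df_q\right|\le\alpha$ forces $d\Psi$ on the tangent bundle of the graph to lie within operator distance $<1$ of the identity, the composite $\Psi\circ\Exp_q\colon B(0,r)\to D_r$ has differential everywhere within distance $<1$ of the identity. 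A short mean-value/monotonicity estimate then gives $\left|\Psi(\Exp_q v)-\Psi(\Exp_q w)\right|\ge c\,|v-w|$ with $c>0$ for all $v,w\in B(0,r)$, so $\Exp_q$ is injective. This yields $\inj(q)\ge r$ uniformly in $q$, which is the claim; the constant $\frac{1}{2\sqrt2}$ is exactly the value of $\alpha(1+\alpha^2)^{-3/2}$ at $\alpha=1$, i.e.\ the Langer radius of Lemma~\ref{arralpha} with $\alpha=1$. In the general ambient case the Langer radius, and hence the constant, depends on the size $r_0$ of the Cheeger--Gromov charts, which depends on $\inj(N,h)$ and on the ambient curvature bounds.

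The main obstacle is the quantitative form of the injectivity step: one must check that the differential of $\Psi\circ\Exp_q$ really stays strictly inside the unit ball around the identity \emph{throughout} $B(0,r)$, which means pairing the Jacobi-field control of $d\Exp_q$ (valid because $r\ll\pi/\sup\left|\II\right|$, via the two-sided curvature bound from the Gauss equation) against the metric comparison $1\le g\le 2$ on the Langer chart coming from $\left|Df_q\right|\le 1$, and verifying that the arithmetic closes up with $\alpha=1$. A less sharp but dimension-free alternative avoids the Langer chart altogether: bound the conjugate radius by $\pi/\sup\left|\II\right|$ through the Gauss equation, bound the length of the shortest geodesic loop at $q$ from below by applying Fenchel's theorem to its image in the ambient space (whose geodesic curvature is $\left|\II(\gamma',\gamma')\right|\le\sup\left|\II\right|$, with the single corner contributing exterior angle at most $\pi$), and combine these via Klingenberg's identity $\inj(q)=\min(\operatorname{conj}(q),\tfrac12\ell_q)$.
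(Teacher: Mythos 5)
Your proposal follows the paper's own route — fix $q$, apply Lemma~\ref{arralpha} with $\alpha=1$ so that $r=\frac{1}{2\sqrt2}\frac{1}{\sup_M\left|\II\right|}$, and use the graph chart $U_{r,q}$ — but you correctly notice that the paper's final step is elided.  The paper's proof is literally: $B(q,r)\subset U_{r,q}$ is a graph over the tangent plane, ``thus $\inj(q)\geq r$.''  Containing the metric ball in a disc-shaped graph chart does not, by itself, make $\Exp_q$ a diffeomorphism onto that ball; one still has to rule out conjugate points on $B(0,r)\subset T_qM$ and establish global injectivity of $\Exp_q$ there.  You name exactly these two missing steps and you are right to flag them.

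Your primary route (compose $\Exp_q$ with $\Psi=\pi\circ A_q\circ F$, bound $d\Psi$ on the graph using $\left|Df\right|\le 1$, bound $d\Exp_q$ by Rauch using $|K_g|\le 2(\sup\left|\II\right|)^2$ from the Gauss equation, then run a mean-value estimate) is in the right spirit, but as you yourself concede, the arithmetic is not verified.  In fact a back-of-envelope check is worrying: writing $c(t)=\Psi(\Exp_q(tv))$, the geodesic equation gives $|\ddot c|\lesssim\left|D^2f\right|\left|Df\right|\le 2\sqrt 2\sup\left|\II\right|$, and over time $r=\frac{1}{2\sqrt 2\sup\left|\II\right|}$ this allows $\dot c$ to rotate by order one, so the naive contraction estimate does not obviously close at $\alpha=1$ without re-optimizing $\alpha$.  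So this route, as sketched, does not yet prove the stated constant.

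Your alternative — Klingenberg's lemma plus the Gauss equation plus Fenchel — is the cleaner argument and, in my view, the one that should actually appear.  Gauss gives $\operatorname{conj}(q)\ge\frac{\pi}{\sqrt2\sup\left|\II\right|}$.  If $\inj(q)<\operatorname{conj}(q)$, Klingenberg produces a geodesic loop at $q$ of length $2\inj(q)$, smooth except at the single vertex $q$.  Its image under $F$ in $\reals^{m+n}$ has pointwise curvature $\left|\II(\dot\gamma,\dot\gamma)\right|\le\sup\left|\II\right|$ and one exterior angle $\le\pi$, so Fenchel gives $2\inj(q)\sup\left|\II\right|+\pi\ge 2\pi$, i.e.\ $\inj(q)\ge\frac{\pi}{2\sup\left|\II\right|}$ — a \emph{better} constant than $\frac{1}{2\sqrt2}$.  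The only caveat is that Klingenberg requires completeness (or careful localization to the Langer chart) to guarantee the loop exists; for compact $M$, which is the setting of Theorem~\ref{geometric.convergence.ambient}, this is automatic.  Two minor remarks: $d\Psi$ is not literally ``within distance $<1$ of the identity'' — it is a projection whose singular values lie in $[\tfrac{1}{\sqrt2},1]$ — so the phrasing of the monotonicity step needs care; and in the general ambient case $C$ depends not only on $\inj(N,h)$ as the corollary states but also on the ambient curvature bounds through $r_0$ in Proposition~\ref{CG.charts}, a point you correctly note.
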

		\begin{proof}
			We prove the Euclidean case; the general case is similar.
			Taking $\alpha=1$ and $r$ given by Lemma \ref{arralpha}, we have for any $q\in M$ that $B(q,r)\subset U_{r,q}$ is a graph over the tangent plane at $F(q)$.  Thus $\inj(q)\geq r$.
		\end{proof}
	
	\end{remark}
\subsubsection{General Case}
	The proof of Theorem \ref{geometric.convergence.ambient} proceeds along the same lines as in Theorem \ref{geometric.convergence.euclidean}, using the lemmas in section \ref{section.ambient.construction} in place of those in section \ref{section.euclidean.construction}.  $M_\infty$ is constructed as the union of limits of Langer charts and $N_\infty$ is constructed as the union of limits of Cheeger-Gromov charts.
	
	\begin{remark} We could prove convergence as above given good-enough integral bounds ($L^p, p>m$) on the second fundamental form, as in \cite{langer85}. In the first inequality of (\ref{fundamental.theorem}), we would need to use the Sobolev inequality instead of the fundamental theorem.\end{remark}
	
\section{Application: Topological Finiteness}\label{section.finiteness}
	Before considering applications of Theorem \ref{geometric.convergence.ambient} to the mean curvature flow, which is our main purpose for it, we discuss in this section some topological finiteness theorems which may be of independent interest.

	We begin by relating $C^{\ell,\alpha}$ geometric convergence in the sense of Theorem \ref{geometric.convergence.ambient} to convergence in the function space $C^{\ell,\alpha}(M,N)$.
	
	\begin{proposition}\label{convergences}
		Let $\{M_k\}$ be a family of smooth $m$ manifolds and $\{N_k\}$ a family of smooth $m+n$ manifolds.  If the Riemannian immersions $F_k:(M_k,p_k)\rightarrow (N_k,h_k,x_k)$ converge in $C^{\ell,\alpha}$ in the geometric sense to $F_\infty:(M_\infty,p_\infty)\rightarrow (N_\infty,h_\infty,x_\infty)$, with $M_\infty$ and $N_\infty$ compact, then $\psi_k^{-1}\circ F_k\circ \phi_k$ converge to $F_\infty$ in $C^{\ell,\alpha}(M_\infty,N_\infty)$.
	\end{proposition}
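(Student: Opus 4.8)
The plan is to exploit the fact that when $M_\infty$ and $N_\infty$ are compact the clause ``on compact subsets'' in the definition of geometric convergence is vacuous, so that the only real point is to reconcile the \emph{extrinsic} $C^{\ell,\alpha}$ topology --- maps into $\reals^K$ via a fixed isometric embedding $\iota\colon N_\infty\hookrightarrow\reals^K$, which is the topology used in Theorem \ref{geometric.convergence.ambient} --- with the \emph{intrinsic} topology of the mapping space $C^{\ell,\alpha}(M_\infty,N_\infty)$. First I would observe that $\{W_k\}$ and $\{V_k\}$ are exhaustions of the compact manifolds $M_\infty$ and $N_\infty$ by increasing open sets, so $W_k=M_\infty$ and $V_k=N_\infty$ for all $k\geq k_0$; for such $k$ the maps $\phi_k\colon M_\infty\hookrightarrow M_k$ and $\psi_k\colon N_\infty\hookrightarrow N_k$ are embeddings of the full limit manifolds, and by part (4) of Theorem \ref{geometric.convergence.ambient} the composite $G_k:=\psi_k^{-1}\circ F_k\circ\phi_k$ is a well-defined smooth map $M_\infty\to N_\infty$. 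Geometric convergence asserts $G_k\to F_\infty$ in $C^{\ell,\alpha}$ on compact subsets of $M_\infty$, so since $M_\infty$ is itself compact this says precisely that $\iota\circ G_k\to\iota\circ F_\infty$ in $C^{\ell,\alpha}(M_\infty,\reals^K)$, with $M_\infty$ carrying a fixed background metric.

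To upgrade this to convergence in $C^{\ell,\alpha}(M_\infty,N_\infty)$, I would fix a tubular neighbourhood $\mathcal{O}$ of the compact submanifold $\iota(N_\infty)\subset\reals^K$ together with its smooth nearest-point projection $\varrho\colon\mathcal{O}\to\iota(N_\infty)$; both exist by compactness of $\iota(N_\infty)$, and $\varrho$ has bounded derivatives of every order on a slightly smaller neighbourhood. For $k$ large $\iota\circ G_k$ takes values in $\iota(N_\infty)\subset\mathcal{O}$, hence $G_k=\iota^{-1}\circ\varrho\circ(\iota\circ G_k)$ and likewise $F_\infty=\iota^{-1}\circ\varrho\circ(\iota\circ F_\infty)$. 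Composing the $C^{\ell,\alpha}(M_\infty,\reals^K)$-convergent sequence $\iota\circ G_k$ with the fixed smooth map $\iota^{-1}\circ\varrho$, and invoking the composition estimate $\|g\circ h\|_{C^{\ell,\alpha}}\leq C\bigl(\|g\|_{C^{\ell+1}},\|h\|_{C^{\ell}}\bigr)\bigl(1+\|h\|_{C^{\ell,\alpha}}\bigr)$ for maps between Euclidean domains, then yields $G_k\to F_\infty$ when read in the charts of any fixed finite atlas of $N_\infty$, which is exactly convergence in $C^{\ell,\alpha}(M_\infty,N_\infty)$.

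The one step carrying real content is this last one, namely that the extrinsic $C^{\ell,\alpha}$ topology on maps valued in $\iota(N_\infty)$ agrees with the intrinsic topology of $C^{\ell,\alpha}(M_\infty,N_\infty)$. This is standard for mapping spaces into compact manifolds, but it does require assembling (i) the tubular neighbourhood and the smoothness of $\varrho$, (ii) the chain-rule/composition inequality in H\"older spaces (obtained from the ordinary chain rule together with interpolation to control the H\"older seminorm), and (iii) a covering argument to pass between the single global Euclidean picture and a finite atlas of $N_\infty$. Once these are in hand, the proposition is just the remark that for a compact $M_\infty$ the phrase ``compact subset of $M_\infty$'' imposes no restriction.
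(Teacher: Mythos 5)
Your first paragraph reproduces the paper's entire proof: because $M_\infty$ and $N_\infty$ are compact, the exhausting sequences satisfy $W_k=M_\infty$ and $V_k=N_\infty$ for $k$ large, so $\psi_k^{-1}\circ F_k\circ\phi_k$ is a globally defined map $M_\infty\to N_\infty$ and conclusion (5) of Theorem \ref{geometric.convergence.ambient} gives the result outright.

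The remainder of your argument --- the tubular-neighbourhood projection, the composition estimate in H\"older norms, and the passage to a finite atlas on $N_\infty$ --- addresses a real mathematical point, but one that does not arise under the paper's conventions. The paper explicitly \emph{defines} the $C^{\ell,\alpha}(M,N)$ topology by isometrically embedding $N$ into some $\reals^K$ and equipping $M$ with a background metric; so $C^{\ell,\alpha}(M_\infty,N_\infty)$ is by fiat the extrinsic topology, not the intrinsic chart-based one, and there is nothing to reconcile. If one instead adopts the standard intrinsic definition of the mapping space, your reduction via the nearest-point retraction $\varrho$ is the right way to show the two topologies coincide on a compact target, and it is essentially the content that the paper is silently invoking when it treats its $\reals^K$-embedding definition as interchangeable with the usual one. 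So your argument is a superset of the paper's: same key observation about compactness collapsing the exhaustion, plus a careful (and correct, though here superfluous given the paper's definitions) verification that the extrinsic and intrinsic $C^{\ell,\alpha}$ topologies agree.
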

	\begin{proof}
		Since $M_\infty$ and $N_\infty$ are compact, $M_\infty=W_k$ and $N_\infty=V_k$ in the tail of the sequence; thus it makes sense to consider $\psi_k^{-1}\circ F_k\circ \phi_k$ in $C^{\ell,\alpha}(M_\infty,N_\infty)$.  Then Theorem \ref{geometric.convergence.ambient} gives the result.
	\end{proof}

The implicit function theorem gives the following, which says that the set of immersions which are regularly homotopic to a given immersion is open in $C^{1,\alpha}$.

	\begin{proposition}\label{regular.homotopy.open}
		Let $M^m,N^{m+n}$ be smooth compact manifolds, $F\in C^{1,\alpha}(M,N)$ an immersion.  Then there is $\epsilon(F)>0$ such that $||G-F||_{C^{1,\alpha}}\leq \epsilon$ implies that $G$ is an immersion, which is regular-homotopic to $F$ through $C^{1,\alpha}$ immersions.
	
		In particular, the intersection of $C^{1,\gamma}(M,N)$ with each regular-homotopy class is open in $C^{1,\gamma}(M,N)$.
	\end{proposition}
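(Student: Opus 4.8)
The plan is to build the regular homotopy between $F$ and a nearby $G$ by straightening in an ambient Euclidean space and retracting back to $N$. Isometrically embed $(N,h)\hookrightarrow\reals^K$, and let $\mathcal{U}_\rho=\{z\in\reals^K:\dist(z,N)<\rho\}$ be a tubular neighbourhood on which the nearest-point retraction $\pi_N\colon\mathcal{U}_\rho\to N$ is a well-defined smooth map; since $M$ is compact, $F(M)\subset N$ is compact and such a $\rho>0$ exists (this is where the implicit function theorem enters, via the normal exponential map). Recall that for $z\in N$ the differential $d(\pi_N)_z\colon\reals^K\to T_zN$ is the orthogonal projection with kernel the normal space $\nu_z$, and that $z\mapsto d(\pi_N)_z$ is continuous on $\mathcal{U}_\rho$.

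First I would record the openness of the immersion condition. Because $M$ is compact and $dF_p$ is injective for every $p$, there is $\sigma>0$ with $|dF_p(v)|\geq\sigma|v|$ for all $p\in M$, $v\in T_pM$. If $\|G-F\|_{C^{1,\alpha}}\leq\epsilon$ then in particular $\|G-F\|_{C^1}\leq\epsilon$, so $|dG_p-dF_p|\leq\epsilon$ in operator norm and $|G(p)-F(p)|\leq\epsilon$ for all $p$; hence for $\epsilon<\min(\sigma,\rho)$ every such $G$ is an immersion with $G(M)\subset\mathcal{U}_\rho$. For such $\epsilon$ and $p\in M$, $t\in[0,1]$ put $\gamma_p(t)=(1-t)F(p)+tG(p)$; every point of this segment lies within distance $\epsilon<\rho$ of $F(p)\in N$, so $\gamma_p(t)\in\mathcal{U}_\rho$, and we may define $F_t=\pi_N\circ\gamma(\cdot,t)\colon M\to N$. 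Since $\pi_N$ is smooth and $C^{1,\alpha}(M,\reals^K)$ is closed under multiplication by $C^\infty$ functions and under composition with smooth maps, each $F_t$ lies in $C^{1,\alpha}(M,N)$, the path $t\mapsto F_t$ is continuous (in fact Lipschitz) into $C^{1,\alpha}(M,N)$, and $F_0=F$, $F_1=G$.

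It remains to check that $F_t$ is an immersion for every $t$, and this is the step requiring care. By the chain rule,
\[
 d(F_t)_p=d(\pi_N)_{\gamma_p(t)}\circ\delta_p(t),\qquad \delta_p(t)=(1-t)\,dF_p+t\,dG_p .
\]
For $\epsilon<\sigma$ the map $\delta_p(t)$ is injective and lies within $\epsilon$ of $dF_p$, so its image $\delta_p(t)(T_pM)$ is an $m$-plane within $O(\epsilon)$ of $dF_p(T_pM)\subseteq T_{F(p)}N$; likewise $\ker d(\pi_N)_{\gamma_p(t)}$ lies within $O(\epsilon)$ of $\nu_{F(p)}$. Since $T_{F(p)}N\oplus\nu_{F(p)}=\reals^K$ and transversality of linear subspaces is an open condition, for $\epsilon$ small enough — uniformly in $p$ and $t$, using compactness of $M\times[0,1]$ and continuity of the Grassmannian-valued maps involved — we get $\delta_p(t)(T_pM)\cap\ker d(\pi_N)_{\gamma_p(t)}=\{0\}$, so $d(F_t)_p$ is injective. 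This yields the desired $\epsilon(F)>0$; the main obstacle is precisely this uniform transversality estimate, although it is routine once phrased as above.

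Finally, the last assertion is immediate. Regular homotopy through $C^{1,\gamma}$ immersions is an equivalence relation on the immersions in $C^{1,\gamma}(M,N)$, and the argument above goes through verbatim with $\gamma$ in place of $\alpha$: only the $C^1$ norm was used in the immersion and transversality estimates, and the $C^{1,\gamma}$-regularity of the maps $F_t$ and the continuity of $t\mapsto F_t$ are preserved exactly as in the $C^{1,\alpha}$ case. Hence each such class contains a $C^{1,\gamma}$-ball about each of its points and is therefore open in $C^{1,\gamma}(M,N)$.
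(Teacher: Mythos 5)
Your proof is correct, and it fills in exactly what the paper leaves implicit: the paper simply attributes the proposition to the implicit function theorem without giving an argument, and your tubular-neighbourhood construction (with $\pi_N$ furnished by the IFT/normal exponential map, the straight-line homotopy in $\reals^K$, and the uniform transversality estimate for the image of $\delta_p(t)$ against $\ker d(\pi_N)$ over the compact set $M\times[0,1]$) is the standard way to carry that out. The only cosmetic point is that in the statement $G$ is taken in $C^{1,\alpha}(M,N)$, so $G(M)\subset N$ automatically and $\pi_N\circ G=G$, which is what makes $F_1=G$; your phrasing ``$G(M)\subset\mathcal{U}_\rho$'' suggests you were hedging against $G$ being merely $\reals^K$-valued, in which case $F_1$ would be $\pi_N\circ G$, not $G$.
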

	
	We now apply Theorem \ref{geometric.convergence.ambient} and Propositions \ref{convergences} and \ref{regular.homotopy.open} to obtain a topological finiteness theorem, somewhat analogous to the results in Cheeger's thesis \cite{cheegerthesis}.  We make the following definitions to allow us to state the finiteness theorem.
	
	\begin{definition}
		Two immersions $F,G:M\rightarrow N$ are {\em conjugate regular homotopic} if there exist diffeomorphisms $\phi:M\rightarrow M$ and $\psi:N\rightarrow N$ so that $\psi^{-1} \circ F\circ \phi$ is regular isotopic to $G$. Two embeddings $F,G:M\hookrightarrow N$ are {\em conjugate ambient isotopic} if there exist diffeomorphisms $\phi,\psi$ so that $\psi^{-1}\circ F\circ \phi$ is ambient isotopic to $G$.
	\end{definition}
	
	\begin{theorem}\label{regular.homotopy.finite}
		Consider the class $\mathcal{F}$ of immersions $F:M\rightarrow (N,h)$ which satisfy, for some $C_1,C_2,C_3,C_4,\eta$,\begin{itemize}
				\item $\vol(N,h)\leq C_1$, $\inj(N,h)\geq \eta$, $\left|\Rm(N,h)\right|\leq C_2$
				\item $\vol(M,F^*h)\leq C_3$, $\left|\II(F(M),h)\right|\leq C_4$
			\end{itemize}
		Then there are finitely many diffeomorphism types of $M$, finitely many diffeomorphism types of $N$, and finitely many conjugate regular homotopy classes of $F$ represented in $\mathcal{F}$.
	\end{theorem}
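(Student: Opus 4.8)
The plan is to argue by contradiction: if infinitely many diffeomorphism types of $M$, or of $N$, or infinitely many conjugate regular homotopy classes of $F$ were represented in $\mathcal{F}$, then we could choose a sequence $F_k:M_k\looparrowright(N_k,h_k)$ in $\mathcal{F}$ exhausting those infinitely many types/classes, apply Theorem \ref{geometric.convergence.ambient}, and derive a contradiction from the resulting convergence. The first task is to observe that the hypotheses of Theorem \ref{geometric.convergence.ambient} are met: $|\Rm(N_k,h_k)|\leq C_2$ and $\inj(N_k,h_k)\geq\eta$ give the $\ell_0=0$ ambient bounds, and $|\II(F_k(M_k),h_k)|\leq C_4$ gives the $\ell_0=0$ bound on the second fundamental form (we take $\ell_0=0$, so we only need $C^0$ bounds on $\Rm$ and $\II$ and obtain $C^{1,\gamma}$ geometric convergence). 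After choosing basepoints $p_k\in M_k$, $x_k=F_k(p_k)$, pass to a subsequence converging in $C^{1,\gamma}$ in the geometric sense to $F_\infty:(M_\infty,p_\infty)\looparrowright(N_\infty,h_\infty,x_\infty)$.

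The key step is to upgrade the \emph{a priori} noncompact limits $M_\infty$ and $N_\infty$ to compact ones, using the volume bounds. For the ambient manifolds: by the Bishop--Gromov inequality, the lower bound on $\inj(N_k,h_k)$ together with $|\Rm|\leq C_2$ forces a uniform lower bound on $\vol B(x_k,\rho)$ for each fixed small $\rho$; combined with $\vol(N_k,h_k)\leq C_1$ this bounds $\diam(N_k,h_k)$ uniformly, so for $R$ large enough $B(x_k,R)=N_k$ for all $k$, and then item (2) of Theorem \ref{geometric.convergence.ambient} shows the embeddings $\psi_k:V_k\hookrightarrow N_k$ are eventually surjective; hence $N_\infty$ is compact and diffeomorphic to $N_k$ for all large $k$. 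For the domain manifolds one argues similarly with the induced metrics: Corollary \ref{injectivity} gives $\inj(M_k,F_k^*h_k)\geq C/C_4$, and $|\II|\leq C_4$ together with the ambient curvature bound controls the intrinsic curvature of $(M_k,F_k^*h_k)$ via the Gauss equation, so Bishop--Gromov again gives a uniform lower volume bound on intrinsic balls of a fixed radius; with $\vol(M_k,F_k^*h_k)\leq C_3$ this bounds the intrinsic diameter, and item (1) of Theorem \ref{geometric.convergence.ambient} makes $\phi_k:W_k\hookrightarrow M_k$ eventually surjective, so $M_\infty$ is compact and diffeomorphic to $M_k$ for all large $k$.

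Once $M_\infty$ and $N_\infty$ are compact, $M_\infty=W_k$ and $N_\infty=V_k$ in the tail, so Proposition \ref{convergences} applies: $\psi_k^{-1}\circ F_k\circ\phi_k\to F_\infty$ in $C^{1,\gamma}(M_\infty,N_\infty)$. Now for $k$ large enough, $\|\psi_k^{-1}\circ F_k\circ\phi_k-F_\infty\|_{C^{1,\gamma}}\leq\epsilon(F_\infty)$, where $\epsilon(F_\infty)$ is the constant from Proposition \ref{regular.homotopy.open} (noting $F_\infty$ is an immersion since $|DF_\infty|$ and the second fundamental form are controlled, or simply because $C^{1,\gamma}$ convergence of immersions with the induced metrics bounded away from degenerate preserves the immersion property). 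Hence for all large $k$, $\psi_k^{-1}\circ F_k\circ\phi_k$ is regular-homotopic to $F_\infty$ through immersions; since $\phi_k,\psi_k$ are diffeomorphisms, $F_k$ is conjugate regular homotopic to $F_\infty$ — and in particular all the $F_k$ with $k$ large are conjugate regular homotopic to each other. This contradicts the assumption that the $F_k$ were chosen from infinitely many distinct conjugate regular homotopy classes (and the same subsequence argument contradicts infinitely many diffeomorphism types of $M$ or of $N$).

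The main obstacle is the diameter control needed to pass from the noncompact limits of Theorem \ref{geometric.convergence.ambient} to genuinely compact $M_\infty$ and $N_\infty$; this is where the volume bounds $C_1$ and $C_3$ — absent from Theorem \ref{geometric.convergence.ambient} itself — are essential, and it requires combining Bishop--Gromov-type lower volume bounds (fed by the injectivity and curvature bounds, with the Gauss equation used on the domain side) with the hypothesized upper volume bounds. Everything after that is a formal consequence of Propositions \ref{convergences} and \ref{regular.homotopy.open}.
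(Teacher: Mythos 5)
Your proposal is correct, but it routes through a different intermediate structure than the paper. The paper does not argue by contradiction from Theorem \ref{geometric.convergence.ambient}: it first derives the diameter bound on $(M,F^*h)$ from the volume, curvature, and injectivity bounds (the latter from Corollary \ref{injectivity}) and cites Cheeger's finiteness theorem to fix a diffeomorphism type $M_0$ (and implicitly $N_0$). It then uses only the \emph{construction} underlying Theorem \ref{geometric.convergence.ambient} — the $(r,\alpha)$-immersion reparametrization via Langer charts — to show that, after reparametrization, $\mathcal{F}$ is bounded in $C^2(M_0,N_0)$, hence precompact in $C^{1,\gamma}(M_0,N_0)$ by Arzela--Ascoli, and concludes directly from openness of regular homotopy classes (Proposition \ref{regular.homotopy.open}): a compact set cannot meet infinitely many disjoint open sets. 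Your version instead invokes the full conclusion of Theorem \ref{geometric.convergence.ambient}, then supplies the diameter control via a lower bound on the volume of small balls (your ``Bishop--Gromov'' step is really a G\"{u}nther/Croke-type lower volume bound coming from the injectivity radius plus a two-sided curvature bound; Bishop--Gromov proper gives the upper bound, so the attribution is slightly off though the conclusion is standard and correct), and uses that to force $\phi_k,\psi_k$ to be eventually surjective so that Proposition \ref{convergences} applies. The trade-off: your route leans on the heavier machinery but makes explicit how the volume bounds enter and why the limits are compact; the paper's route is leaner and avoids appealing to the pointed-limit apparatus when the objects are in fact compact. One small point worth noting is that ``picking $F_k$ from infinitely many distinct classes'' and then extracting a convergent subsequence only contradicts your hypothesis because you chose at most one representative per class; the paper's phrasing (compact set vs.\ open cover) sidesteps this bookkeeping.
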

	\begin{proof}
		For any immersion satisfying the hypothesized bounds, $(M,F^*h)$ is a Riemannian manifold with bounded curvature, volume, and by Corollary \ref{injectivity}, injectivity radius. It follows from a standard Riemannian argument that each such $(M,F^*h)$ has bounded diameter.  Cheeger's theorem states that there are finitely many diffeomorphism types of such $M$.\footnote{Alternatively, one can take some care in choosing $\alpha$ to ensure that the Langer charts do not overlap too much, and thus that each contributes a definite volume $V$ depending on $r$ and $\alpha$; then we can ensure the number of Langer charts required to cover $M$ is bounded by $\frac{C_3}{V}$.  The number of diffeomorphism types is then controlled by the combinatorics of how the Langer charts overlap.  This is Langer's approach in \cite{langer85}.}  
		
		So we restrict our attention conjugate regular homotopy classes of immersions from some $M_0$ into some $N_0$. Then the proof of Theorem \ref{geometric.convergence.ambient} allows us to reparametrize $F$ as a $(r,\alpha)$ immersion; in particular, the reparametrized $F$ is bounded in $C^0$ by the diameter bound, bounded in $C^1$ since it is a $(r,\alpha)$-immersion, and bounded in $C^2$ by the assumed bound on the second fundamental form.	
		
		That is, up to reparametrization the class $\mathcal{F}$ is bounded in $C^2(M_0,N_0)$. Hence it is compact in $C^{1,\gamma}(M_0,N_0)$ for any $0\leq\gamma< 1$.  On the other hand, each regular homotopy class is open in $C^{1,\gamma}(M_0,N_0)$.  The theorem follows.
	\end{proof}
	
	By fixing a target manifold, we get a finiteness theorem for regular homotopy classes up to parametrization of the domain:
	
	\begin{theorem}\label{regular.homotopy.finite.fixedtarget}
		For any compact Riemannian manifold $(N,h)$, let $\mathcal{F}_{(N,h)}$ be the class of immersions $F:M\rightarrow(N,h)$ which satisfy $\vol(F(M))\leq C_1$, $\left|\II(F(M))\right|\leq C_2$.  There are finitely many regular homotopy classes, up to parametrization of the domain, represented in $\mathcal{F}_{(N,h)}$.
	\end{theorem}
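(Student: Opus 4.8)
The plan is to deduce Theorem~\ref{regular.homotopy.finite.fixedtarget} from Theorem~\ref{regular.homotopy.finite} by essentially the same compactness-plus-openness argument, only now we need not worry about the finiteness of diffeomorphism types of the domain because we are content to count classes \emph{up to parametrization}. First I would fix the compact target $(N,h)$; this automatically provides the bounds $\vol(N,h)\leq C_1'$, $\inj(N,h)\geq\eta>0$, $|\Rm(N,h)|\leq C_2'$ which were hypotheses in Theorem~\ref{regular.homotopy.finite}. By Corollary~\ref{injectivity} together with the volume bound $\vol(F(M))\leq C_1$ and the assumed $|\II(F(M))|\leq C_2$, each induced Riemannian manifold $(M,F^*h)$ has bounded curvature, bounded volume, bounded injectivity radius, and hence (by the standard Riemannian argument cited in the proof of Theorem~\ref{regular.homotopy.finite}) bounded diameter. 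Cheeger's finiteness theorem then gives finitely many diffeomorphism types of such $M$, so it suffices to fix one diffeomorphism type $M_0$ and show that finitely many regular homotopy classes of immersions $M_0\to(N,h)$, up to $\mathrm{Diff}(M_0)$, are represented in $\mathcal{F}_{(N,h)}$.

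Next I would argue exactly as in the proof of Theorem~\ref{regular.homotopy.finite}: using the proof of Theorem~\ref{geometric.convergence.ambient} (i.e.\ Lemma~\ref{arralpha.ambient}), every $F\in\mathcal{F}_{(N,h)}$ can be reparametrized to a $(r,\alpha)$-immersion, where $r,\alpha$ depend only on $C_2$ and the geometry of $(N,h)$. Such a reparametrization is bounded in $C^0$ by the diameter bound, in $C^1$ by the $(r,\alpha)$ condition, and in $C^2$ by $|\II|\leq C_2$ via Lemma~\ref{hessianbound.ambient}; the key point is that reparametrizing does not change the regular homotopy class up to $\mathrm{Diff}(M_0)$. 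Therefore the image of $\mathcal{F}_{(N,h)}$ in $C^2(M_0,N)/\mathrm{Diff}(M_0)$, or more concretely the set of $(r,\alpha)$-immersion representatives, is bounded in $C^2(M_0,N)$, hence precompact in $C^{1,\gamma}(M_0,N)$ for any $0\leq\gamma<1$ by Arzela--Ascoli.

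Finally I would invoke Proposition~\ref{regular.homotopy.open}: the intersection of $C^{1,\gamma}(M_0,N)$ with each regular homotopy class is open in $C^{1,\gamma}(M_0,N)$. A precompact set meeting infinitely many disjoint open sets would have a convergent subsequence whose limit lies in one such open set but is approached by members of infinitely many others, a contradiction. Hence only finitely many regular homotopy classes of immersions $M_0\to(N,h)$ are represented, and combining over the finitely many diffeomorphism types $M_0$ gives the theorem. The only mild subtlety — the main obstacle, such as it is — is bookkeeping the quotient by $\mathrm{Diff}(M_0)$: we must be sure that (i) reparametrizing to a $(r,\alpha)$-immersion is a diffeomorphism of the domain and so preserves the regular-homotopy-up-to-parametrization class, and (ii) that two $(r,\alpha)$-representatives which are $C^{1,\gamma}$-close are genuinely regular homotopic after a further diffeomorphism, which is exactly what Proposition~\ref{regular.homotopy.open} and the definition of conjugate regular homotopy deliver. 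Once this is stated carefully the argument is identical to that of Theorem~\ref{regular.homotopy.finite}.
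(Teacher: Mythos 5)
Your argument is correct and takes essentially the same route as the paper, which treats Theorem~\ref{regular.homotopy.finite.fixedtarget} as an immediate consequence of Theorem~\ref{regular.homotopy.finite} (fixing the target automatically supplies the volume, injectivity-radius, and curvature bounds on $N$ that Theorem~\ref{regular.homotopy.finite} requires, and the proof of that theorem works in $C^{1,\gamma}(M_0,N)$ with $M_0$ fixed, so it already delivers finiteness of regular homotopy classes up to $\mathrm{Diff}(M_0)$ rather than merely the coarser conjugate classes). You correctly flagged the one real subtlety — that the statement of Theorem~\ref{regular.homotopy.finite} alone, which quotients by $\mathrm{Diff}(N)$ as well, is not formally enough, and that one must go back to the compactness-in-$C^{1,\gamma}(M_0,N)$-plus-openness argument — which the paper leaves implicit.
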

	
	To state Theorem \ref{regular.homotopy.finite} in a manner more topologically useful, we fix the diffeomorphism type of $M$ and state the contrapositive to obtain:
	
	\begin{theorem}\label{regular.homotopy.finite.fixedtargetdomain2}
		Let $\mathcal{C}=\{c_i\}$ be a collection of regular homotopy classes of maps $F:M\rightarrow (N,h)$, up to diffeomorphism of $M$.  If $\mathcal{C}$ is infinite, then there is no choice of immersed representatives $F_i\in c_i$ which satisfies $\vol(F_i(M))\leq C_1$, $\left|\II(F_i(M))\right|\leq C_2$.
	\end{theorem}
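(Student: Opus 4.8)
The plan is to obtain this statement as the contrapositive of Theorem \ref{regular.homotopy.finite.fixedtarget}. First I would note that, because the diffeomorphism type of $M$ is held fixed in the present statement, declaring two regular homotopy classes of maps $M\to(N,h)$ equivalent ``up to diffeomorphism of $M$'' is precisely the equivalence ``up to parametrization of the domain'' used in Theorem \ref{regular.homotopy.finite.fixedtarget}: two immersions $F,F'\colon M\to(N,h)$ represent the same class in this sense exactly when there is a diffeomorphism $\phi\colon M\to M$ with $F\circ\phi$ regularly homotopic to $F'$. So the two theorems are really indexing the same set of equivalence classes.

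Next, I would argue by contradiction. Suppose there is a choice of immersed representatives $F_i\in c_i$ with $\vol(F_i(M))\leq C_1$ and $|\II(F_i(M))|\leq C_2$. Then every $F_i$ lies in the class $\mathcal{F}_{(N,h)}$ of Theorem \ref{regular.homotopy.finite.fixedtarget}: the intrinsic bounds $\vol(F_i(M))\leq C_1$ and $|\II(F_i(M))|\leq C_2$ are assumed, while the ambient bounds on $\vol(N,h)$, $\inj(N,h)$, and $|\Rm(N,h)|$ hold automatically since $(N,h)$ is a fixed compact Riemannian manifold. By Theorem \ref{regular.homotopy.finite.fixedtarget}, only finitely many regular homotopy classes, up to parametrization of the domain, are represented in $\mathcal{F}_{(N,h)}$. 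But by hypothesis the $c_i$ are pairwise distinct as regular homotopy classes up to diffeomorphism of $M$, hence—by the identification in the first paragraph—pairwise distinct up to parametrization of the domain; since $\mathcal{C}$ is infinite, this contradicts the finiteness just obtained, completing the proof.

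For completeness I would recall that Theorem \ref{regular.homotopy.finite.fixedtarget} is itself the special case of Theorem \ref{regular.homotopy.finite} in which $(N,h)$ is held fixed, so that its conclusion reduces to the finiteness of conjugate regular homotopy classes (the finiteness of diffeomorphism types of $M$ being vacuous here, as $M$ is already fixed). There is essentially no obstacle in this argument—it is a pure restatement of results already established—and the only point that requires (minor) care is keeping the three variants of the equivalence relation straight: ``conjugate regular homotopic,'' ``regular homotopic up to parametrization of the domain,'' and ``regular homotopic up to diffeomorphism of $M$'' all coincide once the diffeomorphism type of the domain is fixed, which is exactly the situation at hand.
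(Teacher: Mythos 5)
Your proposal is correct and takes essentially the same approach as the paper: the paper itself introduces this theorem as the contrapositive of the preceding finiteness theorem with the diffeomorphism type of $M$ (and the ambient $(N,h)$) held fixed, and gives no further proof. Your unwinding of the contrapositive and the observation that ``up to diffeomorphism of $M$'' and ``up to parametrization of the domain'' coincide once $M$ is fixed is exactly the intended reasoning.
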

	
	Similarly, we may prove finiteness theorems for ambient isotopy classes of embeddings $F:M\hookrightarrow (N,h)$.  Since embeddedness is fragile, we require uniformity in the following sense:
	
	\begin{definition} The embedding constant of an immersion $F:M\rightarrow (N,h)$ is 
		\begin{equation}
			\kappa(F)=\sup_{p,q\in M} \frac{d_g(p,q)}{d_h(F(p),F(q))}
		\end{equation}
		where $d_g$ is the distance function on $M$ induced by $g=F^*h$ and $d_h$ is the distance function on $N$ induced by $h$.
	\end{definition}
	
	$F$ is an embedding if and only if $\kappa(F)$ is finite.  $F$ is totally geodesic if and only if $\kappa(F)=1$.	
	
	\begin{proposition}\label{ambient.isotopy.open}
		Let $M^m$ be a smooth compact manifold, $F\in C^{1,\alpha}(M,N)$ an embedding.  Then there is $\epsilon(F)>0$ such that $||G-F||_{C^{1,\alpha}}\leq \epsilon$ implies that $G$ is an embedding which is ambient-isotopic to $F$.
		
		In particular, the intersection of $C^{1,\alpha}(M,N)$ with each ambient isotopy class is open in $C^{1,\alpha}(M,N)$.
	\end{proposition}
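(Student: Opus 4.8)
The plan is to choose $\epsilon(F)$ small enough to force three things in turn: that $G$ is an immersion, that $G$ is injective and hence --- being a map out of a compact manifold --- an embedding, and that $G$ can be joined to $F$ by a path of embeddings, which the isotopy extension theorem then upgrades to an ambient isotopy of $N$. The immersion step is immediate: an embedding is in particular an immersion, so Proposition \ref{regular.homotopy.open} (or, directly, the inverse function theorem together with the compactness of $M$) supplies $\epsilon_1>0$ such that $||G-F||_{C^{1,\alpha}}\le\epsilon_1$ implies $G$ is an immersion.

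For injectivity I would show that the embedding condition is $C^1$-open at $F$ by splitting $M\times M$ into an off-diagonal region $\{d_g(p,q)\ge\rho\}$ and a neighbourhood of the diagonal. Here $\rho>0$ is chosen so small that $M$ is covered by finitely many Langer-type charts on which $F$, read in suitable local coordinates on $N$, is the graph of a function with small gradient, and hence bi-Lipschitz onto its image with constants uniform over the atlas --- this is exactly the $(r,\alpha)$-immersion picture of Lemmas \ref{arralpha} and \ref{balls}. On the compact off-diagonal region $F$ is injective, so $m_0:=\inf\{d_h(F(p),F(q)) : d_g(p,q)\ge\rho\}>0$, and a $C^0$-perturbation of size $<m_0/2$ keeps $G(p)\ne G(q)$ there. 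Near the diagonal, a $C^1$-small perturbation of a uniformly bi-Lipschitz graph is still injective on each chart, and by the uniformity also on overlaps. Taking $\epsilon_2$ to be the smaller of the two thresholds, $||G-F||_{C^1}\le\epsilon_2$ makes $G$ an injective immersion of the compact manifold $M$, hence an embedding.

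To produce the ambient isotopy I would shrink $\epsilon$ once more so that $G(p)$ lies in a normal neighbourhood of $F(p)$ for every $p$ --- possible because $F(M)$ is compact and therefore has a neighbourhood on which $\exp^h$ is a diffeomorphism on metric balls of some fixed radius --- and set
\[
	F_t(p)=\exp^h_{F(p)}\bigl(t\,(\exp^h_{F(p)})^{-1}(G(p))\bigr),\qquad t\in[0,1].
\]
This is a $C^{1,\alpha}$ family with $F_0=F$ and $F_1=G$, depending smoothly on $t$, and $||F_t-F||_{C^{1,\alpha}}\le C(F)\,\epsilon$ uniformly in $t$, where $C(F)$ depends only on the geometry of $N$ near $F(M)$. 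Applying the injectivity step with $\epsilon$ replaced by $C(F)\epsilon$, each $F_t$ is an embedding, so $\{F_t\}$ is an isotopy of embeddings of the compact manifold $M$; the isotopy extension theorem then extends it to an ambient isotopy $\{H_t\}$ of $N$ with $H_0=\Id$ and $H_t\circ F=F_t$, so in particular $H_1\circ F=G$ and $G$ is ambient isotopic to $F$. The final assertion follows because the $\epsilon$-ball about $F$ is open in $C^{1,\alpha}(M,N)$.

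The crux is the injectivity step: one must produce a single $\epsilon_2$ handling the near-diagonal and far-diagonal behaviour simultaneously. The far part is a soft compactness argument, but the near-diagonal part requires quantifying how much room the local graph representation of $F$ has before a perturbation can fold it onto itself, and the uniform charts of Section \ref{section.euclidean.construction} are precisely what make this quantitative. One should also be mildly careful invoking the isotopy extension theorem in the $C^{1,\alpha}$ category, but since the constructed $F_t$ is polynomial in $t$ and $C^{1,\alpha}$ in $p$, the standard statement applies.
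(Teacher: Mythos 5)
Your proposal is correct, and it is considerably more detailed than the paper's own proof, which disposes of the proposition in a single sentence by deferring to Proposition \ref{regular.homotopy.open} and asserting that ``an immersion which is locally ambient isotopic to an embedding must be an embedding which is ambient isotopic.'' That assertion, taken literally, is not quite enough: local ambient-isotopy to an embedding does not by itself rule out widely separated points of $M$ being mapped to the same point of $N$ (a figure-eight is locally a graph everywhere and hence locally ambient isotopic to an arc, yet is not embedded). The content the paper elides is exactly the far-from-diagonal estimate you supply --- on the compact set $\{d_g(p,q)\ge\rho\}$ the embedding $F$ has a positive separation $m_0$, so a $C^0$-perturbation smaller than $m_0/2$ cannot create a new self-intersection there --- and the near-diagonal injectivity then follows from the uniform bi-Lipschitz graph structure of the $(r,\alpha)$-charts, via the mean-value inequality applied to $G-F$ on each chart. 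Your geodesic interpolation $F_t=\exp^h_{F(p)}\bigl(t(\exp^h_{F(p)})^{-1}(G(p))\bigr)$ stays $C^{1,\alpha}$-close to $F$ uniformly in $t$ because the exponential map and its derivatives are controlled on a compact neighbourhood of $F(M)$, so each $F_t$ is an embedding by the same two-region argument, and the isotopy extension theorem finishes the job. In short: your decomposition of the injectivity step into off-diagonal compactness plus near-diagonal graph rigidity is the argument the paper's proof is implicitly relying on, and writing it out explicitly is the right thing to do.
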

	\begin{proof}
		The proof is the same as that of Proposition \ref{regular.homotopy.open}, since an immersion which is locally ambient isotopic to an embedding must be an embedding which is ambient isotopic.
	\end{proof}
	
	\begin{theorem}\label{ambient.isotopy.finite}
		Consider the class $\mathcal{F_{\text{emb}}}$ of embeddings $F:M\rightarrow (N,h)$ which satisfy, for some $C_1,C_2,C_3,C_4,C_5,\eta$\begin{itemize}
				\item $\vol(N,h)\leq C_1$, $\inj(N,h)\geq \eta$, $\left|\Rm(N,h)\right|\leq C_2$
				\item $\vol(M,F^*h)\leq C_3$, $\left|\II(F(M),h)\right|\leq C_4$, $\kappa(F)<C_5$
			\end{itemize}
		Then there are finitely many diffeomorphism types of $M$, finitely many diffeomorphism types of $N$, and finitely many conjugate ambient isotopy classes of $F$ represented in $\mathcal{F_{\text{emb}}}$.
	\end{theorem}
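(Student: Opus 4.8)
The plan is to follow the proof of Theorem~\ref{regular.homotopy.finite} almost verbatim, the only genuinely new point being that the hypothesis $\kappa(F)<C_5$ forces embeddedness to survive under $C^{1,\gamma}$ limits. As in that proof, Corollary~\ref{injectivity} gives $\inj(M,F^*h)\geq C/C_4$, so each $(M,F^*h)$ has bounded curvature, volume, and injectivity radius, hence (by the standard Riemannian argument) bounded diameter, and Cheeger's theorem yields finitely many diffeomorphism types of $M$; the bounds on $(N,h)$ likewise give finitely many diffeomorphism types of $N$. It therefore suffices to bound the number of conjugate ambient isotopy classes among embeddings of a fixed $M_0$ into a fixed $N_0$. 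Using Cheeger--Gromov compactness we may pass to a subsequence along which the ambient metrics converge in $C^{\ell_0+1,\gamma}$ to some $h_\infty$ on $N_0$, and by the proof of Theorem~\ref{geometric.convergence.ambient} we may reparametrize each $F$ as an $(r,\alpha)$-immersion. Such a reparametrized $F$ is bounded in $C^0$ by the diameter bound, in $C^1$ because it is an $(r,\alpha)$-immersion, and in $C^2$ by the bound on $\II$ together with the uniform control on the ambient charts from Proposition~\ref{CG.charts}; hence the reparametrized family is precompact in $C^{1,\gamma}(M_0,N_0)$.

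The one thing that must be checked is that any subsequential limit $F_\infty$ of reparametrized embeddings $F_k\in\mathcal{F}_{\text{emb}}$ is again an embedding. Since $F_k\to F_\infty$ in $C^{1,\gamma}$ and each $F_k$ is an $(r,\alpha)$-immersion, $F_\infty$ is an immersion, so $g_\infty=F_\infty^*h_\infty$ is a genuine Riemannian metric on $M_0$ and $d_{g_\infty}(p,q)>0$ whenever $p\neq q$. The induced metrics $g_k=F_k^*h_k$ converge to $g_\infty$ in $C^0$, so the distance functions $d_{g_k}$ converge uniformly to $d_{g_\infty}$; likewise $F_k\to F_\infty$ in $C^0$ and $h_k\to h_\infty$ in $C^0$ give $d_{h_k}(F_k(p),F_k(q))\to d_{h_\infty}(F_\infty(p),F_\infty(q))$ uniformly in $(p,q)$. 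Passing to the limit in the inequality $d_{h_k}(F_k(p),F_k(q))\geq d_{g_k}(p,q)/C_5$ yields $d_{h_\infty}(F_\infty(p),F_\infty(q))\geq d_{g_\infty}(p,q)/C_5>0$ for $p\neq q$, so $F_\infty$ is injective, hence an embedding of the compact manifold $M_0$, with $\kappa(F_\infty)\leq C_5$.

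Consequently the reparametrized class $\mathcal{F}_{\text{emb}}$ is a precompact subset of the space of embeddings inside $C^{1,\gamma}(M_0,N_0)$. By Proposition~\ref{ambient.isotopy.open} each embedding $F$ in this set has a $C^{1,\gamma}$-neighborhood of radius $\epsilon(F)>0$ consisting of embeddings ambient isotopic to $F$; precompactness lets finitely many such neighborhoods cover the class, so only finitely many ambient isotopy classes of embeddings $M_0\hookrightarrow N_0$ are represented. Combined with the finiteness of diffeomorphism types of $M$ and of $N$, this gives finitely many conjugate ambient isotopy classes in $\mathcal{F}_{\text{emb}}$. The main obstacle is precisely the step in the second paragraph: without a uniform bound on the embedding constant a $C^{1,\gamma}$ limit of embeddings could degenerate into a merely immersed configuration with self-tangencies, and the whole scheme would collapse; the role of $C_5$ is to make embeddedness a \emph{closed} condition in the relevant topology.
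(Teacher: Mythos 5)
Your proof is correct and follows exactly the route the paper intends: mirror the proof of Theorem~\ref{regular.homotopy.finite}, with the single additional observation that the bound $\kappa(F)<C_5$ makes embeddedness a closed condition under $C^{1,\gamma}$ limits, which you verify carefully by passing the defining inequality $d_{h_k}(F_k(p),F_k(q))\geq d_{g_k}(p,q)/C_5$ to the limit. The paper's own proof simply asserts this closedness in one sentence; your argument supplies the missing details.
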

	\begin{proof}
		The only difference between the proof of this theorem and Theorem \ref{regular.homotopy.finite} is we must assume the embeddings are uniform so that the class  $\mathcal{F_{\text{emb}}}$ will be closed.
	\end{proof}
	
	Similarly, there are ambient-isotopy versions of Theorems \ref{regular.homotopy.finite.fixedtarget} and \ref{regular.homotopy.finite.fixedtargetdomain}.

	\bigskip

	To conclude this section, we give examples of infinite collections of homotopy classes which have immersive representatives. First consider $M=T^2$, $N=T^5$.  By Whitney's theorem, every map $F:M\rightarrow N$ is homotopic to an immersion.  Moreover, since $T^2$ and $T^5$ are Eilenberg-Maclane spaces, we have $[M,N]=\Hom(\ints^2,\ints^5)$.
	
	Similarly, we may consider two hyperbolic manifolds $M^m=\mathbb{H}^m/\Gamma$, $N^n=\mathbb{H}^{m+n}/\Lambda$, where $\Gamma$ is a lattice in $SO(m,1)$ and $\Lambda$ is a lattice in $SO(m+n,1)$.  If $n\geq m$, Whitney's theorem says that every map from $M$ to $N$ is homotopic to an immersion.  The homotopy classes of maps from $M$ to $N$ are given by $\Hom(\Gamma,\Lambda)$.  $\Gamma$ and $\Lambda$ can be chosen so that $\Hom(\Gamma,\Lambda)$ is infinite.
	
	Or consider the case of a simply-connected four-manifold $X$ with non-torsion $H_2(X)$.  By the theorem of Hurewicz, $|\pi_2(X)|=\infty$; Theorem \ref{regular.homotopy.finite.fixedtargetdomain} says that in order to realize each one of these classes, the immersion must be allowed to have either arbitrarily large volume or arbitrarily large curvature.
	
	We also note that in the case $M=S^1$ and $N$ is closed, every homotopy class admits a geodesic representative, so our finiteness theorems imply that for each $L>0$ there are at most finitely many distinct homotopy classes whose (shortest) geodesic representatives have length less than $L$.

\section{Application: Singularities of the Mean Curvature Flow}\label{section.blowup}
	We now use Theorem \ref{geometric.convergence.ambient} to construct singularity models for compact mean curvature flows $F:M\times[0,T)\rightarrow (N,h)$.

First we state a compactness theorem for mean curvature flows, which follows directly from Theorem \ref{geometric.convergence.ambient}:
\begin{theorem}\label{geometric.convergence.MCF}
	Suppose that $F_j:M_j\times[\alpha,\omega]\rightarrow (N_j,h_j)$ are compact mean curvature flows such that $\left|\II_j(t)\right|\leq C$ for all $j$ and all $t\in[\alpha,\omega]$ and $\left|\nabla^\ell\II_j(0)\right|\leq C_\ell$ for each $\ell$, and such that $(N_j,h_j)$ have uniformly bounded geometry. Then there is a mean curvature flow $F_\infty:M_\infty\times [\alpha,\omega]\rightarrow (N_\infty,h_\infty)$ such that for each $t\in[\alpha,\omega]$, $F_j(t)$ subconverges in $C^{\ell}$ in the geometric sense to $F_\infty(t)$, for any $\ell$; moreover this convergence is uniform in $t$.
\end{theorem}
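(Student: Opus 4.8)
The plan is to reduce the statement to Theorem~\ref{geometric.convergence.ambient} applied to a single time slice, and then to upgrade the resulting convergence to all of $[\alpha,\omega]$, uniformly, by exploiting the parabolic character of the flow. First I would apply Theorem~\ref{geometric.convergence.ambient} to the immersions $F_j(0):M_j\to(N_j,h_j)$; the hypotheses hold because $|\nabla^\ell\II_j(0)|\le C_\ell$ is assumed for every $\ell$ and the $(N_j,h_j)$ have uniformly bounded geometry. After relabeling a subsequence this produces a $C^\infty$ manifold $M_\infty$ with an exhaustion $W_1\subset W_2\subset\cdots$ and embeddings $\phi_k:W_k\hookrightarrow M_k$, a complete $(N_\infty,h_\infty)$ with an exhaustion $V_1\subset V_2\subset\cdots$ and embeddings $\psi_k:V_k\hookrightarrow N_k$ with $\psi_k^*h_k\to h_\infty$, and maps $\psi_k^{-1}\circ F_k(0)\circ\phi_k\to F_\infty(0)$ in $C^\ell$ on compact sets for every $\ell$ (the latter because all covariant derivatives of $\II_k(0)$ are controlled).

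Next I would establish bounds, independent of the flow index and of $t\in[\alpha,\omega]$, on every $\nabla^\ell\II_j(t)$. This is where the evolution equation enters: the interior smoothing estimates for the mean curvature flow (the analogue for $\II$ of Shi's estimates for the Ricci flow; cf.\ \cite{chen&he08}) show that the uniform bound $|\II_j(t)|\le C$ on the time interval, together with the bounds on $|\nabla^\ell\II_j|$ at one time, force $|\nabla^\ell\II_j(t)|\le C_\ell'$ for every $\ell$ and $t$. I would also record the evolution of the induced metrics, $\partial_t g_{ij}=-2H_j^\alpha h_{ij\alpha}$: since $|\II_j(t)|\le C$, ODE comparison gives $e^{-c|t-s|}g_j(s)\le g_j(t)\le e^{c|t-s|}g_j(s)$ with $c$ depending only on $C$ and $m$, so over the bounded interval $[\alpha,\omega]$ all the $g_j(t)$ remain uniformly comparable to $g_j(0)$.

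I would then work entirely in the atlas extracted at $t=0$. By the metric comparison, the Langer charts at $t=0$ stay uniformly bi-Lipschitz charts for every $t$, and — either by bounding the (smoothly time-dependent, uniformly controlled) transition maps to the time-$t$ Langer charts of Lemma~\ref{arralpha.ambient}, or by a parabolic bootstrap starting from the uniformly elliptic system $\partial_t F_j=g_j^{pq}(\partial_p\partial_q F_j-\Gamma^r_{pq}\partial_r F_j)$ — one obtains uniform bounds on $\tilde{F}_j(t):=\psi_k^{-1}\circ F_j(t)\circ\phi_k$ in $C^\ell(W_k\times[\alpha,\omega],\reals^K)$ for every $\ell$: the spatial bounds come from Lemmas~\ref{hessianbound.ambient} and~\ref{higherderivs.ambient} fed by the $\nabla^\ell\II_j(t)$ estimates, and each time derivative is converted into two spatial derivatives via the flow equation. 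Arzela--Ascoli on $W_k\times[\alpha,\omega]$, a diagonal subsequence over $k$, and the chart-patching argument of Theorem~\ref{geometric.convergence.ambient} — which can be carried out compatibly for all $t$ precisely because the atlas is fixed and does not move with the flow — produce $F_\infty:M_\infty\times[\alpha,\omega]\to(N_\infty,h_\infty)$ with $F_j(t)\to F_\infty(t)$ in the $C^\ell$ geometric sense, uniformly in $t$. Since all derivatives, including those in $t$, converge, letting $j\to\infty$ in $\partial_t F_j=\vec{H}_j$ shows $F_\infty$ is a mean curvature flow.

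I expect the main obstacle to be the third step: arranging that the single reparametrization (and hence the single $M_\infty$) extracted at $t=0$ controls the whole flow uniformly over $[\alpha,\omega]$, rather than being forced to re-extract charts — and a new limit manifold — at each time. The key to this is the metric-comparison estimate of the second step together with the interior smoothing estimates, and the subsidiary point that those smoothing estimates be applied so as to remain uniform up to the endpoints of $[\alpha,\omega]$, where such parabolic estimates typically degenerate.
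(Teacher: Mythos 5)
Your proposal is correct and follows essentially the same route as the paper's proof: upgrade $|\II_j(t)|\leq C$ to uniform bounds on all $\nabla^\ell\II_j(t)$ via the parabolic smoothing estimates, apply Theorem~\ref{geometric.convergence.ambient}, and argue that the reparametrizations $\phi_j,\psi_j$ and the limit manifolds can be taken independent of $t$ because the construction depends only on those uniform bounds (with time-derivatives of $F_j$ controlled by the flow equation, so the limit is again a mean curvature flow). You carry out the time-independence step more explicitly than the paper --- via the metric comparability of $g_j(t)$ with $g_j(0)$ and a joint Arzela--Ascoli in space and time on $W_k\times[\alpha,\omega]$ --- where the paper simply asserts that the construction ``relies only on the curvature and injectivity bounds, which are uniform.''
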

\begin{proof}
	By the smoothness estimate for the mean curvature flow, the uniform bound on the second fundamental form gives uniform bounds on all its derivatives as well. Thus at each $t\in[\alpha,\omega]$, we may apply Theorem \ref{geometric.convergence.ambient} to get $M_\infty(t)$, $F_\infty(t)$, and $(N_\infty(t),h_\infty(t))$.  The time-derivatives $\frac{\partial^\ell}{\partial t^\ell}F_j$ are, by the flow equation, uniformly bounded; thus $F_\infty(t):M_\infty(t)\rightarrow (N_\infty(t),h_\infty(t))$ are a smooth one-parameter family.  Moreover, the construction of $M_\infty(t)$ and $(N_\infty(t),h_\infty(t))$ and the maps $\phi_j$ and $\psi_j$ relies only on the curvature and injectivity bounds, which are uniform, so we may take $M_\infty$, $(N_\infty,h_\infty)$, and $\phi_j,\psi_j$ independent of time.

	It is clear that $F_\infty:M_\infty\times[\alpha,\omega]\rightarrow (N_\infty,h_\infty)$ is a mean curvature flow.
\end{proof}

\subsection{The Smooth Blow-up}

Now suppose that $F:M\times[0,T)\rightarrow (N,h)$ is a compact mean curvature flow, $(p_j,t_j)$ are a sequence of points and times (the {\em central sequence}), and $\alpha_j\nearrow\infty$ are a sequence of positive numbers such that $\limsup_j\frac{\sup_{M\times[0,t_j]}\left|\II\right|}{\alpha_j}<\infty$. Then the rescales
	\begin{equation}
		\tilde{F}_j(s):F(t_j+\frac{s}{Q_j^2})\rightarrow (N,\alpha_j^2h,F(p_j,t_j))
	\end{equation}
form a sequence as in Theorem \ref{geometric.convergence.MCF} for any $[\alpha,\omega]\subset(-\alpha_j^2t_j,0]$. Note that if the geometry of $(N,h)$ is bounded, then the Cheeger-Gromov limit of $(N,\alpha_j^2h,F(p_j,t_j))$ is $(\reals^{m+n},\dx^2)$.

To construct models for the singularities of the mean curvature flow, we must correctly pick the central sequence $(p_j, t_j)$ and the scale factors $\alpha_j$. The choices we make are inspired by those used by Hamilton for the Ricci flow  \cite{bensbook} \cite{hamiltoncompactness}.
	
	The construction depends on how severe the singularity is.
	
	\begin{proposition}
		For any compact mean curvature flow $F:M\times[0,T)\rightarrow(N,h)$ with singular time $T<\infty$ we have
			\begin{equation}\label{minrate}
				\max_{M}\left|\II(\cdot,t)\right|\geq \frac{C}{\sqrt{T-t}}
			\end{equation}
		where the constant $C$ depends on the initial submanifold $M_0$.
	\end{proposition}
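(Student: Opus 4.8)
The plan is to combine the parabolic maximum principle for the evolution of $|\II|^2$ with a forward ODE comparison, and then use the fact that a finite-time singularity of a smooth compact mean curvature flow is characterized by blow-up of $|\II|$.

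First I would record the evolution inequality for $u=|\II|^2$ along the flow. Following Huisken's computation of the evolution of $\II$ (in arbitrary codimension) and contracting, while retaining the ambient curvature contributions, which are bounded because $(N,h)$ has bounded geometry, one gets an inequality of the form
\begin{equation*}
	\partial_t u \le \Delta u - 2|\nabla\II|^2 + c_1 u^2 + c_2 u + c_3 \le \Delta u + C'\bigl(1+u^2\bigr),
\end{equation*}
with $c_1=c_1(m,n)$ and $c_2,c_3$, hence $C'$, depending only on $m,n$ and on bounds for $\Rm(N,h)$ and $\nabla\Rm(N,h)$. By Hamilton's maximum principle, $\phi(t):=\max_M|\II(\cdot,t)|^2$ is locally Lipschitz on $[0,T)$ and, in the sense of forward difference quotients, satisfies $\phi'(t)\le C'(1+\phi(t)^2)$, equivalently $\frac{d}{dt}\arctan\phi(t)\le C'$.

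Comparing $\phi$ with the solution of $y'=C'(1+y^2)$ through $(t_0,\phi(t_0))$ shows that $|\II|$ remains bounded on $[t_0,\,t_0+\sigma(t_0))$, where $\sigma(t_0)=\tfrac{1}{C'}\arctan\bigl(1/\phi(t_0)\bigr)$. Suppose now that $T\le t_0+\sigma(t_0)$ for some $t_0\in[0,T)$. Then $|\II|$ is bounded on $[t_0,T)$ by the previous sentence and on the compact interval $[0,t_0]$ because the flow is smooth there, hence bounded on all of $[0,T)$; the interior-in-time smoothing estimates for the mean curvature flow then bound all covariant derivatives of $\II$ uniformly as $t\to T$ and permit the flow to be continued smoothly past $T$, contradicting that $T$ is the singular time. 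Therefore $T-t_0>\sigma(t_0)=\tfrac{1}{C'}\arctan(1/\phi(t_0))$ for every $t_0\in[0,T)$. Rearranging: using $\arctan(1/\Lambda)\ge\tfrac{1}{2\Lambda}$ for $\Lambda\ge1$, if $\phi(t_0)\ge1$ we get $\phi(t_0)>\tfrac{1}{2C'(T-t_0)}$, i.e. $\max_M|\II(\cdot,t_0)|>(2C')^{-1/2}(T-t_0)^{-1/2}$; if $\phi(t_0)<1$, then $T-t_0>\tfrac{1}{C'}\arctan 1=\tfrac{\pi}{4C'}=:\tau_0$, and on the compact set $[0,T-\tau_0]$ (empty if $T\le\tau_0$) the positive continuous function $t\mapsto\max_M|\II(\cdot,t)|$ has a positive minimum $m_0$, so there $\tfrac{C}{\sqrt{T-t}}\le\tfrac{C}{\sqrt{\tau_0}}\le m_0$ once $C\le m_0\sqrt{\tau_0}$. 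Taking $C=\min\{(2C')^{-1/2},\,m_0\sqrt{\tau_0}\}$ proves the claim: $C'$ depends only on $m,n$ and the geometry of $(N,h)$, while $m_0$ and $T$ depend on $M_0$, so $C$ depends on $M_0$.

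The main obstacle is making the first step and the continuation argument precise: deriving the Simons--Huisken-type evolution inequality for $|\II|^2$ in higher codimension with the ambient curvature and its covariant derivative correctly controlled, and invoking the exact continuation criterion — that a smooth compact mean curvature flow with $\sup_{M\times[0,T)}|\II|<\infty$ extends past $T$ — which itself relies on the local-in-time smoothing estimates for the flow.
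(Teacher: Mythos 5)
The paper states this proposition without proof --- it is a standard blow-up-rate lower bound, going back to Huisken in the Euclidean hypersurface case --- so there is no in-paper argument to compare against, and your proposal supplies the expected one. Deriving the Simons-type evolution inequality $\partial_t u\le\Delta u+C'(1+u^2)$ for $u=|\II|^2$ (with $C'$ depending only on $m,n$ and the bounds on $\Rm(N,h)$, $\nabla\Rm(N,h)$), passing to the ODE for $\phi(t)=\max_M|\II(\cdot,t)|^2$ via the maximum principle, and combining the comparison solution with the continuation criterion is exactly the right strategy. The $\arctan$ substitution to absorb the lower-order ambient terms, and the split into the regimes $\phi\ge1$ and $\phi<1$ (the latter controlled by a positive minimum $m_0$ on the compact set $[0,T-\tau_0]$), are both sound. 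In the flat ambient case one works with $\phi^{-1}$ directly and obtains a dimension-only constant; your argument makes clear why, in a general ambient manifold, the constant ends up depending on $M_0$ (through $m_0$ and $T$), which is consistent with the way the proposition is phrased.

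One step should be stated more carefully. The comparison with $y'=C'(1+y^2)$ gives $\phi(t)\le y(t)$ only for $t<t_0+\sigma(t_0)$, and $y$ itself blows up at $t_0+\sigma(t_0)$. So the supposition $T\le t_0+\sigma(t_0)$ does not by itself make $\phi$ bounded on $[t_0,T)$; that holds only when $T<t_0+\sigma(t_0)$ strictly, so that $y$ is bounded on $[t_0,T]$. The contradiction argument therefore yields $T-t_0\ge\sigma(t_0)$, not strict inequality. This is harmless: running the remainder of the argument with $\ge$ in place of $>$ gives the claimed estimate $\max_M|\II(\cdot,t_0)|\ge C/\sqrt{T-t_0}$ unchanged, with the same choice of $C$.
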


The blow-up rate (\ref{minrate}) is that of a shrinking sphere or cylinder; it represents the mildest sort of singularity that the MCF can encounter.  We define

\begin{definition}
	The mean curvature flow $F:M\times[0,T)\rightarrow (N,h)$ achieves a type I singularity at $T$ if
		\begin{equation*}
			\sup_{M\times[0,T)}\left|\II\right|^2(T-t)<\infty
		\end{equation*}
	Otherwise we say the singularity is of type II.
\end{definition}
	
	First consider the case of a type II singularity. For any sequence $\tilde{t}_j\nearrow T$, let $p_j\in M$  be such that
		\begin{equation}\label{typeIIpicking}
			(\tilde{t}_j-t_j)\left|\II(p_j,t_j)\right|^2=\max_{M\times[0,\tilde{t}_j]}(\tilde{t}_j-t)\left|\II(p,t)\right|^2
		\end{equation}
	Set $Q_j=\left|\II(p_j,t_j)\right|$. By the type II assumption, $(\tilde{t}_j-t_j)Q_j^2\rightarrow\infty$, so for any $t$ there is $j$ large enough that $t\in(-Q_j^2t_j,(\tilde{t}_j-t_j)Q_j^2)$. For such $j$, we compute\begin{equation}\begin{aligned}
		\left|\II_j(p,t)\right|^2=Q_j^{-2}\left|\II(p,t_j+\frac{t}{Q_j^2})\right|^2\ & \leq\  Q_j^2\frac{(\tilde{t}_j-t_j)\left|\II(p_j,t_j)\right|^2}{\tilde{t}_j-(t_j+\frac{t}{Q_j^2})}\\
		&=\frac{(\tilde{t_j}-t_j)Q_j^2}{(\tilde{t_j}-t_j)Q_j^2-t}\end{aligned}\end{equation}
	The right-hand side of this inequality approaches 1 as $j\rightarrow\infty$, hence is bounded by a continuous function of $t$.  Therefore we may apply Theorem \ref{geometric.convergence.MCF} to the $F_j$ to extract a limit mean curvature flow $F_\infty$.
	
	If the singularity is of type I, we pick $t_j=\tilde{t}_j$ and $p_j$ so that $Q_j=\left|\II(p_j,t_j)\right|=\max_{M\times[0,t_j]}\left|\II\right|$. Then $Q_j\rightarrow\infty$.
	
	Since $M$ is compact, in either case we have that, after passing to a subsequence, $p_j\rightarrow\overline{p}$.  We choose the rescales $\tilde{F}_j$ about the central sequence $(\overline{p},t_j)$.
		\begin{equation}\label{rescalesequence}
			\tilde{F}_j(s)=F(t_j+\frac{s}{Q_j^2})\rightarrow (N,Q_j^2h,F(\overline{p},t_j))
		\end{equation}
		
	In the type I case, each $\tilde{F}_j$ has second fundamental form bounded by 1 on the interval $[-Q_j^2t_j,0]$. In the type II case, $\tilde{F}_j$ has second fundamental form bounded by 1 on the interval $[-Q_j^2t_j,Q_j^2(\tilde{t}_j-t_j)]$
		
	\begin{theorem}\label{SBU}
		The geometric limit of the rescaled sequence (\ref{rescalesequence}) is mean curvature flow $F_\infty:M_\infty\times(-\infty,C)\rightarrow\reals^{m+n}$. Here $C=0$ if the singularity is of type I and $C=\infty$ if the singularity is of type II.
		
		Moreover, we have $\left|\II_\infty(p_\infty,0)\right|=1$.
	\end{theorem}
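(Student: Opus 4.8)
The plan is to apply the mean curvature flow compactness theorem, Theorem~\ref{geometric.convergence.MCF}, on an exhausting sequence of compact time intervals and then diagonalize. \emph{First, the hypotheses of Theorem~\ref{geometric.convergence.MCF} hold on every compact subinterval of the limiting time interval.} The rescaled ambient manifolds $(N,Q_j^2h,F(\overline p,t_j))$ have sectional curvature $Q_j^{-2}$ times that of $(N,h)$ and injectivity radius $Q_j$ times that of $(N,h)$; since $Q_j\to\infty$ their geometry is uniformly bounded, and, as already observed, the pointed Cheeger--Gromov limit is $(\reals^{m+n},\dx^2)$. The bound $|\II_j|\le 1$ holds on $[-Q_j^2t_j,0]$ in the type~I case and on $[-Q_j^2t_j,Q_j^2(\tilde t_j-t_j)]$ in the type~II case, as computed above; by the smoothness estimate for the mean curvature flow (applied already in the proof of Theorem~\ref{geometric.convergence.MCF}), a bound on $|\II_j|$ over a time interval containing $[-1,0]$ upgrades to bounds $|\nabla^\ell\II_j(0)|\le C_\ell$ for every $\ell$ once $j$ is large. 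Finally $Q_j^2t_j\to\infty$ (as $t_j\to T>0$ and $Q_j\to\infty$), and in the type~II case $Q_j^2(\tilde t_j-t_j)\to\infty$ by the type~II point-picking, so every compact subinterval of $(-\infty,C)$---with $C=0$ in the type~I case and $C=\infty$ in the type~II case---lies inside the interval on which $|\II_j|\le 1$, for all large $j$.

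\emph{Next, construct the limit flow.} Because the geometric-limit construction of $M_\infty$, of $N_\infty=(\reals^{m+n},\dx^2)$, and of the reparametrizations $\phi_j$, $\psi_j$ uses only the uniform ambient curvature and injectivity bounds together with the bounds on $\II_j$ and its derivatives at the reference time $s=0$, I fix $M_\infty$, $\phi_j$, $\psi_j$ once and for all. Let $I_1\subset I_2\subset\cdots$ exhaust $(-\infty,C)$ by compact intervals. For each $\nu$, the previous paragraph lets me apply Theorem~\ref{geometric.convergence.MCF} on $I_\nu$, and, after passing to a subsequence, $\psi_j^{-1}\circ\tilde F_j(s)\circ\phi_j\to F_\infty(s)$ in the $C^\ell$ geometric sense uniformly for $s\in I_\nu$ and every $\ell$. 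Since the reparametrizations do not depend on $\nu$, the limits agree on overlaps, and a diagonal subsequence over $\nu$ yields a single map $F_\infty:M_\infty\times(-\infty,C)\to(\reals^{m+n},\dx^2)$ realized as the geometric limit on compact subsets. That $F_\infty$ is a mean curvature flow follows as in the proof of Theorem~\ref{geometric.convergence.MCF}: the time derivatives $\partial_s^\ell\tilde F_j$ are controlled by the flow equation and the derivative bounds on $\II_j$, hence subconverge to those of $F_\infty$, so $F_\infty$ is a smooth one-parameter family satisfying the flow equation.

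\emph{Finally, the normalization.} Under the constant rescaling $h\mapsto Q_j^2h$ the second fundamental form scales by $Q_j^{-1}$, so in the domain of $\tilde F_j(0)$ one has $|\II_j(\cdot,0)|=Q_j^{-1}|\II(\cdot,t_j)|$; at the chosen point this gives $|\II_j(p_j,0)|=Q_j^{-1}|\II(p_j,t_j)|=1$. As $|\II_j|\le 1$ everywhere, the convergence already yields $|\II_\infty|\le 1$ on $M_\infty\times(-\infty,0)$. For the value at the basepoint, put $q_j=\phi_j^{-1}(p_j)\in M_\infty$, defined for large $j$ because $p_j$ stays at bounded $g_j$-distance from $\overline p$; after a further subsequence $q_j\to p_\infty$. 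Since $|\II|$ is invariant under reparametrization, $|\II(\psi_j^{-1}\circ\tilde F_j(0)\circ\phi_j)|(q_j)=|\II_j(p_j,0)|=1$, and the $C^{\ell_0+1,\gamma}$ (in particular $C^2$) convergence of these maps together with $q_j\to p_\infty$ forces $|\II_\infty(p_\infty,0)|=1$.

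The main obstacle is this last point: one must verify that the maximum points $p_j$ do not escape to infinity in the rescaled geometry, so that $q_j$ stays in a fixed compact subset of $M_\infty$ and has a convergent subsequence---this is exactly the role of centering the rescaling about the subsequential limit $\overline p=\lim_j p_j$, and if one prefers one may instead recenter each $\tilde F_j$ at $(p_j,t_j)$, which puts $p_j$ at the basepoint and makes $q_j=p_\infty$ for every $j$ by construction. Everything else is a routine bookkeeping combination of Theorem~\ref{geometric.convergence.MCF} with the scaling behaviour of curvature, injectivity radius, and the second fundamental form under parabolic rescaling.
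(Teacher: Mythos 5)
The overall plan---exhaust $(-\infty,C)$ by compact intervals, invoke Theorem~\ref{geometric.convergence.MCF} on each, then diagonalize---is the same as what the paper has in mind, just spelled out more carefully. Your verification of the time domain (via $Q_j^2 t_j\to\infty$ and, in the type~II case, $Q_j^2(\tilde{t}_j-t_j)\to\infty$ from the type~II point-picking) is correct, as is the observation that the Cheeger--Gromov limit of $(N,Q_j^2h)$ is flat $\reals^{m+n}$.

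The gap is exactly where you flag it: the normalization $|\II_\infty(p_\infty,0)|=1$. You set $q_j=\phi_j^{-1}(p_j)$ and assert (i) that $p_j$ stays at bounded $g_j(0)$-distance from $\overline{p}$, so that $q_j$ lives in a fixed compact set, and (ii) that after a further subsequence $q_j\to p_\infty$. Neither follows from the convergence $p_j\to\overline{p}$ in the fixed background topology of $M$: the rescaled distance is $Q_j\,\dist_{g(t_j)}(\overline{p},p_j)$, a product of a quantity going to infinity with one going to zero, and a priori its behavior is indeterminate. Moreover (ii) is strictly stronger than (i): even if $q_j$ remained bounded it could accumulate at a point of $M_\infty$ other than $p_\infty$, in which case you would only learn $|\II_\infty(q_\infty,0)|=1$ for that other point. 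Your argument contains no mechanism that distinguishes $p_\infty$.

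The paper's proof addresses exactly this step: for fixed $k$ and large $j$, $p_j\in B_k(\overline{p},R)$ in the fixed rescaled metric $g_k(0)$; then a claimed Cauchy property $B_k(\overline{p},R)\subset B_j(\overline{p},2R)$ for $j,k$ large is used to transfer this to the $g_j(0)$-metric, giving a point of curvature $1$ within $2R$ of $p_\infty$ in the limit, for every $R>0$; letting $R\to0$ and using continuity of $|\II_\infty|$ finishes. You have no analogue of this ball-comparison step, and it is precisely the nontrivial content of the normalization. Your proposed repair---recentering each $\tilde F_j$ at $(p_j,t_j)$ so that $q_j=p_\infty$ by fiat---does work and is arguably cleaner, but it proves the statement for a different rescaled sequence than (\ref{rescalesequence}); to prove the theorem as actually stated you need to supply the argument that $q_j\to p_\infty$ for the sequence centered at $\overline p$.
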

	\begin{proof}
		Note that since $(N,h)$ has bounded geometry, the Cheeger-Gromov limit of $(N,Q_j^2h)$ is $(\reals^{m+n},\dx^2)$.
	
		The only thing left to prove is that $\left|\II_\infty(p_\infty,0)\right|=1$. For a fixed $k$, notice that the rescaled metric $g_k(0)=F_{t_k}^*(Q_k^2h)$ is a metric on $M$. Let $B_k$ denote the metric ball in the metric $g_k(0)$. Since $p_j\rightarrow p$, we have that for any $R>0$, $p_j\in B_k(\overline{p},R)$ for all $j\geq j_0(k,R)$. By geometric convergence the metrics $\{g_j(0)\}$ have the Cauchy property that $B_k(\overline{p},R)\subset B_j(\overline{p},2R)$.
		
		On the other hand, $p_j$ is a point where $\left|\II_j(p_j,0)\right|=1$. Thus in the tail of the sequence there is a point of curvature 1 within $2R$ of $\overline{p}$. This condition clearly persists to the limit, so there is a point of curvature 1 within $2R$ of $p_\infty$. But $R$ was arbitrary, so letting $R\rightarrow 0$ we see that $\left|\II_\infty(p_\infty,0)\right|=1$.
	\end{proof}
	
	We refer to the MCF $F_\infty:M_\infty\times(-\infty,C)\rightarrow \reals^{m+n}$ as a {\em smooth blow-up} of the original flow $F:M\times[0,T)\rightarrow (N,h)$.
	
	Though we have stated the construction of the smooth blow-up for compact mean curvature flows, note that the construction will also work provided the singularity is of {\em compact type}:
		\begin{definition}
		  We say that a mean curvature flow $F:M\times[0,T)\rightarrow (N,h)$ has a compact-type singularity at $T<\infty$ if:\begin{itemize}
		  	\item $\lim_{t\rightarrow T}\sup_M\left|\II(t)\right|=\infty$
		  	\item For any $t_j\nearrow T$, there exist $p_j$ with $\left|\II(p_j,t_j)\right|=\sup_{M\times[0,t_j)}\left|\II\right|$ and $p_j\rightarrow p$
		    \end{itemize}
		\end{definition}

\begin{remark}
	In general smooth blow-ups are nonunique, since Theorem \ref{geometric.convergence.MCF} only gives subsequential convergence.
\end{remark}
	
\begin{remark}
	The diffeomorphisms $\phi_k$ in the construction of the smooth blow-up amount to choosing the ``correct" parametrization of regions of the domain submanifold $M$ which are becoming singular. Huisken-Sinestrari, in order to carry out their surgery theorem, explicitly construct such a parametrization of the singular region by means of a nearby shrinking cylinder \cite{huisken&sinestrari08}. The import of Theorem \ref{geometric.convergence.MCF} is that such a parametrization can always be found.
\end{remark}

\subsection{Comparison to the Tangent Flow}
The smooth blow-up is inspired Hamilton's idea for singularity models for the Ricci flow \cite{bensbook}. In previous literature on the mean curvature flow, singularities have been understood using a rescaling procedured called the {\em tangent flow}, which we now describe.

To produce a tangent flow, we work in the category of {\em Brakke flows}, i.e. one-parameter families of integral currents which are locally maximally area-decreasing \cite{brakke78} \cite{ilmanen94}. A mean curvature flow is, a fortiori, a Brakke flow. We have the following theorem due to Brakke, which follows from the compactness theorem for integral currents of Federer-Fleming.

	\begin{theorem}[Brakke, \cite{brakke78}]
		Let $T_k(t)$ be a sequence of Brakke flows on $[\alpha,\omega]$. Then $T_k(t)$ subconverge as integral currents to a Brakke flow $T_\infty(t)$ on $[\alpha,\omega]$.
	\end{theorem}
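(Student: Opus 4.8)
The plan is to obtain this as a consequence of the Federer--Fleming compactness theorem for integral currents, together with Allard's compactness theorem for integral varifolds with locally $L^2$-bounded generalized mean curvature, followed by a verification that Brakke's variational inequality is preserved in the limit. Everything is local: I would fix a compact region $\Omega$ of the ambient space, produce a subsequential limit of the restrictions $T_k(t)\llcorner\Omega$, and then run a diagonal argument over an exhaustion of the ambient space to get the global limit. This localization is forced in the intended application, where the total mass of the rescaled currents blows up but the mass in any fixed region stays bounded.

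First I would record the uniform mass bounds. Since a Brakke flow is area non-increasing (and, in the situations of interest, obeys Huisken's monotonicity formula), the masses $\mathbf{M}(T_k(t)\llcorner\Omega)$ are bounded uniformly in $k$ and in $t\in[\alpha,\omega]$ in terms of the mass of $T_k(\alpha)$ on a slightly larger region. Integrating Brakke's inequality against a fixed cutoff equal to $1$ on $\Omega$ over the whole interval then yields a uniform bound $\int_\alpha^\omega\int|\vec{H}_k|^2\,d\mu_t^k\,dt\le C(\Omega)$, where $\mu_t^k$ is the mass measure of $T_k(t)$. Viewing $d\mu_t^k\,dt$ as a Radon measure on spacetime, these have locally uniformly bounded mass, so a subsequence converges weak-$\ast$ to a limit Radon measure; disintegrating in $t$ and running a further diagonal subsequence over a countable dense set of times, one extracts for a.e.\ $t$ a varifold limit of the $T_k(t)$, which is then defined at every $t$ by lower semicontinuity of mass. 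The Federer--Fleming theorem ensures the limits $T_\infty(t)$ are again integral currents with the expected boundary behaviour, and the uniform $L^2$ bound together with Allard's theorem ensures that for a.e.\ $t$ the associated varifold is integral with $L^2$ generalized mean curvature.

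The substantive step is to check that $(T_\infty(t))$ is itself a Brakke flow, i.e.\ that it satisfies Brakke's inequality, which for a fixed nonnegative test function $\phi$ reads schematically $\overline{D}_t\,\mu_t(\phi)\le\int(\vec{H}\cdot\nabla\phi-\phi|\vec{H}|^2)\,d\mu_t$. Three points need care: (i) the generalized mean curvatures $\vec{H}_k$ converge, weakly in the relevant $L^2(\mu)$ sense, to the mean curvature of the limit, which follows by passing to the limit in the first-variation identity $\delta V_k(X)=-\int X\cdot\vec{H}_k\,d\mu_t^k$; (ii) the curvature penalty $\int\phi|\vec{H}|^2\,d\mu_t$ is lower semicontinuous along this convergence, which is just weak lower semicontinuity of the $L^2$ norm; and (iii) the left-hand side, being an upper time-derivative $\overline{D}_t$, does not obviously survive a weak limit, and Brakke's inequality points in the ``wrong'' direction for naive semicontinuity.

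I expect (iii) to be the main obstacle, and I would handle it as Brakke does by passing through the time-integrated form: using the uniform inequality for each $T_k$ together with the a.e.-$t$ convergence $\mu_t^k(\phi)\to\mu_t^\infty(\phi)$, one deduces $\mu_t^\infty(\phi)-\mu_s^\infty(\phi)\le\int_s^t\int(\vec{H}\cdot\nabla\phi-\phi|\vec{H}|^2)\,d\mu_\tau^\infty\,d\tau$ for a.e.\ $s<t$, and then recovers the differential inequality. Crucially, one must allow $\mu^\infty$ to drop discontinuously in time; sudden loss of mass is admitted by the definition of a Brakke flow precisely so that this step goes through, and the defect in semicontinuity is absorbed into such jumps. (In Ilmanen's streamlined treatment \cite{ilmanen94} this is packaged cleanly; it is ultimately Brakke's original argument in \cite{brakke78}.)
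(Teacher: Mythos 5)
The paper does not prove this theorem; it simply cites Brakke \cite{brakke78} (with a nod to \cite{ilmanen94}) and remarks that it ``follows from the compactness theorem for integral currents of Federer--Fleming.'' There is therefore no in-paper proof to compare against.

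That said, your reconstruction is essentially the standard Brakke/Ilmanen argument and is sound in outline. You correctly identify the three ingredients the paper alludes to: Federer--Fleming compactness to produce an integral current limit, a uniform spacetime $L^2$ bound on $\vec{H}_k$ extracted by integrating Brakke's inequality against a cutoff, and lower semicontinuity together with the time-integrated form of the inequality (with mass allowed to drop) to verify the limit is again a Brakke flow. Two small points worth flagging. First, the theorem as stated in the paper suppresses the hypothesis of uniform local mass bounds on $T_k(\alpha)$ (or equivalently on the flows at some fixed time), which is genuinely needed for both the Federer--Fleming and Allard steps; you implicitly supply it when you write ``in terms of the mass of $T_k(\alpha)$ on a slightly larger region,'' but it should be stated as an assumption rather than derived, since a bare ``sequence of Brakke flows on $[\alpha,\omega]$'' carries no such bound. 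Second, Brakke's compactness conclusion is really at the level of varifolds (and the variational inequality is formulated varifold-wise); the current structure of the limit requires separate bookkeeping of orientations and boundaries, and the limiting $t\mapsto T_\infty(t)$ need only be a Brakke flow in the varifold sense. Neither of these affects the use the paper makes of the theorem, and both are standard, but a fully rigorous write-up would address them.
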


Given a compact mean curvature flow $F_t:M\times[0,T)\rightarrow\reals^{m+n}$, there is some point $x_0\in \reals^{m+n}$ such that $\lim_{t\rightarrow T} F(p,t)=x_0$ for some $p\in M$ with $\lim_{t\rightarrow T}\left|\II(p,t)\right|=\infty$. We say that the singularity of the flow occurs at $x_0$. If $t_j\nearrow T$ and $Q_j=\sup_{M\times[0,t_j]}\left|\II\right|$, we define
	\begin{equation}
		\overline{F}_j(p,s)=Q_j^2\left[F\left(p,T+\frac{s}{Q_j^2}\right)-x_0\right]
	\end{equation}
and call a subsequential Brakke flow limit of $\overline{F}_j$ a {\em tangent flow} with center $(x_0,T)$ of the original flow $F_t$.

	The primary advantage of using the tangent flow construction is that all Brakke flows which arise as tangent flows satisfy an elliptic equation called the {\em self-shrinker} equation.
	
\begin{definition}
	Given a mean curvature flow $M(t)$ and any $(x_0,t_0)\in\reals^ {m+n}\times \reals$, we define Huisken's monotonic quantity
		\begin{equation}
			\Theta_{M,x_0,t_0}(t)=\int_{M(t)}(4\pi (t_0-t))^{-\frac{m}{2}}e^{\frac{-|x-x_0|^2}{4(t_0-t)}}d\Haus^m
		\end{equation}
\end{definition}

\begin{theorem}[Huisken \cite{huisken90}]\label{huisken.monotonicity}
	Huisken's monotonic quantity is monotone along a smooth mean curvature flow. In particular it satisfies:
		\begin{equation*}
			\frac{d}{dt}\Theta_{M,x_0,t_0}(t)=-\int_{M(t)}\left|H+\frac{1}{2(t_0-t)}(x-x_0)^\perp\right|^2(4\pi (t_0-t))^{-\frac{m}{2}}e^{\frac{-|x-x_0|^2}{4(t_0-t)}}d\Haus^m
		\end{equation*}
	here $(x-x_0)^\perp$ is the projection of the vector $x-x_0$ to the normal bundle of $M$.
\end{theorem}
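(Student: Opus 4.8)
The plan is to establish the identity by differentiating $\Theta_{M,x_0,t_0}(t)$ under the integral sign and then rearranging; monotonicity is an immediate consequence. Differentiation under the integral is legitimate because $M(t)$ is compact and, for $t<t_0$, the weight $\rho(x,t)=(4\pi(t_0-t))^{-m/2}e^{-|x-x_0|^2/(4(t_0-t))}$ is smooth near $M(t)$: pulling the integrand back to $M$ along $F_t$ and writing $d\Haus^m=J_t\,d\mu_0$ for a fixed reference measure $\mu_0$ with smooth Jacobian $J_t$ exhibits $\Theta$ as a $C^1$ function of $t$ whose derivative commutes with the integral. Throughout write $\tau=t_0-t$, and let $\overline{\nabla}$ and $\operatorname{Hess}$ denote the gradient and Hessian on $\reals^{m+n}$.

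Two standard facts are needed. First, along $\partial_tF=H$ the first-variation formula for area in the normal direction $H$ gives $\tfrac{\partial}{\partial t}\,d\Haus^m=-|H|^2\,d\Haus^m$. Second, for a function $u$ on $\reals^{m+n}$ restricted to $M$, the Gauss formula gives $\Delta_M u=\tr_M(\operatorname{Hess}u)+\langle H,\overline{\nabla}u\rangle$. Combining the chain rule (with $\partial_tF=H$) and the first fact,
\[
\frac{d}{dt}\Theta_{M,x_0,t_0}(t)=\int_{M(t)}\Bigl(\partial_t\rho+\langle\overline{\nabla}\rho,H\rangle-|H|^2\rho\Bigr)\,d\Haus^m ,
\]
where $\partial_t\rho$ denotes the time derivative at fixed $x$.

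Next I would compute the derivatives of the explicit Gaussian: $\overline{\nabla}\rho=-\tfrac{1}{2\tau}(x-x_0)\rho$, $\operatorname{Hess}\rho=\bigl(\tfrac{1}{4\tau^2}(x-x_0)\otimes(x-x_0)-\tfrac{1}{2\tau}\Id\bigr)\rho$, and $\partial_t\rho=\bigl(\tfrac{m}{2\tau}-\tfrac{1}{4\tau^2}|x-x_0|^2\bigr)\rho$. Feeding $\operatorname{Hess}\rho$ into the second fact yields $\Delta_M\rho=\bigl(-\tfrac{m}{2\tau}+\tfrac{1}{4\tau^2}|(x-x_0)^{T}|^2\bigr)\rho+\langle H,\overline{\nabla}\rho\rangle$, so the dimensional term can be traded away: $\tfrac{m}{2\tau}\rho=\tfrac{1}{4\tau^2}|(x-x_0)^{T}|^2\rho+\langle H,\overline{\nabla}\rho\rangle-\Delta_M\rho$. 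Substituting this into $\partial_t\rho$ and using $|(x-x_0)^{T}|^2-|x-x_0|^2=-|(x-x_0)^{\perp}|^2$, the integrand becomes
\[
-\Delta_M\rho-\tfrac{1}{4\tau^2}|(x-x_0)^{\perp}|^2\rho+2\langle H,\overline{\nabla}\rho\rangle-|H|^2\rho .
\]

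Finally, $M(t)$ is closed, so $\int_{M(t)}\Delta_M\rho\,d\Haus^m=0$. Writing $\langle H,\overline{\nabla}\rho\rangle=-\tfrac{1}{2\tau}\langle H,(x-x_0)^{\perp}\rangle\rho$ (using that $H$ is normal), the remaining three terms complete the square, giving
\[
\frac{d}{dt}\Theta_{M,x_0,t_0}(t)=-\int_{M(t)}\Bigl|H+\tfrac{1}{2(t_0-t)}(x-x_0)^{\perp}\Bigr|^2\rho\,d\Haus^m ,
\]
which is the claimed formula, and monotonicity follows since the right-hand side is $\le0$. I expect the only genuine obstacle to be bookkeeping: keeping the sign conventions for $H$, $\II$, and the first variation mutually consistent, and splitting $x-x_0$ into its tangential and normal parts so that $\partial_t\rho$, the term $\Delta_M\rho$, and the evolving area element conspire into the perfect square. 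The one conceptual step is the elimination of $\tfrac{m}{2\tau}$ in favour of $\Delta_M\rho$ plus curvature terms.
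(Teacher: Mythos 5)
The paper does not prove this theorem; it cites it directly to Huisken's 1990 paper, so there is no in-text proof to compare against. Your argument is correct and is essentially Huisken's original proof: differentiate under the integral, trade the dimensional term $\frac{m}{2\tau}\rho$ for $\Delta_M\rho$ plus curvature terms via the identity $\Delta_M u=\tr_M(\operatorname{Hess}u)+\langle H,\overline{\nabla}u\rangle$, drop $\int_{M(t)}\Delta_M\rho\,d\Haus^m$ by closedness, and complete the square in $H$ and $\frac{1}{2\tau}(x-x_0)^\perp$; your sign conventions for $H$, the first variation of area, and the chain rule term $\langle\overline{\nabla}\rho,H\rangle$ are mutually consistent, and the cancellation $|(x-x_0)^T|^2-|x-x_0|^2=-|(x-x_0)^\perp|^2$ is exactly the step that produces the normal projection in the final formula.
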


Flows for which $\Theta$ is constant are called {\em self-shrinking}. In fact the mean curvature flow with intial data satisfying the elliptic equation
	\begin{equation}\label{self.shrinker.eqn}
		H=\alpha x^\perp
	\end{equation}
	for some $\alpha<0$ are necessarily self-shrinking; we call a submanifold, or more generally an integral current, satisfying (\ref{self.shrinker.eqn}) a {\em self-shrinker}.  We have the following theorem:
	
\begin{theorem}[Huisken \cite{huisken90}]\label{tangent.flow.SS}
	Any tangent flow to a mean curvature flow is a self-shrinking flow.
\end{theorem}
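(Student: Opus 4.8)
The strategy is to show that Huisken's density is constant along any tangent flow, which is exactly the self-shrinking condition; the key input is the monotonicity formula of Theorem~\ref{huisken.monotonicity} together with the scale-invariance of Huisken's quantity under the parabolic rescalings used to define the tangent flow. Recall that a tangent flow with center $(x_0,T)$ arises as a Brakke-flow subsequential limit of rescaled flows whose time-$s$ slice is the integral current $\overline{M}_j(s)=Q_j\big(M(T+s/Q_j^2)-x_0\big)$, with $Q_j\to\infty$. Since $\Theta_{M,x_0,t_0}$ is by construction dimensionless, it is invariant under parabolic dilation about its center, so that
\begin{equation*}
	\Theta_{\overline{M}_j,0,0}(s)=\Theta_{M,x_0,T}\Big(T+\frac{s}{Q_j^2}\Big)\qquad\text{for all }s<0\text{ and }j\text{ large.}
\end{equation*}

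First I would note that the limit $\Theta_0:=\lim_{t\to T^-}\Theta_{M,x_0,T}(t)$ exists and is finite: since $M(0)$ is a smooth compact submanifold, $\Theta_{M,x_0,T}(0)<\infty$; Theorem~\ref{huisken.monotonicity} makes $t\mapsto\Theta_{M,x_0,T}(t)$ non-increasing on $[0,T)$; and the quantity is manifestly nonnegative. Fixing any $s<0$ and letting $j\to\infty$ in the displayed identity then gives $\Theta_{\overline{M}_j,0,0}(s)\to\Theta_0$, because $T+s/Q_j^2\to T^-$. The next step is to transfer this to the tangent flow $T_\infty$: using the convergence $\overline{M}_j(s)\to T_\infty(s)$ of integral currents, the uniform local area bounds supplied by the monotonicity formula, and the rapid decay of the Gaussian weight (which makes far-away contributions uniformly small), one passes to the limit to conclude $\Theta_{T_\infty,0,0}(s)=\Theta_0$ for \emph{every} $s<0$. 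Thus $\Theta_{T_\infty,0,0}$ is constant, which is precisely the statement that $T_\infty$ is self-shrinking. To recover the pointwise self-shrinker equation (\ref{self.shrinker.eqn}), one applies the monotonicity formula in its Brakke-flow form to $T_\infty$: vanishing of $\frac{d}{ds}\Theta_{T_\infty,0,0}$ forces the integrand $\big|H+\frac{1}{2(-s)}x^\perp\big|^2$ to vanish $\Haus^m$-a.e.\ for a.e.\ $s<0$, so evaluating at $s=-1$ yields $H=-\tfrac12 x^\perp$, i.e.\ (\ref{self.shrinker.eqn}) with $\alpha=-\tfrac12<0$.

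I expect the main obstacle to be the rigorous passage to the limit of Huisken's quantity under weak convergence of Brakke flows: one needs continuity --- not merely lower semicontinuity --- of the Gaussian-weighted mass, which near the center is controlled by the uniform local area bounds the monotonicity formula itself provides, and at infinity by the exponential Gaussian decay against the locally bounded areas. A secondary technical point is that Theorem~\ref{huisken.monotonicity} and the self-shrinker characterization must be used in their Brakke-flow versions rather than the smooth ones stated above; these are by now standard in the measure-theoretic treatment of the mean curvature flow (cf.\ \cite{ilmanen94}), and I would invoke them as black boxes.
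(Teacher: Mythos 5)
The paper does not prove Theorem~\ref{tangent.flow.SS} itself; it cites it from Huisken and instead proves the analogous statement for the smooth blow-up in Proposition~\ref{SBU.SS}, whose proof the paper says is ``the same.'' Your proposal reproduces the standard Huisken--Ilmanen argument, so it is the right method, but you arrange the steps in a slightly different order than the paper. You first argue that $\Theta_{\overline M_j,0,0}(s)\to\Theta_0$ via scale-invariance and monotone convergence, then pass to the limit to obtain constancy of $\Theta_{T_\infty,0,0}$, and only then differentiate to extract the pointwise self-shrinker equation. The paper's Proposition~\ref{SBU.SS} proof never establishes constancy of $\Theta$ along the limit flow at all: it rescales the \emph{spacetime integral} of the defect $\bigl|H+\tfrac{(x-x_0)^\perp}{2(T-t)}\bigr|^2$ over a time interval $[a,b]$ and a compact spatial set $K$, bounds it by a difference $\Theta_{M,x_0,T}(t_j+\tfrac{a}{Q_j^2})-\Theta_{M,x_0,T}(t_j+\tfrac{b}{Q_j^2})$ that goes to zero, and passes that integral to the limit directly. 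The payoff of the paper's ordering is that it sidesteps exactly the technical obstacle you flag --- namely proving genuine \emph{continuity}, not just lower semicontinuity, of the Gaussian-weighted mass under Brakke-flow convergence --- because it only needs convergence (or even just Fatou-type lower semicontinuity) of the localized spacetime integral of a nonnegative integrand, which is weaker.

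On the substance of that obstacle: your outline is not wrong, but the assertion $\Theta_{T_\infty,0,0}(s)=\Theta_0$ for \emph{every} $s<0$ needs care. Brakke flows can lose mass instantaneously, and the current/varifold convergence in Brakke's compactness theorem is only guaranteed for a.e.\ time, so a priori you get equality for a.e.\ $s$, and then you must invoke one-sided continuity of $\Theta$ along the Brakke flow (or upper semicontinuity in $s$ plus monotonicity) to upgrade to all $s$. You gesture at this by invoking uniform local area bounds from monotonicity and Gaussian decay at infinity, which is correct, but you should say explicitly that the a.e.-in-$s$ statement suffices to conclude $H+\frac{1}{2(-s)}x^\perp=0$ $\Haus^m$-a.e.\ for a.e.\ $s$, which is already the self-shrinking condition. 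If you want the pointwise identity at a single fixed time such as $s=-1$, that requires an additional regularity input and is not needed for the theorem as stated. With that caveat your proof is correct and, up to the reordering of monotonicity-then-limit versus limit-then-monotonicity, is the same argument the paper relies on.
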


The self-shrinking condition imposes fairly strong restrictions, as in the following theorem:

\begin{theorem}[Huisken \cite{huisken93}]
	A smooth mean-convex self-shrinking hypersurface must be one of the following:\begin{itemize}
		\item a round sphere
		\item a round cylinder
		\item $\Gamma\times\reals^{m-1}$, where $\Gamma$ is one of the Abresch-Langer curves \cite{abresch&langer86}
	\end{itemize}
\end{theorem}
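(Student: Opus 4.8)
This is Huisken's rigidity classification of self-shrinkers with positive mean curvature \cite{huisken93}; the compact case already appears in \cite{huisken90}. I outline the argument. Normalize the self-shrinker equation \eqref{self.shrinker.eqn} so that $\vec H=-\tfrac12\,x^\perp$, equivalently $H=-\tfrac12\langle x,\nu\rangle$ for the normal $\nu$ with $\vec H=H\nu$; then mean-convexity is the condition $H\geq 0$, and $M$ is a critical point of the Gaussian-weighted area $\int_M e^{-|x|^2/4}\,d\Haus^m$. The relevant elliptic operator is the drift (Ornstein--Uhlenbeck) Laplacian $\mathcal{L}u=\Delta u-\tfrac12\langle x,\nabla u\rangle$, which is formally self-adjoint with respect to the weight $e^{-|x|^2/4}$. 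Differentiating the self-shrinker equation and feeding it into Simons' identity (the ambient curvature terms drop out since $N=\reals^{m+1}$) gives $\mathcal{L}H=H-|\II|^2H$; by the strong maximum principle (Hopf's lemma, applied locally), either $H\equiv 0$, in which case $\langle x,\nu\rangle\equiv 0$ exhibits $M$ as a smooth minimal cone and hence a hyperplane --- the degenerate case excluded by strict mean-convexity --- or else $H>0$ everywhere, which I assume henceforth.

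The core of the argument is the scale-invariant function $w=|\II|^2/H^2$, which is now smooth on $M$ and satisfies $w\geq 1/m$ by Cauchy--Schwarz. Combining $\mathcal{L}H=H-|\II|^2H$ with the Simons-type identity $\mathcal{L}|\II|^2=2|\II|^2-2|\II|^4+2|\nabla\II|^2$ and simplifying, one obtains the divergence identity
\[
\mathcal{L}w+\bigl\langle\nabla\log H^2,\ \nabla w\bigr\rangle\ =\ \frac{2}{H^4}\,\bigl|\,H\nabla\II-\nabla H\otimes\II\,\bigr|^2\ \geq\ 0 .
\]
The left-hand side equals $\tfrac{1}{H^2e^{-|x|^2/4}}\operatorname{div}\!\bigl(H^2e^{-|x|^2/4}\nabla w\bigr)$, so $w$ is subharmonic for the measure $H^2e^{-|x|^2/4}\,d\Haus^m$. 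If $M$ is compact, integrating the right-hand side against this measure gives zero, forcing $H\nabla\II\equiv\nabla H\otimes\II$; if $M$ is non-compact one multiplies by $\phi^2$ for a compactly supported cutoff $\phi$, integrates by parts, and lets $\phi$ exhaust $M$, the error terms vanishing by the bounded-geometry hypothesis together with the rapid decay of $e^{-|x|^2/4}$ against the at-most-Euclidean volume growth of self-shrinkers. In either case $H\nabla\II=\nabla H\otimes\II$, i.e. the symmetric $2$-tensor $\II/H$ is parallel; in particular it has constant eigenvalues and parallel eigendistributions.

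By the de~Rham splitting theorem (applied on the universal cover, then descending), $M$ is isometrically a product $\Sigma^p\times\reals^{m-p}$, where the Euclidean factor corresponds to the zero eigenvalue of $\II/H$ and $\Sigma^p$ is a complete self-shrinking hypersurface in $\reals^{p+1}$ with $\II_\Sigma/H_\Sigma$ parallel and nowhere zero. Since $\Sigma$ is a hypersurface its non-flat part cannot be split further (a product of two non-flat self-shrinkers would have codimension at least two in the ambient Euclidean space), so for $p\geq 2$ the shrinker $\Sigma$ is totally umbilic, hence the round sphere $S^p(\sqrt{2p})$; for $p=1$ it is a complete self-shrinking plane curve with geodesic curvature everywhere positive, which by the Abresch--Langer classification \cite{abresch&langer86} is either the round circle or one of their closed, locally convex curves. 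Reassembling the factors, $M$ is a round sphere, a round cylinder $S^k(\sqrt{2k})\times\reals^{m-k}$, or $\Gamma\times\reals^{m-1}$ with $\Gamma$ an Abresch--Langer curve, as claimed.

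The main obstacle is the second step --- establishing that $w$ is constant. The clean divergence identity does this, but deriving it requires the precise Simons computation, and in the non-compact setting justifying the integration by parts needs genuine a~priori control: one must know that $H$ does not degenerate to zero (so that $w$ and $\nabla w$ remain bounded) and that the weighted volume and Dirichlet energy of $w$ are finite, which is where the bounded-geometry and volume-growth properties of self-shrinkers enter. The splitting step also requires some care in treating the curve factor and in excluding further refinement of the non-flat part.
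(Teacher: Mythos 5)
The paper does not prove this statement; it is quoted, with attribution, from Huisken's paper \cite{huisken93} as background for the discussion of tangent flows. There is therefore no proof in the paper to compare against. Your sketch reproduces the standard structure of Huisken's argument, and the first part --- the drift-Laplacian equation $\mathcal{L}H=(1-|\II|^2)H$, the strong maximum principle dichotomy $H\equiv 0$ versus $H>0$, the divergence identity for $w=|\II|^2/H^2$, and the conclusion $\nabla(\II/H)=0$ --- is in order, modulo the caveat you yourself raise: the statement as quoted omits the auxiliary hypotheses (bounded $|\II|$ and polynomial volume growth in \cite{huisken93}, or properness) without which the integration by parts in the noncompact case is not justified.

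The final classification step, however, is not right as written. You deduce that the non-flat factor $\Sigma^p$ (for $p\geq 2$) is totally umbilic on the grounds that ``a product of two non-flat self-shrinkers would have codimension at least two in the ambient Euclidean space.'' That is a non-sequitur: de~Rham gives an \emph{intrinsic} Riemannian product, and an intrinsic product need not be realized by a product of immersions (a flat torus in $\reals^3$ is the cautionary example), so no codimension count applies. The correct reasoning runs through the extrinsic structure. One route is to substitute $\nabla_i h_{jk}=\frac{\nabla_iH}{H}h_{jk}$ into the Codazzi identity $\nabla_ih_{jk}=\nabla_jh_{ik}$ and test in an eigenbasis: this gives $(\nabla_iH)\lambda_j=0$ for $i\neq j$, so either $\II$ has rank one (the $\Gamma\times\reals^{m-1}$ case) or $\nabla H\equiv 0$, and in the latter case $|\II|^2$ is also constant and one must still show there is a single nonzero eigenvalue. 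The cleanest way to finish is via the Gauss equation: after splitting off the kernel, the mixed sectional curvatures between two nonzero-eigenvalue directions are $R_{ijij}=\lambda_i\lambda_j\neq 0$ in the eigenbasis, contradicting the de~Rham splitting along those eigendistributions; hence all nonzero eigenvalues coincide and $\Sigma$ is umbilic. Either version completes the argument; the one you wrote skips the actual mechanism and replaces it with a claim that is false in general.
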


Huisken \cite{huisken90} showed that in the type I case, the tangent flow construction in fact yields a smooth limit. We now show that this construction is the same as the smooth blow-up.
\begin{proposition}\label{SBU.SS}
	Suppose that $F_t:M\times[0,T)\rightarrow\reals^{m+n}$ is a compact mean curvature flow with type I singularity at $T$. Then the smooth blow-up of $F_t$ is a self-shrinking flow.
\end{proposition}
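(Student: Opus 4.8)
The plan is to follow Huisken's original type I argument \cite{huisken90}: show that the smooth blow-up carries a limiting Gaussian density that is forced to be constant in time, which is exactly the definition of a self-shrinking flow (and, fed back into the equality case of Theorem \ref{huisken.monotonicity}, even produces the elliptic equation \eqref{self.shrinker.eqn}).

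First I would pin down the blow-up data. Since the singularity is of type I there is a constant $C_I$ with $\sup_{M\times[0,t]}|\II|^2\le C_I/(T-t)$, so $Q_j^2(T-t_j)\le C_I$; conversely the lower rate bound \eqref{minrate} gives $Q_j^2\ge\max_M|\II(\cdot,t_j)|^2\ge C^2/(T-t_j)$, hence $Q_j^2(T-t_j)\ge C^2>0$. Passing to a subsequence, $\tau_j:=Q_j^2(T-t_j)\to\tau_0\in[C^2,C_I]$. One also checks that $\overline{x}_0:=\lim_{t\to T}F(\overline{p},t)$ exists, because $\int_{t_j}^T|H(\overline{p},t)|\,dt\le\sqrt{m}\int_{t_j}^T\sqrt{C_I}/\sqrt{T-t}\,dt=2\sqrt{mC_I}\sqrt{T-t_j}$ is finite, and that the recentered points $\xi_j:=Q_j(\overline{x}_0-F(\overline{p},t_j))$ satisfy $|\xi_j|\le 2\sqrt{mC_I}\sqrt{\tau_j}$, so after a further subsequence $\xi_j\to\xi_0$. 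These facts say precisely that the parabolic rescalings $\tilde{F}_j$ underlying the smooth blow-up are ``centered at the singularity.''

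Next I would transplant Huisken's monotonicity. Let $\Theta(t):=\Theta_{M,\overline{x}_0,T}(t)$. By Theorem \ref{huisken.monotonicity} this is nonincreasing on $[0,T)$ and bounded below by $0$, so $\Theta_\infty:=\lim_{t\to T^-}\Theta(t)$ exists, and $\Theta_\infty\le\Theta(0)<\infty$ since $M$ is compact. A direct change of variables shows that $\Theta$ is invariant under the rescaling defining $\tilde{F}_j$: writing $\tilde{M}_j(s)\subset\reals^{m+n}$ for the image of $\tilde{F}_j(s)$,
\begin{equation*}
	\Theta_{\tilde{M}_j,\xi_j,\tau_j}(s)=\Theta\!\left(t_j+\tfrac{s}{Q_j^2}\right)\longrightarrow\Theta_\infty\qquad(j\to\infty),
\end{equation*}
because $t_j+s/Q_j^2\to T$. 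On the other hand, by Theorem \ref{SBU} and Theorem \ref{geometric.convergence.MCF} the $\tilde{F}_j$ converge smoothly on compact space-time sets to $F_\infty$, with $\xi_j\to\xi_0$ and $\tau_j\to\tau_0>0$. Granting that one may pass to the limit inside the integral defining $\Theta$, this yields $\Theta_{F_\infty,\xi_0,\tau_0}(s)=\Theta_\infty$ for all $s$; that is, Huisken's quantity of $F_\infty$ about $(\xi_0,\tau_0)$ is constant, so $F_\infty$ is self-shrinking. Feeding constancy back into the equality case of Theorem \ref{huisken.monotonicity} gives $H_\infty+\frac{1}{2(\tau_0-s)}(x-\xi_0)^\perp\equiv 0$, which after translating $x\mapsto x-\xi_0$ and shifting $s\mapsto s-\tau_0$ is exactly \eqref{self.shrinker.eqn}.

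The hard part will be justifying the interchange of limit and integral, i.e. controlling the contribution of the noncompact ends of $\tilde{M}_j(s)$ to $\Theta$. On compact subsets of $\reals^{m+n}$ the convergence is smooth and the Gaussian weights converge uniformly, so it suffices to show that $\int_{\tilde{M}_j(s)\setminus B_R(\xi_j)}(4\pi(\tau_j-s))^{-m/2}e^{-|x-\xi_j|^2/4(\tau_j-s)}\,d\Haus^m\to 0$ as $R\to\infty$, uniformly in $j$. This is precisely where the type I hypothesis is used: the uniform bound $\Theta_{\tilde{M}_j,\xi_j,\tau_j}(s)=\Theta(t_j+s/Q_j^2)\le\Theta(0)$, together with the two-sided control on $\tau_j$ from the first step, gives uniform Euclidean area-ratio bounds for the $\tilde{M}_j(s)$ on balls of bounded radius --- the usual consequence of the monotonicity formula --- and a polynomial area bound then defeats the Gaussian decay. (For a type II singularity $\tau_j\to\infty$, the rescaling is not parabolic, this estimate fails, and indeed type II blow-ups need not be shrinkers, consistent with our stating the proposition only in the type I case.) I would also note the minor discrepancy that our smooth blow-up is centered at the moving maximum-curvature points $(\overline{p},t_j)$ rather than at $(\overline{x}_0,T)$ as in \cite{huisken90}; the estimates of the first step show the two constructions differ only by the bounded translation $\xi_j$ and the time normalization $\tau_j$, so the conclusion is unaffected.
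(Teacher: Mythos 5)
Your proposal is essentially sound, but it follows a different route through Huisken's monotonicity than the paper does, and the difference is not just cosmetic.

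You aim to show that the limiting Gaussian density $\Theta_{F_\infty,\xi_0,\tau_0}(s)$ equals the constant $\Theta_\infty=\lim_{t\to T^-}\Theta_{M,\overline{x}_0,T}(t)$ for every $s$, and then extract the self-shrinker equation from the equality case of Theorem \ref{huisken.monotonicity}. This forces you to pass a limit through the \emph{full, noncompact} Gaussian integral, which is exactly the ``hard part'' you flag: you need the tail $\int_{\tilde{M}_j(s)\setminus B_R}$ to be small uniformly in $j$, and you propose deducing this from the uniform bound $\Theta_{\tilde{M}_j,\xi_j,\tau_j}(s)\le\Theta(0)$ together with the two-sided control $0<C^2\le\tau_j\le C_I$ via an area-ratio estimate. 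That chain of reasoning is standard and correct (it is the usual ``bounded Gaussian density implies polynomial area growth'' consequence of monotonicity), but it is an extra lemma you have to import and verify, and without it your argument is incomplete.

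The paper's proof sidesteps the noncompactness issue entirely. Rather than working with $\Theta$ itself, it works with the \emph{nonnegative integrand} $\left|H+\frac{(x-\overline{x})^\perp}{2(C-s)}\right|^2\rho$ appearing in the derivative formula of Theorem \ref{huisken.monotonicity}, integrated only over a compact space-time window $[a,b]\times K$. The rescaling identity (\ref{integrate.dt}) converts this to an integral over a shrinking region of the original flow, which is dominated (by nonnegativity) by the full integral, which by Theorem \ref{huisken.monotonicity} equals the difference $\Theta_{M,x_0,T}(t_j+a/Q_j^2)-\Theta_{M,x_0,T}(t_j+b/Q_j^2)$. Because $\Theta_{M,x_0,T}$ is monotone and bounded it converges as $t\to T^-$, so this difference tends to $0$; on compact space-time regions the smooth geometric convergence from Theorem \ref{geometric.convergence.MCF} is all that is needed to pass the cut-off integral to the limit. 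Since the limit integrand is nonnegative and integrates to zero over every compact window, it vanishes almost everywhere, giving the self-shrinker equation directly. This is a genuinely cleaner route: it never asks for convergence of $\Theta$ of the rescaled flows, only for convergence of a compactly supported integral, so no area-ratio estimate and no tail control are ever needed. Your preparatory estimates (the two-sided bound on $\tau_j$, the estimate $|x_j-x_0|\le C'/Q_j$, and the bounded recentering $\xi_j$) are the same ones the paper uses and are correct; the divergence is entirely in how the monotonicity formula is deployed.
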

\begin{proof}
	The proof is the same as the proof of Theorem \ref{tangent.flow.SS}, with the necessary changes enabled by the type I assumption.
	
	Given the central sequence $\{(\overline{p},t_j)\}$, set $x_j=F(\overline{p},t_j)$. Then there is a subsequential limit $x_0=\lim_j x_j$. We compute:\begin{equation}\begin{aligned}
				|x_j-x_0|=&\left|\int_{t_j}^TH(\overline{p},s)ds\right|\\
				\ \leq& \ \int_{t_j}^T|H(\overline{p},s)|ds\\
				\ \leq& \ \int_{t_j}^TC(T-s)^{-\frac{1}{2}}ds\\
				=&C(T-t_j)^\frac{1}{2}\leq\frac{C'}{Q_j}\end{aligned}
			\end{equation}
		Thus $\{Q_j(x_0-x_j)\}$ is a bounded sequence, so that again passing to a subsequence, we have some $\overline{x}=\lim_jQ_j(x_0-x_j)$.
		
		Set $\alpha_j=Q_j^2(T-t_j)$.  Then each $M_j$ exists on $(-Q_j^2t_j,\alpha_j)$.  By the type I assumption, we can pass to a subsequence so that the limit $\lim_j\alpha_j=C$ exists. We consider Huisken's monotonic quantity centered at $(\overline{x},C)$:
			\begin{equation}
				\Theta_{M_\infty,\overline{x},C}(s)=\int_{M_\infty(s)}(4\pi (C-s))^{-\frac{m}{2}}e^{-\frac{|x-\overline{x}|^2}{4(C-s)}}d\Haus^m
			\end{equation}
	
	Given any $-Q_j^2t_j<a<b<\alpha_j$ and a compact set $K\subset\reals^{m+n}$, we have by the scaling properties of Huisken's quantity:
		\begin{multline}\label{integrate.dt}
			\int_a^b\int_{M_j(s)\cap K}\left|H+\frac{(x-Q_jx_0)^\perp}{2(\alpha_j-s)}\right|^2(4\pi (\alpha_j-s))^{-\frac{m}{2}}e^{-\frac{|x-Q_jx_0|^2}{4(\alpha_j-s)}}d\Haus^mds\\
			=\int_{t_j+\frac{a}{Q_j^2}}^{t_j+\frac{b}{Q_j^2}}\int_{M(t)\cap (Q_j^{-1}K+x_j)}\left|H+\frac{(x-x_0)^\perp}{2(T-t)}\right|^2(4\pi (T-t))^{-\frac{m}{2}}e^{-\frac{|x-x_0|^2}{4(T-t)}}d\Haus^m dt
			\end{multline}
			
	We can estimate the right-hand side of (\ref{integrate.dt}) by integrating over all of $M(t)$ and applying Theorem \ref{huisken.monotonicity}:
	\begin{multline}\label{monotonicity.comp}
		\int_{t_j+\frac{a}{Q_j^2}}^{t_j+\frac{b}{Q_j^2}}\int_{M(t)\cap (Q_j^{-1}K+x_j)}\left|H+\frac{(x-x_0)^\perp}{2(T-t)}\right|^2(4\pi (T-t))^{-\frac{m}{2}}e^{-\frac{|x-x_0|^2}{4(T-t)}}d\Haus^m dt\\
		\begin{aligned}\ & \leq\ \int_{t_j+\frac{a}{Q_j^2}}^{t_j+\frac{b}{Q_j^2}}\int_{M(t)}\left|H+\frac{(x-x_0)^\perp}{2(T-t)}\right|^2(4\pi (T-t))^{-\frac{m}{2}}e^{-\frac{|x-x_0|^2}{4(T-t)}}d\Haus^m dt\\
		&=\Theta_{M,x_0,T}(t_j+\frac{a}{Q_j^2})-\Theta_{M,x_0,T}(t_j+\frac{b}{Q_j^2})\end{aligned}\end{multline}
	Since $t_j+\frac{a}{Q_j^2}$ and $t_j+\frac{b}{Q_j^2}$ both approach $T$ as $j\rightarrow\infty$, we have by Theorem \ref{huisken.monotonicity} that the right-hand side of (\ref{monotonicity.comp}) goes to 0 as $j\rightarrow\infty$.
	
	On the other hand, the left-hand side of (\ref{integrate.dt}) approaches
	\begin{equation}
		\int_a^b\int_{M_\infty(s)\cap K}\left|H+\frac{(x-\overline{x})^\perp}{2(C-s)}\right|^2(4\pi (C-s))^{-\frac{m}{2}}e^{-\frac{|x-\overline{x}|^2}{4(C-s)}}d\Haus^mds
	\end{equation}

	Since $K$, $a$, and $b$ were arbitrary we have that for almost every $s$ and almost every $x\in M_\infty(s)$ that $\left|H+\frac{(x-\overline{x})^\perp}{2(C-s)}\right|^2=0$. Thus $M_\infty$ is a self-shrinking flow with center $(\overline{x},C)$.
\end{proof}

We therefore have the following characterization of singularity types in case $(N,h)=(\reals^{m+n},\dx^2)$.

\begin{corollary}
	The singularity of a compact mean curvature flow $F_t:M\rightarrow\reals^{m+n}$ is of type I if and only if it admits a smooth blow-up which becomes extinct in finite time.
\end{corollary}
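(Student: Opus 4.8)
The plan is to obtain both implications directly from Theorem \ref{SBU} and Proposition \ref{SBU.SS}; the only real content is to locate the maximal time of existence of the smooth blow-up and to see that its finiteness is equivalent to the singularity being of type I.

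For the forward implication, suppose the singularity at $T$ is of type I. By Theorem \ref{SBU} (as sharpened in the proof of Proposition \ref{SBU.SS}) the smooth blow-up is a mean curvature flow $F_\infty:M_\infty\times(-\infty,C)\to\reals^{m+n}$ with $C=\lim_j Q_j^2(T-t_j)$, after passing to a subsequence. First I would note $0<C<\infty$: the type I hypothesis $\sup_{M\times[0,T)}|\II|^2(T-t)<\infty$ bounds $Q_j^2(T-t_j)=|\II(p_j,t_j)|^2(T-t_j)$ from above, while the lower rate estimate (\ref{minrate}) gives $Q_j^2(T-t_j)\ge C_0^2>0$. Next, Proposition \ref{SBU.SS} identifies $F_\infty$ as a self-shrinking flow with center $(\overline x,C)$, so each slice satisfies $H=-\tfrac{1}{2(C-s)}(x-\overline x)^\perp$; since $|\II_\infty(p_\infty,0)|=1$ (Theorem \ref{SBU}), $F_\infty$ is not a static plane and therefore contracts homothetically toward $\overline x$, with $|\II_\infty(\cdot,s)|\to\infty$ as $s\nearrow C$. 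Hence $F_\infty$ cannot be continued past the finite time $C$: it becomes extinct in finite time.

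For the converse I would prove the contrapositive. If the singularity at $T$ is of type II, then by Theorem \ref{SBU} the smooth blow-up is an eternal flow $F_\infty:M_\infty\times(-\infty,\infty)\to\reals^{m+n}$, which is defined for all positive times and so does not become extinct in finite time. Since the smooth blow-up of a type II singularity is, by construction --- the choice (\ref{typeIIpicking}) of central sequence and rescalings --- always of this eternal form, a compact flow whose smooth blow-up becomes extinct in finite time cannot have a type II singularity; that is, its singularity is of type I. The two implications together give the corollary.

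The step I expect to require the most care is justifying that the type I blow-up \emph{genuinely} becomes extinct: a priori the interval $(-\infty,C)$ is only an artifact of the construction, since the rescaled flows $\tilde F_j$ are defined only until the underlying flow reaches its singular time $T$, and this by itself does not preclude a smooth extension of the limit. It is the self-shrinker identity of Proposition \ref{SBU.SS}, together with the normalization $|\II_\infty(p_\infty,0)|=1$ ruling out the static case, that converts the construction-imposed endpoint $C$ into an honest finite-time singularity of $F_\infty$, and it is the two-sided control on $Q_j^2(T-t_j)$ from (\ref{minrate}) and the type I hypothesis that keeps $C$ finite and positive.
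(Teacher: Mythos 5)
Your proof is correct and supplies exactly the reasoning the paper leaves implicit (the corollary is stated without proof). Both directions are read off from Theorem \ref{SBU} together with Proposition \ref{SBU.SS}: type II gives the eternal blow-up $C=\infty$, type I gives $C<\infty$ with the self-shrinker identity forcing genuine curvature blow-up at $s=C$ so the flow cannot be continued.

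You are right to flag the extinction step as the only nontrivial point, and you handle it appropriately. Two small remarks. First, Theorem \ref{SBU} as stated puts $C=0$ for type I, whereas your $C=\lim_j Q_j^2(T-t_j)>0$ comes from the proof of Proposition \ref{SBU.SS}; these describe the same object (the limit is constructed on $(-\infty,0)$, then the self-shrinker structure extends it to the maximal interval $(-\infty,C)$), and you navigate this correctly by invoking Proposition \ref{SBU.SS}. Second, when you rule out the static case, the precise justification is via the homothetic form $M_\infty(s)=\sqrt{(C-s)/C}\,(M_\infty(0)-\overline x)+\overline x$, which gives $\sup|\II_\infty(\cdot,s)|=\sqrt{C/(C-s)}\,\sup|\II_\infty(\cdot,0)|\ge\sqrt{C/(C-s)}\to\infty$; the normalization $|\II_\infty(p_\infty,0)|=1$ makes this lower bound nonvacuous. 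Your appeal to ``not a static plane'' is the right intuition, and the scaling identity is the clean way to close it.
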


\section*{Acknowledgements}
	The author wishes to thank his adviser Jon Wolfson for his help and suggestions.  He also wishes to thank Thomas Parker and Natasa Sesum for several helpful discussions.

\bibliography{mcf}{}

\begin{thebibliography}{10}

\bibitem{abresch&langer86}
U.~Abresch and J.~Langer.
\newblock The normalized curve shortening flow and homothetic solutions.
\newblock {\em Journal of Differential Geometry}, 23(2):175--196, 1986.

\bibitem{brakke78}
Kenneth Brakke.
\newblock {\em The Motion of a Surface by its Mean Curvature}.
\newblock Number~20 in Mathematical Notes. Princeton University Press, 1978.

\bibitem{cheegerthesis}
Jeff Cheeger.
\newblock {\em Comparison and Finiteness Theorems for Riemannian Manifolds}.
\newblock PhD thesis, Princeton University, 1967.

\bibitem{chen&he08}
Jingyi Chen and Weiyong He.
\newblock A note on the singular time of mean curvature flow.
\newblock arxiv:math/0810.3883, 2008.

\bibitem{bensbook}
Bennett Chow, Peng Lu, and Lei Ni.
\newblock {\em Hamilton's Ricci Flow}.
\newblock Number~77 in Graduate Studies in Mathematics. AMS, 2006.

\bibitem{gromov81}
Mikhael Gromov.
\newblock {\em Structures M\'{e}triques pour les Variet\'{e}s Riemanniennes}.
\newblock Number~1 in Textes Mathematiques. Fernand-Nathan, 1981.

\bibitem{hamiltoncompactness}
Richard Hamilton.
\newblock A compactness property for solutions of the ricci flow.
\newblock {\em American Journal of Mathematics}, 117:545--572, 1995.

\bibitem{huisken90}
Gerhard Huisken.
\newblock Asymptotic behavior for singularities of the mean curvature flow.
\newblock {\em Journal of Differential Geometry}, 31:285--299, 1990.

\bibitem{huisken93}
Gerhard Huisken.
\newblock Local and global behaviour of hypersurfaces moving by mean curvature.
\newblock {\em Proceedings of Symposia in Pure Mathematics}, 54(1):175--191,
  1993.

\bibitem{huisken&sinestrari08}
Gerhard Huisken and Carlo Sinestrari.
\newblock Mean curvature flow with surgeries of two-convex hypersurfaces.
\newblock {\em Inventiones Mathematicae}, 2008.

\bibitem{ilmanen94}
Tom Ilmanen.
\newblock Elliptic regularization and partial regularity for motion by mean
  curvature.
\newblock {\em Memoirs of the American Mathematical Society}, 108(520), 1994.

\bibitem{langer85}
Joel Langer.
\newblock A compactness theorem for surfaces with $l_p$-bounded second
  fundamental form.
\newblock {\em Mathematische Annalen}, 270:223--234, 1985.

\bibitem{petersenriemannian}
Peter Petersen.
\newblock {\em Riemannian Geometry}.
\newblock Number 171 in Graduate Texts in Mathematics. Springer-Verlag, second
  edition, 2006.

\bibitem{reilly73}
Robert~C. Reilly.
\newblock On the hessian of a function and the curvatures of its graph.
\newblock {\em Michigan Math. J.}, 20:373--383, 1973.

\end{thebibliography}
\bibliographystyle{plain}

\end{document}